\documentclass[a4paper,abstracton]{scrartcl}
\pdfminorversion=5
\usepackage{amsfonts}
\usepackage{amssymb}
\usepackage{amsmath}
\usepackage{amsthm}

\usepackage[ruled,vlined,linesnumbered]{algorithm2e}

\usepackage{graphicx}
\usepackage{color}

\usepackage{subfigure}

\usepackage{a4wide}

\usepackage[]{todonotes}



\newtheorem{theorem}{Theorem}
\newtheorem{observation}[theorem]{Observation}
\newtheorem{lemma}[theorem]{Lemma}

\newtheorem{cor}[theorem]{Corollary}
\newtheorem{definition}[theorem]{Definition}


\usepackage{authblk}

\allowdisplaybreaks

\begin{document}

\newcommand{\X}{{\mathcal{X}}}
\newcommand{\cU}{{\mathcal{U}}}
\newcommand{\cI}{{\mathcal{I}}}
\newcommand{\cC}{{\mathcal{C}}}
\newcommand{\R}{\mathbb{R}}
\newcommand{\N}{\mathbb{N}}

\title{Two-Stage Robust Optimization Problems with Two-Stage Uncertainty}

\author[1]{Marc Goerigk\footnote{Corresponding author, email \texttt{marc.goerigk@uni-siegen.de}}}
\author[2,3]{Stefan Lendl}
\author[2]{Lasse Wulf}

\affil[1]{Network and Data Science Management, University of Siegen,\newline Unteres Schlo{\ss} 3, 57072 Siegen, Germany}	
\affil[2]{Institute of Discrete Mathematics, Graz University of Technology,\newline  Steyrergasse 30/II, 8010 Graz, Austria}
\affil[3]{Institute of Operations and Information Systems, University of Graz,\newline  Universit\"atsstra{\ss}e 15, 8010 Graz, Austria}

\date{}

\maketitle

\begin{abstract}
We consider two-stage robust optimization problems, which can be seen as games between a decision maker and an adversary. After the decision maker fixes part of the solution, the adversary chooses a scenario from a specified uncertainty set. Afterwards, the decision maker can react to this scenario by completing the partial first-stage solution to a full solution.

We extend this classic setting by adding another adversary stage after the second decision-maker stage, which results in min-max-min-max problems, thus pushing two-stage settings further towards more general multi-stage problems. We focus on budgeted uncertainty sets and consider both the continuous and discrete case. For the former, we show that a wide range of robust combinatorial optimization problems can be decomposed into polynomially many subproblems, which can be solved in polynomial time for example in the case of (\textsc{representative}) \textsc{selection}. For the latter, we prove NP-hardness for a wide range of problems, but note that the special case where first- and second-stage adversarial costs are equal can remain solvable in polynomial time. 
\end{abstract}

\noindent\textbf{Keywords:} Robustness and sensitivity analysis; robust optimization; combinatorial optimization; budgeted uncertainty; multistage optimization


\section{Introduction}\label{sec:intro}

As many real-world decision problems are affected by uncertainty, robust optimization has been developed as a methodology to incorporate knowledge of the uncertainty proactively. For some general constraint
\[ f(\pmb{x},\pmb{c}) \le b \]
with decision variables $\pmb{x}$ and parameter vector $\pmb{c}$, we assume that $\pmb{c}$ is uncertain and comes from an uncertainty set $\cU$. The classic robust optimization approach (see \cite{ben2009robust}) is to reformulate the uncertain constraint to the robust counterpart
\[ \max_{\pmb{c}\in\cU} f(\pmb{x},\pmb{c}) \le b.  \]
This has the disadvantage that one solution $\pmb{x}$ needs to be feasible for all possible choices $\pmb{c}$ simultaneously, while it is often possible in practice to adjust the solution after the uncertainty has been revealed \cite{ben2004adjustable}. By splitting the decision variables into $\pmb{x}$ (the here-and-now part) and $\pmb{y}$ (the wait-and-see part, which can be decided with knowledge of the scenario), the two-stage adjustable problem becomes
\[ \max_{\pmb{c}\in\cU} \min_{\pmb{y}\in\X(\pmb{x})} f(\pmb{x},\pmb{c},\pmb{y}) \le b, \]
where $\X(\pmb{x})$ contains additional constraints on $\pmb{y}$ that may depend on $\pmb{x}$.

While this approach is general, the analysis of such problems can be strengthened if we assume more structure in the set of feasible solutions and in the function $f$. In particular, we may consider the case where some combinatorial optimization problem is given, and where the objective functions $f(\pmb x, \pmb c)$ and $f(\pmb x, \pmb c, \pmb y)$ are linear in $\pmb{x}$ and $\pmb{y}$. 
That is, for some combinatorial optimization problem, where $\X \subseteq\{0,1\}^n$ is the set of its feasible solutions, the classic \emph{one-stage} problem is of the form
\[ \min_{\pmb{x}\in\X} \max_{\pmb{c}\in\cU} \pmb{c}^t\pmb{x}. \]
Again, one can split the decision variable into a here-and-now part and a wait-and-see part to obtain the following problem, which is known in the literature under the name \emph{two-stage adjustable problem}, or \emph{two-stage problem} for short. For sets of feasible solutions $\X,\X(\pmb{x})\subseteq\{0,1\}^n$, the problem is to find
\[ \min_{\pmb{x}\in\X} \max_{\pmb{d}\in\cU} \min_{\pmb{y}\in\X(\pmb{x})} \pmb{c}^t\pmb{x} + \pmb{d}^t\pmb{y}.\]

 This problem has been considered extensively, see e.g., \cite{chassein2018recoverable,kasperski2017robust,kasperski2017robustb} or the general survey \cite{kasperski2016robust}. However, note that there is a substantial difference between the one-stage and the two-stage problem: In the one-stage problem, the uncertainty affects the here-and-now variables (that is, the vectors $\pmb c$ and $\pmb x$ are multiplied with each other). In the two-stage problem on the other hand, the uncertainty affects the wait-and-see variables (that is, the vectors $\pmb d$ and $\pmb y$ are multiplied with each other). One could also say that in the one-stage problem the uncertainty affects the past, while in the two-stage problem the uncertainty affects the future. This substantial difference is also the reason that the one-stage problem in general is not a special case of the two-stage problem, even though the naming might suggest this.
 
\paragraph{One step further towards multi-stage problems.} We propose a variant of  the two-stage problem, which indeed contains the one-stage problem as a special case. In this variant the uncertainty is split into two parts, such that the first part affects the here-and-now variables, and the second part affects the wait-and-see variables (that is, both parts of the uncertainty affect past decisions). We call this variant \emph{two-stage robust optimization with two-stage uncertainty}. Formally, the problem is to find
\[ \min_{\pmb{x}\in\X} \max_{\pmb{c}\in\cU} \min_{\pmb{y}\in\X(\pmb{x})} \max_{\pmb{d}\in\cU(\pmb{c})} \pmb{c}^t \pmb{x} + \pmb{d}^t\pmb{y}, \]
where $\cU(\pmb{c})$ is the second-stage uncertainty set that may depend on the first-stage uncertainty choice $\pmb{c}$.

Compared to the classical two-stage problem, our variant includes an additional stage after the second decision of the decision-maker.  Problems of this type go one step further towards general multi-stage problems (see, e.g., \cite{bertsimas2016multistage,goerigk2020multistage}). They are also closely related to the vibrant field of explorable uncertainty \cite{feder2000computing,feder2007computing,erlebach2008computing,megow2017randomization,durr2018scheduling}, which has been presented using various framing, such as queries \cite{goerigk2015robust,halldorsson2021query}, probes \cite{goel2006asking,guha2007model,lee2022robust}, testing \cite{durr2020adversarial}, or information discovery \cite{vayanos2020robust}. What these approaches have in common is that the decision maker plays an active role in receiving information about which scenario will be chosen from the uncertainty set. Such an effect also occurs in our model, where the first-stage choice of items forces the adversary to reveal their costs. Different to most such models, the decision maker is forced to pack the items that have been queried, and the process is restricted to a single stage of queries.

We furthermore remark that it is a very natural problem variant: One could imagine for example a business owner, who has two consecutive business periods to deliver a promised product to her customer. At the start of every business period a decision has to be made, such that the sum of both decisions results in a finished product (that is, if in the first business period only a small portion of the product is produced, then in the second business period, all of the remaining product must be produced). However, each business period is subject to uncertainties, which are not known at the start of the business period. In this example, uncertainty affects the past, hence we are in a situation where the classical two-stage problem is not applicable.

While in general many different choices for the uncertainty sets $\mathcal{U}$ and $\mathcal{U}(\pmb c)$ are possible, the results in this paper mainly focus on so-called budgeted uncertainty sets as introduced by Bertsimas and Sim~\cite{bertsimas2003robust,bertsimas2004price}. In this setting, the adversary can choose up to $\Gamma$ items where costs become increased. Uncertainty sets of this type have seen frequent application to a diverse set of applications, such as wine grape harvesting \cite{bohle2010robust}, load planning of trains \cite{bruns2014robust}, inventory control \cite{bertsimas2006robust}, and evacuation planning \cite{goerigk2015two}.
We extend such uncertainty sets to our new setting.
Recall that the uncertainty in our new setting is split into two parts. Therefore, we consider the case where the adversary can increase costs of items in the first stage and increase costs of items in the second stage, in such a way that the total number of cost increases in the first and second stage combined is at most $\Gamma$. This is clearly a natural generalization of the classical budgeted uncertainty to two stages. We consider both the cases where the increases in cost are discrete (the so-called  \emph{discrete budgeted uncertainty}) or continuous (the so-called  \emph{continuous budgeted uncertainty}).

\paragraph{Our contributions.} In Section~\ref{sec:definitions} we formally introduce two-stage uncertainty and provide an example and some easy observations. In Section~\ref{sec:cont} we consider the two-stage robust problem with two-stage uncertainty in combination with two-stage \emph{continuous} budgeted uncertainty. Our main result is that for any nominal problem where solving the linear relaxation gives an integer solution, we can decompose the robust problem into $O(n^{2})$ nominal problems and $O(n^3)$ two-stage problems of a special structure. This special structure (and different generalizations of it) has been considered in the literature before, under the names \emph{coordination problem} \cite{chassein2020complexity}, \emph{optimization with interaction costs} \cite{lendl2019combinatorial} and \emph{two-stage optimization under the expected value criterion} \cite{goerigk2020two}.  This reveals an unexpected connection between the two-stage robust problem with two-stage uncertainty and the coordination problem \cite{chassein2020complexity}, or the interaction cost problem \cite{lendl2019combinatorial}, or the two-stage expected value criterion problem \cite{goerigk2020two}.

 As a particular consequence, we obtain that if these problems can be solved in polynomial time, so can the robust problem. This implies that the robust counterparts of many typical combinatorial optimization problems such as \textsc{selection} remain solvable in polynomial time. 

Apart from the main result, we also discuss a variant of budgeted uncertainty sets, which is sometimes used in the robust optimization literature (see, e.g., \cite{nasrabadi2013robust,chassein2018recoverable}), and in which case the robust problem becomes easy as well.

We then consider the setting of two-stage \emph{discrete} budgeted uncertainty in Section~\ref{sec:disc}, where we show that the robust problem allows a compact mixed-integer programming formulation, but becomes NP-hard for a wide range of combinatorial optimization problems. We also show that in the special case where cost vectors do not differ between first and second adversarial stage (that is, $\pmb c = \pmb d)$, problems becomes easier.

In Section~\ref{sec:experiments}, we run computational experiments. These experiments assess the benefit of our approach in comparison to classic one-stage robust optimization. They also assess the running time of our approach.

 Our paper concludes with Section~\ref{sec:conclusions}, where we point out further open questions.

\section{Problem definition}
\label{sec:definitions}

In this section, we first formally define the two-stage robust optimization problem with two-stage uncertainty. We then formally define two-stage budgeted uncertainty sets. After that, we give an example of the concepts introduced. Finally, we conclude the section with some easy general observations. Throughout the whole paper, we use the notation $[n]=\{1,\ldots,n\}$ and write vectors in bold.

\subsection{Definition of the two-stage robust problem}
The starting point is some combinatorial optimization problem, which we call the \emph{nominal problem}. We assume that
$\X_\text{nom} \subseteq \{0,1\}^n$ is the set of feasible solutions to the nominal problem and that $\X_\text{nom}$ can be described in matrix form, i.e.\ $\X_\text{nom} = \{\pmb{x} \in \{0,1\}^n : A\pmb{x}\ge\pmb{b}\}$ for some matrix $A\in\mathbb{R}^{m\times n}$ and some right-hand-side $\pmb{b}\in\mathbb{R}^m$. For a given cost vector $\pmb{c}\in\mathbb{R}^n$, the nominal problem is hence to solve
\[ \textsc{Nom}(\pmb{c}) = \min_{\pmb{x} \in \X_\text{nom}} \pmb{c}^t \pmb{x}. \]
Throughout this paper, we will consider several classical combinatorial 
optimization problems. Recall that the \textsc{selection} problem is given by the set of feasible solutions
$\X_\text{nom} = \{ \pmb{x}\in\{0,1\}^n : \sum_{i\in[n]} x_i= p\}$ for some constant $p$.
A related problem, the \textsc{representative selection} problem, 
is defined by the set of feasible solutions $\X_\text{nom} = \{ \pmb{x}\in\{0,1\}^n : \sum_{i\in T_j} x_i=1, j\in[\ell]\}$ 
for an item partition $T_1\cup T_2 \cup \ldots \cup T_\ell = [n]$, i.e., we have $T_i \cap T_j = \emptyset$ for $i \neq j$. Also note that commonly studied graph problems like the \textsc{shortest path} problem, the \textsc{spanning tree} problem and 
the \textsc{assignment} problem (see~\cite{korte2011combinatorial} for definitions) fit into our framework of combinatorial optimization problems.

For every nominal combinatorial optimization problem, the corresponding two-stage problem variant is the following: In the first stage, a vector $\pmb x \in \{0,1\}^n$ can be selected without any restriction. However, in the second stage, a vector $\pmb y \in \{0,1\}^n$ must be selected, such that $\pmb x$ and $\pmb y$ together form a feasible solution (formally, this means that $\pmb x + \pmb y \leq \pmb 1$ and $\pmb x + \pmb y \in \X_\text{nom}$). The vector $\pmb x$ is called the first-stage solution, or the here-and-now part, and the vector $\pmb y$ is called the second-stage solution, or the wait-and-see part.

Summarizing the above, we have that
\[
\X = \{0,1\}^n
\]
is the set of feasible first-stage solutions, and
\[
\X(\pmb{x}) = \Big\{ \pmb{y}\in\{0,1\}^n : A(\pmb{x}+\pmb{y}) \ge \pmb{b},\ \pmb{x}+\pmb{y} \le \pmb{1} \Big\}.
\]
is the set of feasible second-stage solutions. Note that $\X(\pmb{0}) = \X_\text{nom}$ is the set of feasible solutions for the nominal problem. To treat the case that a first-stage solution $\pmb{x}\in\X$ is chosen such that $\X(\pmb{x})=\emptyset$, 
we define $\min \emptyset = \infty$. This means that the first-stage solution $\pmb x$ has infinite costs in this case. 

We now define our new two-stage problem variant, as well as several of its subproblems.
In the \emph{adversarial recourse problem}, we solve for given $\pmb{x}\in\X$, $\pmb{c}\in\cU$, and $\pmb{y}\in\X(\pmb{x})$ the problem
\[ \textsc{AdvRec}(\pmb{x},\pmb{c},\pmb{y}) = \max_{\pmb{d}\in\cU(\pmb{c})} \pmb{c}^t \pmb{x} + \pmb{d}^t\pmb{y}, \]
i.e., we only consider the last problem stage. One layer above this is the \emph{recourse problem}, where we solve
\[ \textsc{Rec}(\pmb{x},\pmb{c}) =  \min_{\pmb{y}\in\X(\pmb{x})} \max_{\pmb{d}\in\cU(\pmb{c})} \pmb{c}^t \pmb{x} + \pmb{d}^t\pmb{y}. \]
The \emph{adversarial problem} is to solve
\[ \textsc{Adv}(\pmb{x}) = \max_{\pmb{c}\in\cU} \min_{\pmb{y}\in\X(\pmb{x})} \max_{\pmb{d}\in\cU(\pmb{c})} \pmb{c}^t \pmb{x} + \pmb{d}^t\pmb{y} \]
and finally, the \emph{two-stage robust problem with two-stage uncertainty} is given as follows
\[ \textsc{Rob} = \min_{\pmb{x}\in\X} \max_{\pmb{c}\in\cU} \min_{\pmb{y}\in\X(\pmb{x})} \max_{\pmb{d}\in\cU(\pmb{c})} \pmb{c}^t \pmb{x} + \pmb{d}^t\pmb{y}. \label{eq:rob} \]
We sometimes simply refer to this as the robust problem, if the context is clear.

\subsection{Definition of two-stage budgeted uncertainty}
Budgeted uncertainty sets describe uncertain scenarios, in which the adversary can increase the cost of up to $\Gamma$ items, where the value $\Gamma$ is called the \emph{budget}. We assume that there are $n$ items in total. For every item $i=1,\dots,n$, we are given two numbers $\underline{c}_i$ and $\overline{c}_i$ with $0 \leq \underline{c}_i \leq \overline{c}_i$. The initial cost of the item is given by $\underline{c}_i$, but the adversary can spend one unit of budget to increase the cost to $\overline{c}_i$. In the case of continuous budgeted uncertainty, this cost increase can also be fractional.

We extend the notion of continuous budgeted uncertainty to two-stage uncertainty sets by assuming that the adversary has a fixed budget $\Gamma  \geq 0$ that can be distributed throughout the first and second stage. That is, in the first stage we use a standard set (see \cite{bertsimas2003robust}) of the form
\[
\cU = \left\{ \pmb{c}\in\mathbb{R}^n_+ : c_i = \underline{c}_i + (\overline{c}_i-\underline{c}_i)\delta_i,\ \delta_i\in[0,1]\ \forall i\in[n], \  \sum_{i\in[n]} \delta_i \le \Gamma \right\},
\]
while for the second adversarial stage, only the remaining budget can still be used. That is, given a first-stage cost vector $\pmb{c}$, the amount of the available budget that has already been used is $\sum_{i\in[n]} (c_i - \underline{c}_i)/(\overline{c}_i - \underline{c}_i)$, therefore the remaining budget is $\Gamma - \sum_{i\in[n]} (c_i - \underline{c}_i)/(\overline{c}_i - \underline{c}_i)$. The corresponding second-stage uncertainty set is 
\[
\cU(\pmb{c}) = \left\{ \pmb{d}\in\mathbb{R}^n_+ : d_i = \underline{d}_i + (\overline{d}_i-\underline{d}_i)\delta_i,\  \delta_i\in[0,1]\ \forall i\in[n], \  \sum_{i\in[n]} \delta_i \le \Gamma - \sum_{i\in[n]} \frac{c_i - \underline{c}_i}{\overline{c}_i - \underline{c}_i} \right\}.
\]

In summary, $\cU$ and $\cU(\pmb c)$ together enforce that the adversary can distribute a budget of $\Gamma$ over the first and second stage combined. Finally, \emph{discrete} budgeted two-stage uncertainty sets are defined the same way as $\cU$ and $\cU(\pmb c)$ with the only difference that we use $\delta_i \in \{0,1\}$ instead of $\delta_i \in [0,1]$ for both sets.

\subsection{Example}

We give an example of the concepts introduced earlier in this section. In this example, we have a two-stage robust \textsc{selection} problem with two-stage discrete budgeted uncertainty. The parameters for the \textsc{selection} problem are as follows: We have three items ($n = 3$). Two out of three items need to be packed $(p = 2)$. Furthermore, the adversary can increase the costs of only one item to its upper bound $(\Gamma = 1)$. Finally, every item has costs as described in Table~\ref{tab:ex}, such that the first-stage costs and second-stage costs are equal ($\underline{c}_i = \underline{d}_i$ and $\overline{c}_i = \overline{d}_i$ for all $i$).
\begin{table}[htb]
\begin{center}
\begin{tabular}{r|rrr}
$i$ & 1 & 2 & 3 \\
\hline
$\underline{c}_i$ & 3 & 1 & 4 \\
$\overline{c}_i$  & 7 & 10 & 5 
\end{tabular}
\end{center}
\caption{Example uncertain \textsc{selection} problem.}\label{tab:ex}
\end{table}

In the classic one-stage problem, an optimal solution is to pack items 1 and 3 with objective value $7+4=11$ (the adversary increases the cost of item 1). In the two-stage setting, it is possible to pack only item 1 in the first stage. If the adversary increases the costs of this item, we complete the solution by packing item 2 in the second stage with total costs $7+1=8$. If the adversary decides to keep the budget for the second stage, we then complete the solution by packing item 3 with total costs $3+5=8$. Hence, we save three units in comparison to the one-stage solution.

This example showcases an interesting fact: The possibility to wait-and-see in the two-stage setting offers an advantage to the decision maker compared to the one-stage setting. We will determine the magnitude of this advantage using computational experiments in Section~\ref{sec:experiments}.

\subsection{General observations}

In this subsection, we make two easy general observations about two-stage robust problems with two-stage uncertainty. One observation is that they indeed do contain the one-stage problem as a special case. The second observation is that the problem collapses to a classical one-stage problem in the case where $\cU$ and $\cU(\pmb c)$ are independent of each other.

\begin{observation}
The two-stage robust problem with two-stage uncertainty contains the robust one-stage problem as a special case.
\end{observation}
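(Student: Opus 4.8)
The plan is to exhibit a concrete specialization of the parameters of the two-stage problem with two-stage uncertainty that reduces it, term by term, to the one-stage problem $\min_{\pmb{x}\in\X}\max_{\pmb{c}\in\cU}\pmb{c}^t\pmb{x}$. The key observation is that the one-stage objective only involves the product $\pmb{c}^t\pmb{x}$, whereas our general objective has the extra term $\pmb{d}^t\pmb{y}$ and the inner $\min$--$\max$ over the second stage. The goal is therefore to force these extra terms to vanish and the inner two stages to become trivial.

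First I would take the nominal problem to be the one arising from the given one-stage instance, and choose the second-stage data so that the second-stage uncertainty contributes nothing. Concretely, I would set $\underline{d}_i=\overline{d}_i=0$ for all $i\in[n]$, so that every feasible $\pmb{d}\in\cU(\pmb{c})$ is the zero vector and hence $\pmb{d}^t\pmb{y}=0$ regardless of $\pmb{y}$ and regardless of how much budget remains. Under this choice, the innermost $\max_{\pmb{d}\in\cU(\pmb{c})}$ evaluates to $0$ and the objective collapses to $\pmb{c}^t\pmb{x}$. Next I would make the second decision-maker stage vacuous by arranging the nominal feasibility structure so that the only way to complete any first-stage choice is forced, or alternatively by restricting attention to first-stage solutions $\pmb{x}$ that are already feasible; the cleanest route is to observe that since $\X=\{0,1\}^n$ allows $\pmb{x}$ to be chosen freely, one can take $\pmb{x}\in\X_\text{nom}$ directly and let $\pmb{y}=\pmb{0}$, which lies in $\X(\pmb{x})$ whenever $\pmb{x}$ is nominally feasible. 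With $\pmb{y}=\pmb0$ optimal for the inner $\min$ (as it incurs zero additional cost and the $\pmb{d}^t\pmb{y}$ term is already zero), the $\min_{\pmb{y}}$ layer contributes nothing.

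After these reductions the chain $\min_{\pmb{x}}\max_{\pmb{c}}\min_{\pmb{y}}\max_{\pmb{d}}(\pmb{c}^t\pmb{x}+\pmb{d}^t\pmb{y})$ simplifies to $\min_{\pmb{x}\in\X_\text{nom}}\max_{\pmb{c}\in\cU}\pmb{c}^t\pmb{x}$, which is exactly the one-stage problem, completing the reduction. The only subtlety I would check carefully is the interplay between the budget used in the first stage and the remaining budget in the second stage: since the second-stage intervals $[\underline{d}_i,\overline{d}_i]$ are degenerate (width zero), the remaining budget is irrelevant, so no matter how $\pmb{c}\in\cU$ spends the budget $\Gamma$ in the first stage, the second stage is inert. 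I expect the main (and really the only) obstacle to be a bookkeeping one: making sure the first-stage feasibility set $\X$ and the recourse set $\X(\pmb{x})$ are specialized so that restricting to $\pmb{x}\in\X_\text{nom}$, $\pmb{y}=\pmb{0}$ does not change the optimal value, i.e.\ that allowing genuine first-stage/second-stage splits cannot help the decision maker here. Because the $\pmb{d}^t\pmb{y}$ term is identically zero, any split yields the same objective $\pmb{c}^t\pmb{x}$ as committing everything in the first stage, so this reduces to the trivial verification that the decision maker gains nothing from deferring, and the argument goes through.
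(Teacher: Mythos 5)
Your reduction fails, and it fails exactly at the point you flag as ``the only subtlety.'' By setting $\underline{d}_i=\overline{d}_i=0$ you make second-stage items \emph{free}, and since the objective $\pmb{c}^t\pmb{x}+\pmb{d}^t\pmb{y}$ charges only first-stage items at the uncertain cost $\pmb{c}$, the decision maker's optimal strategy in your specialized instance is to defer \emph{everything}: choose $\pmb{x}=\pmb{0}$ (which is allowed, since $\X=\{0,1\}^n$ is unrestricted and $\X(\pmb{0})=\X_\text{nom}\neq\emptyset$), then complete to any nominally feasible solution in the second stage at cost $\pmb{d}^t\pmb{y}=0$. The optimal value of your instance is therefore $0$, not $\min_{\pmb{x}\in\X_\text{nom}}\max_{\pmb{c}\in\cU}\pmb{c}^t\pmb{x}$. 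Your closing claim that ``any split yields the same objective $\pmb{c}^t\pmb{x}$ as committing everything in the first stage'' is the precise error: items deferred to the second stage drop out of $\pmb{c}^t\pmb{x}$ entirely, so the split is anything but neutral --- the decision maker gains everything by deferring. Exhibiting the feasible point $\pmb{x}\in\X_\text{nom}$, $\pmb{y}=\pmb{0}$ only shows that the two-stage value is \emph{at most} the one-stage value; the reverse inequality is what needs proving, and it is false for your choice of data.

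The fix is to push the second-stage costs in the opposite direction, which is what the paper does: take $\cU(\pmb{c})=\{\pmb{d}^{wc}\}$ to be a singleton with $d^{wc}_i>\max_{\pmb{c}\in\cU}\|\pmb{c}\|_1$ for every $i$. Then buying even a single item in the second stage costs more than any complete first-stage solution possibly could, so every optimal strategy must buy a full nominal solution up front ($\pmb{x}\in\X_\text{nom}$, $\pmb{y}=\pmb{0}$), and the two-stage optimal value coincides with the one-stage optimal value. In short: to kill the wait-and-see advantage you must make waiting prohibitively expensive, not free.
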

\begin{proof}
Assume that a one-stage problem $\min_{\pmb{x}\in\X_\text{nom}}\max_{\pmb{c}\in\cU}$ is given. By defining the costs of each scenario in $\cU(\pmb{c})$ to be sufficiently large (e.g., set $\cU(\pmb{c}) = \{\pmb{d}^{wc}\}$ with $d^{wc}_i > \max_{\pmb{c}\in\cU} \|\pmb{c}\|_1$), any optimal solution to the two-stage problem with two-stage uncertainty
\[\min_{\pmb{x}\in\X} \max_{\pmb{c}\in\cU} \min_{\pmb{y}\in\X(\pmb{x})} \max_{\pmb{d}\in\cU(\pmb{c})} \pmb{c}^t \pmb{x} + \pmb{d}^t\pmb{y} \]
cannot choose to buy an item in the second stage. Hence, both problems are equivalent.
\end{proof}

\begin{observation}
If the first-stage and second-stage uncertainty sets are independent in the sense that $\cU(\pmb c)$ is constant with respect to $\pmb c \in \cU$, then the two-stage robust problem with two-stage uncertainty is equivalent to a classic robust one-stage problem.
\end{observation}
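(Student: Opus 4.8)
The plan is to reduce the nested four-level expression defining $\textsc{Rob}$ to a plain $\min$--$\max$ by a sequence of elementary manipulations, exploiting that the objective $\pmb c^t\pmb x + \pmb d^t\pmb y$ is separable and that, by hypothesis, the inner uncertainty set no longer reacts to the outer adversary's choice. Write $\cU(\pmb c)=\cU'$ for the fixed second-stage uncertainty set. First I would treat the innermost maximization: since $\pmb c^t\pmb x$ does not involve $\pmb d$, we have $\max_{\pmb d\in\cU'}(\pmb c^t\pmb x+\pmb d^t\pmb y)=\pmb c^t\pmb x+\max_{\pmb d\in\cU'}\pmb d^t\pmb y$, and the remaining maximum is independent of $\pmb c$.

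Next I would push the term $\pmb c^t\pmb x$, which is constant with respect to $\pmb y$, outside the minimization over $\pmb y$, and then move the (now $\pmb y$-independent) outer maximum over $\pmb c$ inward. Because $\cU$ and $\cU'$ are independent and the objective is separable, the two nested maxima combine into a single maximum over the product set,
\[ \max_{\pmb c\in\cU}\pmb c^t\pmb x+\max_{\pmb d\in\cU'}\pmb d^t\pmb y=\max_{(\pmb c,\pmb d)\in\cU\times\cU'}\big(\pmb c^t\pmb x+\pmb d^t\pmb y\big). \]
Carrying this through, the whole expression collapses to
\[ \textsc{Rob}=\min_{\pmb x\in\X}\ \min_{\pmb y\in\X(\pmb x)}\ \max_{(\pmb c,\pmb d)\in\cU\times\cU'}\big(\pmb c^t\pmb x+\pmb d^t\pmb y\big). \]

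Finally I would merge the two consecutive minimizations into a single minimization over the set of valid pairs $\{(\pmb x,\pmb y):\pmb x\in\X,\ \pmb y\in\X(\pmb x)\}$, which is again of the form $\{(\pmb x,\pmb y)\in\{0,1\}^{2n}:A(\pmb x+\pmb y)\ge\pmb b,\ \pmb x+\pmb y\le\pmb 1\}$ and hence a combinatorial feasible set. Reading $(\pmb x,\pmb y)\in\{0,1\}^{2n}$ as a single decision vector, $(\pmb c,\pmb d)$ as a single cost vector drawn from $\cU\times\cU'$, and $\pmb c^t\pmb x+\pmb d^t\pmb y$ as the associated linear objective, this is exactly a classic robust one-stage problem $\min_{(\pmb x,\pmb y)}\max_{(\pmb c,\pmb d)}$, as claimed.

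The manipulations are all routine, so I do not expect a genuine obstacle; the only point requiring care is the treatment of infeasible first-stage choices, that is, those $\pmb x$ with $\X(\pmb x)=\emptyset$. Using the convention $\min\emptyset=\infty$, one checks that such $\pmb x$ contribute value $\infty$ on both sides of every identity above (on the original side because the inner minimum equals $\infty$ for every $\pmb c\in\cU$), so the equalities remain valid and these choices are simply never selected by the outer minimization unless every first-stage choice is infeasible.
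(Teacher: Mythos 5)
Your proof is correct and follows essentially the same route as the paper's: both exploit separability of the objective and the independence of $\cU'$ from $\pmb c$ to collapse the nested $\min$--$\max$--$\min$--$\max$ into a single $\min$--$\max$ over the pair set $\{(\pmb x,\pmb y) : \pmb x\in\X,\ \pmb y\in\X(\pmb x)\}$ and the product uncertainty set $\cU\times\cU'$. Your explicit treatment of first-stage choices with $\X(\pmb x)=\emptyset$ via the convention $\min\emptyset=\infty$ is a small additional care the paper leaves implicit, but it does not change the argument.
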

\begin{proof}
Let $\cU' := \cU(\pmb c)$ be the constant value of $\cU(\pmb c)$. Using the fact that for fixed $\pmb{x} \in \X$, the term $\max_{\pmb c \in \cU} \pmb {c}^t \pmb x$ is constant, we can rewrite the two-stage robust problem with two-stage uncertainty the following way:
\begin{align*}
&\min_{\pmb{x}\in\X} \max_{\pmb{c}\in\cU} \min_{\pmb{y}\in\X(\pmb{x})} \max_{\pmb{d}\in\cU'} \pmb{c}^t \pmb{x} + \pmb{d}^t\pmb{y}\\
= &\min_{\pmb{x}\in\X} \left( \max_{\pmb{c}\in\cU} \pmb{c}^t \pmb{x} + \min_{\pmb{y}\in\X(\pmb{x})} \max_{\pmb{d}\in\cU'} \pmb{d}^t\pmb{y} \right)\\
= &\min_{\pmb{x}\in\X} \min_{\pmb{y}\in\X(\pmb{x})} \max_{\pmb{d}\in\cU'} \left( \pmb{d}^t\pmb{y} + \left( \max_{\pmb{c}\in\cU} \pmb{c}^t \pmb{x} \right) \right)\\
=& \min_{\pmb{x}\in\X, \pmb{y}\in\X(\pmb{x})} \max_{\pmb{c}\in\cU, \pmb{d}\in\cU'}  (\pmb{c}, \pmb d)^t (\pmb x, \pmb y)
\end{align*}
This corresponds to a classic robust one-stage problem where the set of feasible solutions is $\{(\pmb x,\pmb y) \mid \pmb x \in \X, \pmb y \in \X(\pmb x)\}$ and the uncertainty set is $\cU \times \cU'$.
\end{proof}

\section{Two-stage continuous budgeted uncertainty}
\label{sec:cont}

In this section, we consider two-stage continuous budgeted uncertainty. Our main result is a decomposition result similar to the decomposition result of \cite{bertsimas2003robust}. This decomposition result also reveals a surprising connection between two-stage continuous budgeted uncertainty and the \emph{coordination problem} \cite{chassein2020complexity}, or \emph{optimization with interaction costs} \cite{lendl2019combinatorial}, or \emph{two-stage optimization under the expected value criterion} \cite{goerigk2020two}. In Subsection~\ref{subsection:statement_main_result} we state the main result, which we then prove in Subsection~\ref{subsection:proof_main_result}. Finally, in Subsection~\ref{subsection:uvar} we consider a different variant of continuous budgeted uncertainty that is also sometimes used in the literature.

\subsection{Statement of the main result}
\label{subsection:statement_main_result}
Let $P = \{ \pmb{x}\in[0,1]^n : A\pmb{x} \ge \pmb{b} \}$ be the polyhedron of the LP-relaxation of the nominal problem.
In this section we make use of the assumption that we can use the LP-relaxation to find an optimal solution for the nominal problem, that is:
\begin{equation}
P \text{ is an integral polyhedron.} \tag{A}\label{ass}
\end{equation}
We now state the main result of this section. It says that we can decompose the robust problem into multiple nominal problems and multiple so-called problems of type (\ref{p8}), which are two-stage problems with a special structure.
\begin{definition}
A \emph{problem of type (\ref{p8})} is the following problem, where $\tilde{\pmb{a}},\tilde{\pmb{b}},\tilde{\pmb{c}}\in\mathbb{R}^n_+$ and $\tilde{v}\ge 0$ are constant coefficients:
\begin{subequations}
\label{p8}
\begin{align}
\min\ & \tilde{\pmb{a}}^t\pmb{x} + \tilde{\pmb{b}}^t\pmb{y}^{(1)} + \tilde{\pmb{c}}^t\pmb{y}^{(2)} + \tilde{v} \\
\text{s.t. } 
& A(\pmb{x}+\pmb{y}^{(1)}) \ge \pmb{b}  \\
& A(\pmb{x}+\pmb{y}^{(2)}) \ge \pmb{b}  \\
& \pmb{x} + \pmb{y}^{(1)} \le \pmb{1} \\
& \pmb{x} + \pmb{y}^{(2)} \le \pmb{1} \\
& x_i \in\{0,1\} & \forall i\in[n] \\
& y^{(1)}_i,y^{(2)}_i \in \{0,1\} & \forall i\in[n].
\end{align}
\end{subequations}
\end{definition}

\begin{theorem}\label{th:p}
Let a robust two-stage combinatorial optimization problem with two-stage continuous budgeted uncertainty be given. If assumption~\eqref{ass} holds, then a solution to the robust problem can be obtained by taking the minimum of the objective values of
\begin{itemize}
\item $O(n^{2})$ many problems of nominal type, and
\item $O(n^3)$ many problems of type (\ref{p8}).
\end{itemize}
In particular, if problem of type~\eqref{p8} can be solved in polynomial time, so can the robust problem.
\end{theorem}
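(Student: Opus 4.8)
The plan is to attack the nested $\min_{\pmb x}\max_{\pmb c}\min_{\pmb y}\max_{\pmb d}$ structure from the inside out, peeling off one quantifier at a time and using LP duality to collapse each adversarial $\max$ over a budgeted set into a tractable form. First I would handle the innermost problem $\textsc{AdvRec}(\pmb x,\pmb c,\pmb y)=\max_{\pmb d\in\cU(\pmb c)}\pmb c^t\pmb x+\pmb d^t\pmb y$. Since $\pmb c^t\pmb x$ is a constant here and $\cU(\pmb c)$ is a continuous budgeted set with remaining budget $\Gamma-\sum_i(c_i-\underline c_i)/(\overline c_i-\underline c_i)$, this is a fractional knapsack-type LP in $\pmb d$, so I would write its dual. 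The key observation I expect to exploit is that, because of assumption~\eqref{ass}, whenever we have a fractional $\pmb x$ or fractional $\delta_i$ arising from such a relaxation we can round to an integral optimum at no cost in objective value. This is what will let the continuous relaxations appearing in the decomposition be solved exactly by the integral problems of type~\eqref{p8} and of nominal type.

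Next I would move one layer out to the recourse problem $\textsc{Rec}(\pmb x,\pmb c)=\min_{\pmb y}\max_{\pmb d}$ and then to $\textsc{Adv}(\pmb x)=\max_{\pmb c}\min_{\pmb y}\max_{\pmb d}$. The standard Bertsimas--Sim trick (from \cite{bertsimas2003robust}) is to guess the threshold item, i.e.\ to guess the value of the dual variable associated with the budget constraint $\sum_i\delta_i\le\Gamma$. In the classical one-stage setting this gives a decomposition into $O(n)$ nominal problems, one per candidate threshold. Here the uncertainty is split across two stages sharing one budget $\Gamma$, so I expect to need to guess two thresholds, one governing the first-stage increases on $\pmb x$ (the $\pmb c$ variables) and one governing the second-stage increases on $\pmb y$ (the $\pmb d$ variables). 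Guessing an ordered pair of threshold items accounts for the $O(n^2)$ factor on the nominal problems and, together with the coupling induced by the shared budget, for the extra factor of $n$ that yields $O(n^3)$ problems of type~\eqref{p8}. In each fixed-threshold subproblem the two adversarial maxima become linear, and the remaining $\min_{\pmb x}\min_{\pmb y}$ over the two feasibility systems (one for the scenario where the second-stage budget is spent and one where it is not) is exactly the two-recourse-vector structure encoded by constraints in~\eqref{p8}.

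The main obstacle, I expect, is correctly handling the dependence of $\cU(\pmb c)$ on $\pmb c$, which couples the two adversarial stages through the single budget $\Gamma$. Unlike an independent two-stage uncertainty (the trivial case dispatched in Observation~2), here the first-stage adversary faces a genuine tradeoff: spending budget now to raise $\pmb c^t\pmb x$ versus saving it to raise $\pmb d^t\pmb y$ after seeing $\pmb y$. Making the threshold-guessing argument rigorous requires showing that at optimality the adversary's fractional budget split takes an extremal form described by a pair of thresholds, and that the induced inner minimization over $(\pmb x,\pmb y^{(1)},\pmb y^{(2)})$ is precisely a problem of type~\eqref{p8} with appropriately defined coefficients $\tilde{\pmb a},\tilde{\pmb b},\tilde{\pmb c},\tilde v$. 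The two auxiliary recourse vectors $\pmb y^{(1)},\pmb y^{(2)}$ should correspond to the decision maker's best response in the two regimes distinguished by whether the second-stage budget constraint is tight.

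Finally I would assemble the pieces: the overall optimum is the minimum over all $O(n^2)$ threshold-pair choices of the associated subproblem value, where subproblems with no second-stage budget reduce to nominal problems and the genuinely two-stage ones are of type~\eqref{p8}. I would verify the claimed counts ($O(n^2)$ nominal, $O(n^3)$ of type~\eqref{p8}) by bounding the number of threshold pairs and the extra enumeration over the budget-split breakpoint, and conclude that polynomial solvability of~\eqref{p8} yields polynomial solvability of $\textsc{Rob}$, since the minimum of polynomially many polynomially-solvable problems is computed in polynomial time.
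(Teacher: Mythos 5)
Your overall strategy---peel the quantifiers from the inside out, dualize each budgeted maximum, restrict the budget duals to the $O(n)$ breakpoints of a piecewise linear function in Bertsimas--Sim fashion, and use assumption~\eqref{ass} to pass freely between the integer recourse problems and their LP relaxations---is exactly the skeleton of the paper's proof. The paper likewise dualizes $\textsc{AdvRec}$, restricts the second-stage dual $\pi^{(2)}$ to $\Pi=\{\overline{d}_i-\underline{d}_i : i\in[n]\}\cup\{0\}$, then (after first proving that \eqref{ass} implies integrality of every $P(\pmb{x})$) dualizes the inner minimization over $\X(\pmb{x})$ to express the adversarial problem as a single LP~\eqref{p3}, dualizes once more to obtain a compact minimization~\eqref{p5}, and enumerates the first-stage dual $\pi^{(1)}$.

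However, there is a genuine gap precisely at the point you yourself flag as ``the main obstacle'', and your proposed resolution points in the wrong direction. After the second dualization, formulation~\eqref{p5} contains convex-combination variables $z^{(k)}$, one per candidate second-stage threshold $\pi^{(2)}_k$, with $\sum_{k} z^{(k)}=1$, $z^{(k)}\ge 0$. The heart of the paper's argument is a structural lemma (Lemma~\ref{lemmaz}): since the objective is linear in $\pmb{z}$ plus positive-part terms of the form $[b_i-\sum_k c_k z^{(k)}]_+$, an LP basis argument shows that some optimal $\pmb{z}$ is supported on at most two indices, and when two indices $k_1,k_2$ are active, the values $z^{(k_1)},z^{(k_2)}$ are pinned down by a single breakpoint item $i'$. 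This lemma is what yields both the form and the count of the subproblems: one active $z^{(k)}$ gives the $O(n^2)$ nominal problems ($O(n)$ choices of $k$ times $O(n)$ choices of $\pi^{(1)}$), while two active $z^{(k)}$ give problems of type~\eqref{p8} indexed by tuples $(k_1,k_2,i',\pi^{(1)})$---two second-stage thresholds, one breakpoint item, and $O(1)$ remaining choices of $\pi^{(1)}$---hence $O(n^3)$. In particular, the two recourse vectors $\pmb{y}^{(1)},\pmb{y}^{(2)}$ in~\eqref{p8} are the responses to two \emph{distinct second-stage thresholds} $\pi^{(2)}_{k_1}\neq\pi^{(2)}_{k_2}$ between which the optimal solution hedges; they are \emph{not}, as you suggest, the responses in the regimes ``second-stage budget tight'' versus ``not tight''---in general both active thresholds are strictly positive elements of $\Pi$, so your dichotomy would cover only the special pairs with one threshold equal to zero. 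Without an extremality statement of the type of Lemma~\ref{lemmaz}, your claim that each fixed-threshold subproblem ``is exactly the two-recourse-vector structure'' of~\eqref{p8} is unsupported, and the $O(n^3)$ count, which you attribute vaguely to ``the coupling induced by the shared budget'', does not follow. Supplying and proving that lemma is the one idea your sketch is missing.
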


We note the similarity of the classic result from \cite{bertsimas2003robust}, which states that robust problems with one-stage budgeted uncertainty can be decomposed into $O(n)$ nominal problems. By including an additional stage on the decision variables and an additional stage on the adversarial variables, the problem complexity is now increased both in the number and the type of subproblems that need to be considered.

We now explain how problem of type~\eqref{p8} relates to the existing literature. By setting $\pmb{z}^{(1)} = \pmb{x}+\pmb{y}^{(1)}$ and $\pmb{z}^{(2)} = \pmb{x}+\pmb{y}^{(2)}$, the problem becomes
\[ 
\min_{\pmb{z}^{(1)},\pmb{z}^{(2)}\in\X_{\text{nom}}} \sum_{i\in[n]} \tilde{b}_iz^{(1)}_i + \sum_{i\in[n]} \tilde{c}_i z^{(2)}_i + \sum_{i\in[n]} (\tilde{a}_i - \tilde{b}_i - \tilde{c}_i) z^{(1)}_iz^{(2)}_i.
\]
Note that a bilinear term was introduced to the objective by the substitution. This particular problem has been considered by other authors before: In fact, it is an instance of the coordination problem (with potentially negative interaction costs) \cite{chassein2020complexity}. It is also a diagonal combinatorial optimization problem with interaction costs \cite{lendl2019combinatorial}, see also \cite{iwamasa}.
Furthermore, it is a two-stage combinatorial optimization problem with two scenarios under the expected value criterion \cite{goerigk2020two}.
Based on what is already known about problems~\eqref{p8} in \cite{goerigk2020two}, 
we can conclude the following complexity results.

\begin{cor}
The robust two-stage counterparts with two-stage continuous budgeted uncertainty for the \textsc{representative selection} problem, the \textsc{selection} problem, and the \textsc{shortest path} problem in series-parallel graphs are solvable in polynomial time.
\end{cor}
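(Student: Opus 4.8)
The plan is to derive the corollary directly from Theorem~\ref{th:p}. That theorem guarantees that, whenever the integrality assumption~\eqref{ass} holds, the robust two-stage problem reduces to solving $O(n^2)$ nominal instances together with $O(n^3)$ instances of type~\eqref{p8}. Hence for each of the three nominal problems it suffices to check exactly two things: that~\eqref{ass} is satisfied, and that the associated problem of type~\eqref{p8} is polynomially solvable. The $O(n^2)$ nominal subproblems require no separate treatment, since \textsc{selection}, \textsc{representative selection}, and \textsc{shortest path} are all classically solvable in polynomial time, and these subproblems differ from the original nominal problem only in their cost vectors.

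First I would verify assumption~\eqref{ass}. For \textsc{selection}, the relaxation polytope $\{\pmb x\in[0,1]^n : \sum_{i\in[n]} x_i = p\}$ is a hypersimplex, whose vertices are precisely the $0/1$ vectors with exactly $p$ ones, so $P$ is integral. For \textsc{representative selection}, the constraints $\sum_{i\in T_j} x_i = 1$ act on the disjoint blocks $T_j$, so $P$ is a Cartesian product of standard simplices; each simplex has only unit-vector vertices, and a product of integral polytopes is integral. For \textsc{shortest path}, the relevant constraint matrix is (via the usual flow reformulation) the node-arc incidence matrix of a directed graph, which is totally unimodular, so $P$ is integral for arbitrary graphs. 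Thus~\eqref{ass} holds in all three cases, and in particular the series-parallel restriction plays no role here.

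It remains to establish polynomial solvability of the $O(n^3)$ problems of type~\eqref{p8}, which is the genuinely nontrivial step and the main obstacle. As already noted, after the substitution $\pmb z^{(1)}=\pmb x+\pmb y^{(1)}$ and $\pmb z^{(2)}=\pmb x+\pmb y^{(2)}$, a problem of type~\eqref{p8} is exactly a two-stage combinatorial optimization problem with two scenarios under the expected value criterion in the sense of~\cite{goerigk2020two} (equivalently, a coordination problem~\cite{chassein2020complexity} or a diagonal interaction-cost problem~\cite{lendl2019combinatorial}). The key step is therefore to invoke the polynomial-time algorithms of~\cite{goerigk2020two} for precisely these nominal structures. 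For the two selection variants this rests on a combinatorial exchange/ordering argument over the items, whereas for \textsc{shortest path} the decisive point is that the diagonal interaction-cost problem is hard on general graphs but becomes tractable on series-parallel graphs; this is exactly why the corollary restricts to the series-parallel case. Combining these three algorithms for~\eqref{p8} with the integrality of $P$ and the decomposition of Theorem~\ref{th:p} yields a polynomial-time procedure for the robust two-stage problem with two-stage continuous budgeted uncertainty in each of the three settings.
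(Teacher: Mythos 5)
Your proposal is correct and follows essentially the same route as the paper: the paper derives the corollary by combining Theorem~\ref{th:p} with the known polynomial-time results of~\cite{goerigk2020two} for problems of type~\eqref{p8} (via the substitution $\pmb{z}^{(1)}=\pmb{x}+\pmb{y}^{(1)}$, $\pmb{z}^{(2)}=\pmb{x}+\pmb{y}^{(2)}$), which is exactly your argument. Your explicit verification of assumption~\eqref{ass} for the three problems (hypersimplex, product of simplices, total unimodularity) is a detail the paper leaves implicit, but it does not change the approach.
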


Note that it is a currently open research problem to determine the complexity of other combinatorial optimization problems of type~\eqref{p8}. Finally, note that the constants in problems~\eqref{p8} are non-negative. Hence, an approximation guarantee for these subproblems translates to an approximation guarantee of the original problem.

\begin{cor}
Let a combinatorial optimization problem be given, for which assumption~\eqref{ass} holds. If the problem of type~\eqref{p8} is $\alpha$-approximable, then the robust two-stage combinatorial optimization problem with two-stage continuous budgeted uncertainty is $\alpha$-approximable as well.
\end{cor}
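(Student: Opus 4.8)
The plan is to use Theorem~\ref{th:p} essentially as a black box and to argue purely at the level of objective values, so that the only genuinely new ingredient is the non-negativity of the coefficients of problems of type~\eqref{p8}. By Theorem~\ref{th:p}, the optimal value $\mathrm{OPT}$ of the robust problem $\textsc{Rob}$ equals $\min_{k\in S}\mathrm{OPT}_k$, where $S$ indexes the $O(n^2)$ nominal subproblems and the $O(n^3)$ subproblems of type~\eqref{p8} produced by the decomposition, and $\mathrm{OPT}_k$ denotes the optimum of subproblem $k$. Since each subproblem of type~\eqref{p8} has non-negative coefficients $\tilde{\pmb{a}},\tilde{\pmb{b}},\tilde{\pmb{c}},\tilde{v}$, we have $\mathrm{OPT}_k\ge 0$ for all $k$; this makes the multiplicative notion of $\alpha$-approximation well-defined and is precisely the hypothesis's natural habitat.

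First I would fix the algorithmic content. Under assumption~\eqref{ass} the polyhedron $P$ is integral, so each nominal subproblem is solved exactly in polynomial time via its LP relaxation and is in particular $1$-approximable. Each subproblem of type~\eqref{p8} is $\alpha$-approximable by hypothesis. Running the appropriate routine on every $k\in S$ then produces values $V_k$ with $\mathrm{OPT}_k\le V_k\le \alpha\,\mathrm{OPT}_k$, and since $|S|=O(n^3)$ this is a polynomial number of calls. The algorithm outputs $V:=\min_{k\in S}V_k$.

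It remains to show $\mathrm{OPT}\le V\le\alpha\,\mathrm{OPT}$. For the lower bound, $V_k\ge\mathrm{OPT}_k\ge\min_{k'}\mathrm{OPT}_{k'}=\mathrm{OPT}$ for every $k$, hence $V\ge\mathrm{OPT}$. For the upper bound, pick an index $j$ with $\mathrm{OPT}_j=\mathrm{OPT}$; then $V\le V_j\le\alpha\,\mathrm{OPT}_j=\alpha\,\mathrm{OPT}$. To upgrade this value statement to an actual $\alpha$-approximate robust solution, I would invoke the structural nature of the decomposition: each subproblem solution induces a feasible first-stage vector $\pmb{x}$ whose true robust cost $\textsc{Adv}(\pmb{x})$ does not exceed that subproblem's objective on the solution, so returning the first-stage vector attaining $V=\min_k V_k$ yields a solution of robust cost at most $\alpha\,\mathrm{OPT}$.

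The main obstacle is the step $V\le\alpha\,\mathrm{OPT}$, which is exactly where non-negativity is indispensable: the inequality $V_j\le\alpha\,\mathrm{OPT}_j$ certifies an $\alpha$-approximation only because $\mathrm{OPT}_j=\mathrm{OPT}\ge 0$ together with $\alpha\ge 1$ gives $\alpha\,\mathrm{OPT}\ge\mathrm{OPT}$; were some optimal value negative, the multiplicative guarantee would move it in the wrong direction and the minimum of the individually approximated values could drop below $\mathrm{OPT}$, destroying the ratio. A secondary bookkeeping point is that some subproblems of the decomposition may be infeasible (contributing $+\infty$); these are simply discarded, which leaves the minimum unaffected as long as at least one subproblem is feasible, i.e.\ whenever the robust problem itself has a finite optimum.
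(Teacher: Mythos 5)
Your proposal is correct and follows essentially the same route as the paper: the paper treats this corollary as an immediate consequence of Theorem~\ref{th:p} together with its remark that the constants $\tilde{\pmb{a}},\tilde{\pmb{b}},\tilde{\pmb{c}},\tilde{v}$ in problems of type~\eqref{p8} are non-negative, so that solving the $O(n^2)$ nominal subproblems exactly (possible under assumption~\eqref{ass}) and $\alpha$-approximating the $O(n^3)$ subproblems of type~\eqref{p8}, then taking the minimum, preserves the ratio. Your write-up simply makes explicit the min-of-approximations argument and the weak-duality justification for recovering a first-stage solution, both of which the paper leaves implicit.
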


Using the results in \cite{goerigk2020two}, we can conclude the following consequence.

\begin{cor}
The robust two-stage counterpart with two-stage continuous budgeted uncertainty of the \textsc{spanning tree} problem is $O(\log n)$-approximable.
\end{cor}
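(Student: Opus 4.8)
The plan is to reduce the statement to the immediately preceding corollary, which guarantees that an $\alpha$-approximation for problems of type~\eqref{p8} lifts to an $\alpha$-approximation for the robust problem whenever assumption~\eqref{ass} holds. Thus it suffices to establish two facts for the \textsc{spanning tree} problem: that~\eqref{ass} is satisfied, and that the associated problem of type~\eqref{p8} admits an $O(\log n)$-approximation.

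First I would check assumption~\eqref{ass}. The feasible set $\X_\text{nom}$ consists of the incidence vectors of spanning trees of a graph, and its LP-relaxation polyhedron $P$ is the spanning tree polytope. This is a classical integral polyhedron (the base polytope of the graphic matroid; see~\cite{korte2011combinatorial}). Although $P$ needs exponentially many inequalities in its description, assumption~\eqref{ass} only requires integrality, which holds; hence both Theorem~\ref{th:p} and the approximation corollary are applicable.

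Next I would identify problem~\eqref{p8} for spanning trees with a known problem. As recorded in the discussion above, the substitution $\pmb{z}^{(1)}=\pmb{x}+\pmb{y}^{(1)}$, $\pmb{z}^{(2)}=\pmb{x}+\pmb{y}^{(2)}$ turns~\eqref{p8} into a two-stage combinatorial optimization problem over spanning trees with two scenarios under the expected value criterion. For exactly this problem, \cite{goerigk2020two} provides an $O(\log n)$-approximation. Since the coefficients $\tilde{\pmb{a}},\tilde{\pmb{b}},\tilde{\pmb{c}},\tilde{v}$ in~\eqref{p8} are non-negative, this bound transfers without loss, so~\eqref{p8} for spanning trees is $O(\log n)$-approximable. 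Feeding this into the approximation corollary yields the claim: the robust optimum equals the minimum over $O(n^2)$ nominal minimum-spanning-tree instances (solvable exactly) and $O(n^3)$ problems of type~\eqref{p8} (approximable within $O(\log n)$), and taking the minimum of the computed values preserves the factor because each computed value over-estimates the corresponding subproblem value, which is itself at least $\textsc{Rob}$.

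I expect the only genuinely delicate point to be the precise matching of~\eqref{p8} with the problem variant analyzed in \cite{goerigk2020two} — one must confirm that the two recourse vectors $\pmb{y}^{(1)},\pmb{y}^{(2)}$ correspond to the two scenarios and that the expected-value weighting is consistent with the coefficients in~\eqref{p8}. Once this identification is verified, all remaining algorithmic difficulty is delegated to that reference, and the corollary follows immediately.
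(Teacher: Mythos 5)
Your proposal is correct and follows essentially the same route as the paper: the paper derives this corollary directly from the preceding approximation-transfer corollary together with the $O(\log n)$-approximation in \cite{goerigk2020two} for the two-stage spanning tree problem with two scenarios under the expected value criterion, which is exactly problem~\eqref{p8} after the substitution $\pmb{z}^{(i)}=\pmb{x}+\pmb{y}^{(i)}$. Your additional checks (integrality of the spanning tree polytope for assumption~\eqref{ass}, non-negativity of the coefficients, and that taking the minimum over subproblem values preserves the factor) are exactly the points the paper leaves implicit.
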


\subsection{Proof of the main result}
\label{subsection:proof_main_result}

Let $P(\pmb{x})$ $ = \{\pmb y \in [0,1]^n : A(\pmb x + \pmb y) \geq \pmb b, \pmb x  + \pmb y \leq \pmb 1\}$ be the polyhedron of the LP-relaxation of $\X(\pmb{x})$.
We start with the observation that assumption (A) is equivalent to assuming that each $P(\pmb{x})$ is integral.
\begin{lemma}
Assumption~\eqref{ass} is equivalent to:
\begin{equation}
P(\pmb{x}) \text{ is an integral polyhedron for all } \pmb{x}\in\{0,1\}^n. \tag{A'}\label{avar}
\end{equation}
\end{lemma}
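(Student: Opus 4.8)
The plan is to prove the equivalence between assumption~\eqref{ass} and~\eqref{avar} by establishing both implications. The direction from~\eqref{avar} to~\eqref{ass} is immediate: taking $\pmb{x} = \pmb{0}$ in~\eqref{avar} gives that $P(\pmb{0}) = \{\pmb{y}\in[0,1]^n : A\pmb{y}\ge\pmb{b}\} = P$ is integral, which is exactly~\eqref{ass}. So the substance of the lemma lies in the forward direction, from~\eqref{ass} to~\eqref{avar}.

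For the forward direction, I would fix an arbitrary $\pmb{x}\in\{0,1\}^n$ and show $P(\pmb{x})$ is integral, assuming $P$ is integral. The key observation is that $P(\pmb{x})$ is obtained from $P$ by an affine coordinate transformation combined with fixing a face. Concretely, the map $\pmb{y}\mapsto\pmb{x}+\pmb{y}$ sends $P(\pmb{x})$ into the set $\{\pmb{z}\in[0,1]^n : A\pmb{z}\ge\pmb{b},\ \pmb{z}\ge\pmb{x}\}$, since the constraint $\pmb{x}+\pmb{y}\le\pmb{1}$ together with $\pmb{y}\ge\pmb{0}$ corresponds to $\pmb{x}\le\pmb{z}\le\pmb{1}$, and $A(\pmb{x}+\pmb{y})\ge\pmb{b}$ becomes $A\pmb{z}\ge\pmb{b}$. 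Because $\pmb{x}\in\{0,1\}^n$ is a $0/1$ vector, the extra constraints $\pmb{z}\ge\pmb{x}$ say precisely that $z_i = 1$ for every coordinate $i$ with $x_i = 1$, while imposing nothing new for coordinates with $x_i = 0$. In other words, the image of $P(\pmb{x})$ is exactly the intersection of $P$ with the face $\{z_i = 1 : x_i = 1\}$.

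The heart of the argument is therefore that a face of an integral polytope is itself integral: intersecting $P$ with the hyperplanes $z_i = 1$ (for $i$ in the support of $\pmb{x}$) yields a face of $P$, and every vertex of a face of $P$ is a vertex of $P$, hence integral by assumption~\eqref{ass}. I would need to verify that $\{z_i = 1\}$ really defines a face rather than cutting through the interior, which follows because $z_i\le 1$ is a valid inequality for $P\subseteq[0,1]^n$, so the set where it holds with equality is a (possibly empty) face. Finally, since the coordinate shift $\pmb{z} = \pmb{x}+\pmb{y}$ is a translation by the integer vector $\pmb{x}$, it preserves integrality of vertices, so pulling back from the image to $P(\pmb{x})$ shows the vertices of $P(\pmb{x})$ are integral as well.

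The main obstacle, though it is more a matter of careful bookkeeping than genuine difficulty, is handling the two-sided nature of the transformation cleanly: one must be careful that the constraint $\pmb{x}+\pmb{y}\le\pmb{1}$ does not merely bound $\pmb{y}$ but, combined with $\pmb{y}\in[0,1]^n$, interacts with the integrality of $\pmb{x}$ to force the face structure described above. I would make sure to treat the (possibly empty) case where $P(\pmb{x}) = \emptyset$ as trivially integral, and to state explicitly the standard fact that faces of integral polyhedra are integral, which is the one external ingredient the argument relies on.
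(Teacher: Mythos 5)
Your proof is correct, but it takes a genuinely different route from the paper's. The paper argues directly on extreme points: it supposes $P(\pmb{x})$ has a fractional extreme point $\pmb{y}$, translates it to $\pmb{z}=\pmb{x}+\pmb{y}\in P$, writes $\pmb{z}$ as a proper convex combination of integral extreme points $\pmb{z}^{(k)}$ of $P$, and then uses the observation that $z_i=1$ on the support of $\pmb{x}$ (forced by $\pmb{x}+\pmb{y}\le\pmb{1}$) to conclude $z^{(k)}_i=1$ for all $k$, so that each $\pmb{z}^{(k)}-\pmb{x}$ lies in $P(\pmb{x})$ and $\pmb{y}$ is a proper convex combination of points of $P(\pmb{x})$ --- a contradiction. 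You instead identify the translate $\pmb{x}+P(\pmb{x})$ exactly as the face $P\cap\{\pmb{z}: z_i=1 \ \forall i \text{ with } x_i=1\}$ of $P$ (valid since $z_i\le 1$ is a valid inequality for $P\subseteq[0,1]^n$ and intersections of faces are faces), and then invoke the standard facts that every vertex of a face of a polytope is a vertex of the polytope and that translation by an integer vector preserves integrality. The underlying insight is the same in both proofs --- binarity of $\pmb{x}$ together with $\pmb{x}+\pmb{y}\le\pmb{1}$ pins the coordinates on the support of $\pmb{x}$ to the value $1$ --- but your argument is more modular and conceptual, outsourcing the work to textbook polyhedral theory, while the paper's is self-contained and elementary, needing only the definition of an extreme point. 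Your handling of the edge cases (emptiness of $P(\pmb{x})$, and the fact that $\{z_i=1\}$ cuts a face rather than the interior) is appropriate and complete.
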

\begin{proof}
Note that $P(\pmb{0}) = P$, which means that \eqref{avar} immediately implies \eqref{ass}. We now prove the opposite direction.

We assume that \eqref{ass} holds. Let some $\pmb{y}\in P(\pmb{x})$ be a fractional extreme point of $P(\pmb{x})$ for some fixed $\pmb{x}\in\{0,1\}^n$. Consider the point $\pmb{z} = \pmb{y} + \pmb{x}$, which is contained in $P$. As $\pmb{z}$ is fractional, it is is not an extreme point of $P$. This means that there exist integral extreme points $\pmb{z}^{(1)},\ldots,\pmb{z}^{(K)}$ of $P$ and weights $\lambda_1,\ldots,\lambda_K$ with $\lambda_k \in (0,1)$, $\sum_{k\in[K]} \lambda_k = 1$ such that $\pmb{z} = \sum_{k\in[K]} \lambda_k \pmb{z}^{(k)}$. We claim that for every $k$, it holds that $\pmb{z}^{(k)}-\pmb x \in P(\pmb x)$. Indeed, it is immediately clear, that $A(\pmb x + (\pmb{z}^{(k)}-\pmb x)) \geq \pmb b$ and that $\pmb x  + (\pmb{z}^{(k)}-\pmb x) \leq \pmb 1$.
Furthermore, because $\pmb z  = \pmb x + \pmb y \leq \pmb 1$, it holds that $z_i = 1$ for all $i\in[n]$ where $x_i=1$. Therefore, we also have that $z^{(k)}_i=1$ for each such $i$ and each $k\in[K]$. This implies that $\pmb{z}^{(k)}-\pmb{x} \in [0, 1]^n$. In total, we have that $\pmb{z}^{(k)}-\pmb{x}$ is contained in $P(\pmb{x})$ for each $k=1,\dots,K$, and $\pmb{y} = \sum_{k\in[n]} \lambda_k(\pmb{z}^{(k)}-\pmb{x})$. This gives a contradiction, as $\pmb{y}$ was assumed to be an extreme point. We conclude that \eqref{avar} holds.
\end{proof}

To prove Theorem~\ref{th:p}, we first start by considering the adversarial recourse problem. For constant $\pmb x, \pmb c$ and $\pmb y$, the adversarial recourse problem $\textsc{AdvRec}(\pmb x, \pmb c, \pmb y)$ can be written as a linear program with respect to the variables $\pmb \delta$.
\begin{align*}
\textsc{AdvRec}(\pmb x, \pmb c, \pmb y)
= {\pmb{c}^t} \pmb{x} + \max \ &\sum_{i \in [n]}(\underline{d}_i + (\overline{d}_i-\underline{d}_i)\delta_i)y_i \\
\text{s.t. }
  &\sum_{i\in[n]} \delta_i \le \Gamma - \sum_{i\in[n]} \frac{c_i - \underline{c}_i}{\overline{c}_i - \underline{c}_i}\\  
  &0 \leq \delta_i \le 1 &\forall i\in [n]\\
\end{align*}
By introducing dual variables $\pi^{(2)}$ and $\rho_i^{(2)}$, this linear program can be dualized, yielding an equivalent formulation as a minimization problem. Integrating this minimization problem into the recourse problem using weak and strong duality, we reach the following problem formulation:
\begin{subequations}
\label{p1}
\begin{align}
 \textsc{Rec}(\pmb{x},\pmb{c}) = {\pmb c}^t \pmb x + \min\ & \sum_{i\in[n]} \underline{d}_i y_i + \left( \Gamma - \sum_{i\in[n]} \frac{c_i - \underline{c}_i}{\overline{c}_i - \underline{c}_i} \right) \pi^{(2)} + \sum_{i\in[n]} \rho^{(2)} _i \\
\text{s.t. } & A(\pmb{x}+\pmb{y}) \ge \pmb{b} \\
& \pmb{x}+\pmb{y} \le \pmb{1}  \\
& \pi^{(2)} + \rho^{(2)}_i \ge (\overline{d}_i - \underline{d}_i) y_i & \forall i\in[n] \label{eq:rho_i_2}\\
& y_i\in\{0,1\} & \forall i\in[n] \\
& \pi^{(2)} \ge 0 \\
& \rho^{(2)}_i \ge 0 & \forall i\in[n]. \label{eq:rho_i_2_dash}
\end{align}
\end{subequations}
Consider the case that variables $\pmb{y}$ are fixed. Then the following is a well-known argument (see, e.g., \cite{bertsimas2003robust}). The optimization problem has an optimal solution such that 
\[\rho_i^{(2)} = [(\overline{d}_i - \underline{d}_i)y_i - \pi^{(2)}]_+ =  [\overline{d}_i - \underline{d}_i - \pi^{(2)}]_+ y_i,\] where  $[x]_+$ denotes the positive part $\max\{x,0\}$. The first part of the equality follows from the fact that in an optimal solution $\rho^{(2)}_i$ is as small as possible while satisfying inequalities (\ref{eq:rho_i_2}) and (\ref{eq:rho_i_2_dash}). The second part follows from the fact that $y_i$ is binary.
For the remainder of this section, let 
\[\Pi := \{ \overline{d}_i - \underline{d}_i : i\in[n]\} \cup \{0\}. \] 
Note that with respect to the variable $\pi^{(2)}$, if all other variables are fixed, then the objective function is piecewise linear in $\pi^{(2)}$. Therefore the optimum is obtained at a point where the slope of this piecewise linear function changes. Because the set $\Pi$ is exactly the set of all these points, we have that $\pi^{(2)} \in \Pi$. Let $K$ be the cardinality of $\Pi$ and $\pi^{(2)}_1, \dots \pi^{(2)}_K$ be  all the elements of $\Pi$, that is 
\[\Pi = \{ \pi^{(2)}_1, \ldots, \pi^{(2)}_K\}. \]
Using this notation, problem~\eqref{p1} is equivalent to
\begin{equation}\label{p2}
{\pmb c}^t \pmb x + \min_{k\in[K]} \min_{\pmb{y}\in\X(\pmb{x})} 
\sum_{i\in[n]} (\underline{d}_i + [\overline{d}_i - \underline{d}_i - \pi^{(2)}_k]_+) y_i + \left( \Gamma - \sum_{i\in[n]} \frac{c_i - \underline{c}_i}{\overline{c}_i - \underline{c}_i} \right) \pi^{(2)}_k.
\end{equation}

\begin{lemma}
Problem~\eqref{p2} is an equivalent formulation of the recourse problem of the robust two-stage combinatorial optimization problem with two-stage continuous budgeted uncertainty.
\end{lemma}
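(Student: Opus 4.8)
The plan is to prove the lemma by establishing a chain of two equalities, $\textsc{Rec}(\pmb{x},\pmb{c}) = \eqref{p1} = \eqref{p2}$, where the first equality rests on strong LP duality applied to the innermost adversarial stage, and the second equality merely packages the two reduction arguments already sketched above (the closed form for $\rho^{(2)}_i$ and the restriction $\pi^{(2)}\in\Pi$) into a single statement.

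For the first equality, I would fix $\pmb{x}$, $\pmb{c}$, and $\pmb{y}\in\X(\pmb{x})$ and regard $\textsc{AdvRec}(\pmb{x},\pmb{c},\pmb{y})$ as the displayed linear program in the variables $\pmb{\delta}$. The crucial preliminary observation is that this LP is both feasible and bounded: since $\pmb{c}\in\cU$, the already-used budget $\sum_{i\in[n]}(c_i-\underline{c}_i)/(\overline{c}_i-\underline{c}_i)$ is at most $\Gamma$, so the right-hand side $\Gamma-\sum_{i\in[n]}(c_i-\underline{c}_i)/(\overline{c}_i-\underline{c}_i)$ is nonnegative and $\pmb{\delta}=\pmb{0}$ is feasible, while the box constraints $0\le\delta_i\le 1$ keep the objective bounded. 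Hence strong duality applies and the inner maximum equals the dual minimum in the variables $\pi^{(2)},\rho^{(2)}_i$. Substituting this dual minimum for the inner maximum and merging it with the outer $\min_{\pmb{y}\in\X(\pmb{x})}$ into a single joint minimization yields exactly \eqref{p1}; the constraints $A(\pmb{x}+\pmb{y})\ge\pmb{b}$ and $\pmb{x}+\pmb{y}\le\pmb{1}$ are simply the description of $\X(\pmb{x})$.

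For the second equality, I would fix $\pmb{y}$ and first eliminate the $\rho^{(2)}_i$: since each $\rho^{(2)}_i$ appears only in \eqref{eq:rho_i_2} and \eqref{eq:rho_i_2_dash} with a positive objective coefficient, its optimal value is $[(\overline{d}_i-\underline{d}_i)y_i-\pi^{(2)}]_+ = [\overline{d}_i-\underline{d}_i-\pi^{(2)}]_+\,y_i$, the last equality using $y_i\in\{0,1\}$, exactly as recorded above. After this substitution the objective, viewed as a function of $\pi^{(2)}\ge 0$ alone, is the sum of the linear term $(\Gamma-\sum_{i\in[n]}(c_i-\underline{c}_i)/(\overline{c}_i-\underline{c}_i))\pi^{(2)}$ of nonnegative slope and the convex, nonincreasing piecewise-linear terms $[\overline{d}_i-\underline{d}_i-\pi^{(2)}]_+\,y_i$; it is therefore convex and piecewise linear in $\pi^{(2)}$, with breakpoints only at the values $\overline{d}_i-\underline{d}_i$ and at the boundary point $\pi^{(2)}=0$. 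Its minimum over $[0,\infty)$ is thus attained at one of these breakpoints, i.e.\ at some $\pi^{(2)}_k\in\Pi$, so the continuous minimization over $\pi^{(2)}$ collapses to the discrete $\min_{k\in[K]}$. Collecting the surviving terms reproduces the objective of \eqref{p2}, and taking the minimum back over $\pmb{y}\in\X(\pmb{x})$ completes the identification.

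The step needing the most care is the legitimacy of strong duality, which hinges on the feasibility argument above and in particular on the remaining budget being nonnegative for \emph{every} $\pmb{c}\in\cU$. A secondary subtlety is making the piecewise-linear argument airtight, specifically verifying that the minimum is genuinely attained (not merely approached) even when the slope of the linear term vanishes, in which case the objective is constant beyond the largest breakpoint and the infimum is still realized at a point of $\Pi$. Since the individual reductions have already been justified in the surrounding discussion, the proof of the lemma essentially amounts to recording that these steps compose into the claimed equivalence.
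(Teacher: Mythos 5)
Your proposal is correct and follows essentially the same route as the paper: strong LP duality on the innermost adversarial stage yields problem~\eqref{p1}, and then the closed-form elimination of $\rho^{(2)}_i$ together with the piecewise-linear breakpoint argument restricts $\pi^{(2)}$ to $\Pi$, giving problem~\eqref{p2}. Your additional checks (nonnegativity of the remaining budget for feasibility, boundedness via the box constraints, and attainment of the minimum when the linear slope vanishes) are careful refinements of the same argument rather than a different approach.
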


Note that for each $k\in[K]$, problems~\eqref{p2} are problems where assumption~\eqref{avar} can be applied. In particular, minimizing a linear objective over $\X(\pmb x)$ is equivalent to minimizing a linear objective over its relaxation $P(\pmb x)$. We thus dualize the inner minimization problem, combine the resulting $K$ dual problems into a single problem, and integrate the decision variables and the constraints of the adversarial stage. This yields the following reformulation of the adversarial problem:
\begin{subequations}
\label{p3}
\begin{align}
\textsc{Adv}(\pmb{x},\pmb{c}) = \max\ & t + \sum_{i\in[n]} (\underline{c}_i + (\overline{c}_i-\underline{c}_i)\delta_i)x_i  \\
\text{s.t. } & t \le 
(\pmb{b}-A\pmb{x})^t \pmb{\alpha}^{(k)} + (\pmb{x}-\pmb{1})^t\pmb{\beta}^{(k)}
+(\Gamma - \sum_{i\in[n]} \delta_i) \pi^{(2)}_k\hspace{-2mm} & \forall k\in[K] \label{p3c1}\\
&  (A^t\pmb{\alpha}^{(k)})_i - \beta^{(k)}_i \le \underline{d}_i + [\overline{d}_i - \underline{d}_i - \pi^{(2)}_k]_+ & \forall i\in[n], k \in [K] \label{p3c2}\\
& \sum_{i\in[n]} \delta_i \le \Gamma \label{p3c3}\\
& \delta_i \in [0,1] & \forall i\in[n] \label{p3c4}\\
& \pmb{\alpha}^{(k)} \in\mathbb{R}^m_+ & \forall k\in[K] \\
& \pmb{\beta}^{(k)} \in\mathbb{R}^n_+ &\forall k\in[K].
\end{align}
\end{subequations}
Here, variables $\pmb{\alpha}^{(k)}$ are duals for the constraints $A\pmb{y} \ge \pmb{b} - A\pmb{x}$, while variables $\pmb{\beta}^{(k)}$ are duals for constraints $\pmb{y} \le \pmb{1}-\pmb{x}$. The variable $t$ and the inequalities containing it are used to replace the outer minimum in problem~\eqref{p2}. Variables $\pmb{\delta}$ are introduced to model where the adversary distributes the uncertainty of the first stage. Note that we can replace $(c_i - \underline{c}_i)/(\overline{c}_i - \underline{c}_i)$ by $\delta_i$.

\begin{lemma}
If assumption~\eqref{ass} holds, then problem~\eqref{p3} is an equivalent formulation of the adversarial problem of the robust two-stage combinatorial optimization problem with two-stage continuous budgeted uncertainty. 
\end{lemma}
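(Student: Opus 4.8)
The plan is to obtain \eqref{p3} by building directly on the reformulation \eqref{p2} of the recourse problem, which by the previous lemma equals $\textsc{Rec}(\pmb x,\pmb c)$. Since by definition $\textsc{Adv}(\pmb x)=\max_{\pmb c\in\cU}\textsc{Rec}(\pmb x,\pmb c)$, the first step is simply to substitute the value of \eqref{p2} for the recourse term, so that
\[ \textsc{Adv}(\pmb x)=\max_{\pmb c\in\cU}\Bigg[\pmb c^t\pmb x+\min_{k\in[K]}\ \min_{\pmb y\in\X(\pmb x)}\sum_{i\in[n]}\big(\underline d_i+[\overline d_i-\underline d_i-\pi^{(2)}_k]_+\big)y_i+\Big(\Gamma-\sum_{i\in[n]}\tfrac{c_i-\underline c_i}{\overline c_i-\underline c_i}\Big)\pi^{(2)}_k\Bigg]. \]
The entire remaining task is to rewrite the right-hand side as the single maximization \eqref{p3}. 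Everything here is a maximization except for the two innermost operators $\min_{k}$ and $\min_{\pmb y}$, and the proof will dispose of each of these in turn.

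Two technical engines do the work. First I would handle the innermost $\min_{\pmb y\in\X(\pmb x)}$ for each fixed $k$: invoking the equivalence of \eqref{ass} and \eqref{avar}, the integrality of $P(\pmb x)$ lets me replace the integral minimization over $\X(\pmb x)$ by minimization of the same linear objective over the relaxation $P(\pmb x)=\{\pmb y:A\pmb y\ge \pmb b-A\pmb x,\ \pmb y\le \pmb 1-\pmb x,\ \pmb y\ge \pmb 0\}$ (the box constraint $\pmb y\le\pmb 1$ being implied by $\pmb x\ge\pmb 0$). Assigning dual multipliers $\pmb\alpha^{(k)}\ge\pmb 0$ to $A\pmb y\ge\pmb b-A\pmb x$ and $\pmb\beta^{(k)}\ge\pmb 0$ to $\pmb y\le\pmb 1-\pmb x$, LP strong duality turns this minimum into the maximum of $(\pmb b-A\pmb x)^t\pmb\alpha^{(k)}+(\pmb x-\pmb 1)^t\pmb\beta^{(k)}$ subject to exactly the constraint \eqref{p3c2}. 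Second, I would eliminate $\min_{k\in[K]}$ by the standard epigraph trick $\min_k a_k=\max\{t:\,t\le a_k\ \forall k\}$, introducing the scalar variable $t$ and one inequality per $k$. Finally, reparametrizing the first-stage scenario via $\delta_i=(c_i-\underline c_i)/(\overline c_i-\underline c_i)$ identifies $\max_{\pmb c\in\cU}$ with maximization over $\pmb\delta$ subject to \eqref{p3c3}--\eqref{p3c4}, rewrites $\pmb c^t\pmb x=\sum_i(\underline c_i+(\overline c_i-\underline c_i)\delta_i)x_i$ as the non-$t$ part of the objective, and replaces $\sum_i(c_i-\underline c_i)/(\overline c_i-\underline c_i)$ by $\sum_i\delta_i$ in the coefficient of $\pi^{(2)}_k$, which is precisely the term coupling $t$ to $\pmb\delta$ in \eqref{p3c1}.

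The main obstacle will be justifying that the nested structure $\max_{\pmb c}\big[\,\min_k\max_{\pmb\alpha^{(k)},\pmb\beta^{(k)}}(\cdot)\,\big]$ collapses into one joint maximization over $t,\pmb\delta,\pmb\alpha^{(k)},\pmb\beta^{(k)}$ without changing the optimal value. The key point I would stress is that after the LP dualization of the inner minimum, the innermost minimization over $\pmb y$ has disappeared and \emph{every} surviving operator is a maximization, while the only remaining minimization, $\min_k$, ranges over a finite index set and is absorbed by the constraints $t\le(\cdot)_k$ for all $k$. Concretely, requiring $t\le(\pmb b-A\pmb x)^t\pmb\alpha^{(k)}+(\pmb x-\pmb 1)^t\pmb\beta^{(k)}+(\Gamma-\sum_i\delta_i)\pi^{(2)}_k$ with the duals themselves treated as free maximization variables reproduces $\min_k\max_{\pmb\alpha^{(k)},\pmb\beta^{(k)}}$: any feasible $t$ is bounded by $\min_k$ of the chosen dual objectives, and choosing each $(\pmb\alpha^{(k)},\pmb\beta^{(k)})$ dual-optimal attains $\min_k\max_{\text{dual}}$. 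Because no maximum ever has to be interchanged with a minimum in the wrong order, this argument requires only LP strong duality for the inner problem (valid thanks to integrality under \eqref{ass}) together with the epigraph reformulation, and no minimax theorem is needed. The edge case $\X(\pmb x)=\emptyset$ is consistent on both sides, since the primal inner problem is then infeasible, making $\textsc{Rec}$ and hence $\textsc{Adv}(\pmb x)$ equal to $\infty$, while the corresponding dual is unbounded and drives $t$, and thus the objective of \eqref{p3}, to $\infty$ as well. Collecting the objective and all constraints produced in these steps yields exactly \eqref{p3}, completing the proof.
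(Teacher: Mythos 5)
Your proposal is correct and follows essentially the same route as the paper: apply the integrality assumption (via its equivalent form \eqref{avar}) to pass from $\X(\pmb x)$ to $P(\pmb x)$, dualize the inner minimization with multipliers $\pmb\alpha^{(k)},\pmb\beta^{(k)}$, absorb the finite $\min_k$ through the epigraph variable $t$, and substitute $\delta_i=(c_i-\underline c_i)/(\overline c_i-\underline c_i)$ to merge everything into one maximization. Your added justification that the $K$ dual blocks are independent (so no minimax interchange is needed) and your treatment of the case $\X(\pmb x)=\emptyset$ simply make explicit points the paper leaves implicit.
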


Problem~\eqref{p3} is a linear program. We can hence dualize this problem, introducing dual variables $z^{(k)}, y_i^{(k)}, \pi^{(1)}$, and $\rho^{(1)}_i$ for the constraints \eqref{p3c1}, \eqref{p3c2}, \eqref{p3c3}, and \eqref{p3c4}, respectively. Furthermore, we add first-stage decision variables $\pmb{x}\in\{0,1\}^n$ to reach the following compact formulation of the two-stage robust problem with two-stage uncertainty \textsc{Rob}:
\begin{subequations}
\label{p4}
\begin{align}
\min\ & \sum_{i \in[n]} \underline{c}_i x_i + \sum_{k\in[K]} \sum_{i\in[n]} (\underline{d}_i + [\overline{d}_i - \underline{d}_i - \pi^{(2)}_k]_+) y^{(k)}_i \nonumber\\
& + \Gamma \sum_{k\in[K]} \pi^{(2)}_k z^{(k)} + \Gamma \pi^{(1)} + \sum_{i\in[n]} \rho^{(1)}_i \hspace{-3cm}\\
\text{s.t. } 
& A\pmb{y}^{(k)} \ge (\pmb{b}-A\pmb{x}) z^{(k)} & \forall k\in[K] \\
& (1-x_i)z^{(k)} \ge y^{(k)}_i & \forall i\in[n],k\in[K] \label{p4c2}\\
& \sum_{k\in[K]} \pi^{(2)}_k z^{(k)} + \pi^{(1)} + \rho^{(1)}_i \ge (\overline{c}_i-\underline{c}_i)x_i & \forall i\in[n] \\
& \sum_{k\in[K]} z^{(k)} = 1 \\
& x_i \in\{0,1\} & \forall i\in[n] \\
& y^{(k)}_i \ge 0 & \forall i\in[n], k\in[K] \\
& z^{(k)} \ge 0 & \forall k\in[K] \\
& \pi^{(1)} \ge 0 \\
& \rho^{(1)}_i \ge 0 & \forall i\in[n].
\end{align}
\end{subequations}
For the next step, a normalization of variable values will turn out to be helpful. Note that in any feasible solution, the inequality $y^{(k)}_i \leq z^{(k)}$ holds due to constraints \eqref{p4c2}. We introduce new variables $\tilde{y}^{(k)}_i$ such that the equation $y^{(k)}_i = z^{(k)}\tilde{y}^{(k)}_i$ holds and the new variables $\tilde{y}^{(k)}_i$ are bounded by 1. If $z^{(k)} \neq 0$, then the value of $\tilde{y}^{(k)}_i$ is uniquely defined by the above equation and we have $\tilde{y}^{(k)}_i \in [0,1]$.
If $z^{(k)}=0$, then the above equation trivially holds and we accept any choice for $\tilde{y}^{(k)}_i\in[0,1]$. We can hence substitute any occurrence of $y^{(k)}_i$ in problem \eqref{p4} by $z^{(k)}\tilde{y}^{(k)}_i$ in order to obtain the following equivalent problem:
\begin{subequations}
\label{p5}
\begin{align}
\min\ & \sum_{i \in[n]} \underline{c}_i x_i + \sum_{k\in[K]} \sum_{i\in[n]} (\underline{d}_i + [\overline{d}_i - \underline{d}_i - \pi^{(2)}_k]_+) z^{(k)}\tilde{y}^{(k)}_i \nonumber \\ 
&+ \Gamma \sum_{k\in[K]} \pi^{(2)}_k z^{(k)} + \Gamma \pi^{(1)} + \sum_{i\in[n]} \rho^{(1)}_i \hspace{-3cm} \\
\text{s.t. } 
& A(\pmb{x}+\tilde{\pmb{y}}^{(k)}) \ge \pmb{b} & \forall k\in[K] \\
& \pmb{x} + \tilde{\pmb{y}}^{(k)} \le \pmb{1}  & \forall k\in[K] \\
& \sum_{k\in[K]} \pi^{(2)}_k z^{(k)} + \pi^{(1)} + \rho^{(1)}_i \ge (\overline{c}_i-\underline{c}_i)x_i & \forall i\in[n] \\
& \sum_{k\in[K]} z^{(k)} = 1 \\
& x_i \in\{0,1\} & \forall i\in[n] \\
& \tilde{y}^{(k)}_i \in[0,1] & \forall k\in[K], i\in[n] \\
& z^{(k)} \ge 0 & \forall k\in[K] \\
& \pi^{(1)} \ge 0 \\
& \rho^{(1)}_i \ge 0 & \forall i\in[n].
\end{align}
\end{subequations}

\begin{lemma}
\label{lemma:robust-reformulation}
If assumption~\eqref{ass} holds, then problem~\eqref{p5} is an equivalent formulation of the robust two-stage combinatorial optimization problem with two-stage continuous budgeted uncertainty. 
\end{lemma}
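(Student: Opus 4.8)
The plan is to obtain the final equivalence by chaining the reformulations already established, supplying only the two steps not yet justified in full: the strong-duality passage from~\eqref{p3} to~\eqref{p4}, and the normalization from~\eqref{p4} to~\eqref{p5}. I would first invoke the preceding lemma, which states that under assumption~\eqref{ass} problem~\eqref{p3} is an equivalent formulation of the adversarial problem $\textsc{Adv}(\pmb x)$ for each fixed $\pmb x$. Since $\textsc{Rob}=\min_{\pmb x\in\X}\textsc{Adv}(\pmb x)$ and $\X=\{0,1\}^n$, it then remains to show that minimizing the objective of~\eqref{p4} over $\pmb x\in\{0,1\}^n$ equals $\min_{\pmb x}\textsc{Adv}(\pmb x)$, and that~\eqref{p5} and~\eqref{p4} have the same optimal value.

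For the duality step I would fix $\pmb x$ and regard~\eqref{p3} as an ordinary linear program in the variables $(t,\pmb\alpha^{(k)},\pmb\beta^{(k)},\pmb\delta)$, the numbers $\pi^{(2)}_k\in\Pi$ being constants. Dualizing it---assigning $z^{(k)}$ to~\eqref{p3c1}, $y^{(k)}_i$ to~\eqref{p3c2}, $\pi^{(1)}$ to~\eqref{p3c3} and $\rho^{(1)}_i$ to~\eqref{p3c4}---reproduces exactly the objective and constraints of~\eqref{p4} with $\pmb x$ held fixed. Problem~\eqref{p3} is always feasible (e.g.\ $\pmb\alpha^{(k)}=\pmb\beta^{(k)}=\pmb 0$, $\pmb\delta=\pmb 0$), and it is bounded precisely when $\X(\pmb x)\neq\emptyset$; strong duality then equates the dual optimum with $\textsc{Adv}(\pmb x)$. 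When $\X(\pmb x)=\emptyset$ we have $\textsc{Adv}(\pmb x)=\infty$ by the convention $\min\emptyset=\infty$, whereas~\eqref{p3} is unbounded and its dual infeasible, so such $\pmb x$ affect neither minimum. Splitting the single minimization of~\eqref{p4} as $\min_{\pmb x}$ of the inner dual then shows that the optimal value of~\eqref{p4} equals $\textsc{Rob}$.

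To prove that~\eqref{p4} and~\eqref{p5} are equivalent, I would exhibit value-preserving maps in both directions induced by $y^{(k)}_i=z^{(k)}\tilde y^{(k)}_i$. Starting from a feasible point of~\eqref{p5}, putting $y^{(k)}_i:=z^{(k)}\tilde y^{(k)}_i$ yields a feasible point of~\eqref{p4} of equal objective: multiplying $A(\pmb x+\tilde{\pmb y}^{(k)})\ge\pmb b$ and $\tilde y^{(k)}_i\le 1-x_i$ by $z^{(k)}\ge 0$ recovers the two families of constraints of~\eqref{p4}, and the only term of the objective containing $y^{(k)}$, namely $\sum_{k,i}(\underline{d}_i+[\overline{d}_i-\underline{d}_i-\pi^{(2)}_k]_+)y^{(k)}_i$, turns into the corresponding term of~\eqref{p5}. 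Conversely, from a feasible point of~\eqref{p4}, constraint~\eqref{p4c2} together with $y^{(k)}_i,z^{(k)}\ge 0$ and $x_i\in\{0,1\}$ forces $0\le y^{(k)}_i\le z^{(k)}$, so $\tilde y^{(k)}_i:=y^{(k)}_i/z^{(k)}\in[0,1]$ is well defined and reverses the computation whenever $z^{(k)}>0$.

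The main obstacle is the degenerate case $z^{(k)}=0$ in this last direction: then $y^{(k)}$ is forced to $\pmb 0$ and contributes nothing to~\eqref{p4}, yet~\eqref{p5} still demands $\tilde{\pmb y}^{(k)}\in P(\pmb x)$. I would settle this by observing that every finite feasible point of~\eqref{p4} has $\X(\pmb x)\neq\emptyset$---otherwise $\textsc{Adv}(\pmb x)=\infty$ and~\eqref{p4} is infeasible for that $\pmb x$---and that assumption~\eqref{ass}, through its equivalent form~\eqref{avar}, guarantees $P(\pmb x)=\operatorname{conv}(\X(\pmb x))\neq\emptyset$ for exactly those $\pmb x$. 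Hence when $z^{(k)}=0$ any $\tilde{\pmb y}^{(k)}\in P(\pmb x)$ may be chosen; it enters the objective multiplied by $z^{(k)}=0$ and is coupled to no other variable, leaving the value unchanged. Combining the three steps shows that~\eqref{p5} has optimal value $\textsc{Rob}$, proving the lemma.
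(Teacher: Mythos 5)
Your proposal is correct and follows essentially the same route as the paper: dualize problem~\eqref{p3} with the dual variables $z^{(k)}, y^{(k)}_i, \pi^{(1)}, \rho^{(1)}_i$, reattach the first-stage variables $\pmb{x}$ to obtain~\eqref{p4}, and then normalize via the substitution $y^{(k)}_i = z^{(k)}\tilde{y}^{(k)}_i$ to reach~\eqref{p5}. If anything, you are more careful than the paper, which glosses over the boundedness/infeasibility issue when $\X(\pmb{x})=\emptyset$ and over the fact that for $z^{(k)}=0$ one must pick $\tilde{\pmb{y}}^{(k)}\in P(\pmb{x})$ (not an arbitrary point of $[0,1]^n$) to satisfy the constraints of~\eqref{p5}.
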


As each variable $\rho^{(1)}_i$ occurs in only one constraint, we can write
\[\rho^{(1)}_i = [(\overline{c}_i  - \underline{c}_i)x_i - \pi^{(1)} - \sum_{k\in[K]} \pi^{(2)}_kz^{(k)}]_+\]
 due to an argument that is similar to another argument above. In the following lemma, we now want to prove that all optimal solutions to problem~\eqref{p5} have a special structure. Indeed, let us assume that all variables in problem~\eqref{p5} except $\pmb{z}$ are fixed. The remaining problem in $\pmb{z}$ is of the following form:
\begin{align*}
\min\ & \sum_{k\in[K]} (\Gamma\pi^{(2)}_k + \sum_{i\in[n]} (\underline{d}_i + [\overline{d}_i - \underline{d}_i - \pi^{(2)}_k]_+)\tilde{y}^{(k)}_i) z^{(k)} \\
& + \sum_{i\in[n]} [(\overline{c}_i  - \underline{c}_i)x_i - \pi^{(1)} - \sum_{k\in[K]} \pi^{(2)}_kz^{(k)}]_+ \\
\text{s.t. } & \sum_{k\in[K]} z^{(k)} = 1 \\
& z^{(k)} \ge 0 & \forall k\in[K].
\end{align*}

\begin{lemma}\label{lemmaz}
Let any $\pmb{a},\pmb{c}\in\mathbb{R}^k$ and $\pmb{b},\pmb{d}\in\mathbb{R}^n$ be given. For an optimization problem of the type
\begin{align*}
\min\ & \sum_{k\in[K]} a_k z^{(k)} + \sum_{i\in[n]} d_i [b_i - \sum_{k\in[K]} c_kz^{(k)}]_+ \\
\text{s.t. } & \sum_{k\in[K]} z^{(k)} = 1 \\
& z^{(k)} \ge 0 & \forall k\in[K],
\end{align*}
there is an optimal solution $\pmb{z}$ with
\begin{itemize}
\item $z^{(k_1)}=1$ for one $k_1$ and $z^{(k)}=0$ for all $k\neq k_1$, or
\item $z^{(k_1)}=(b_{i'}-c_{k_2})/(c_{k_1}-c_{k_2})\ge 0$, $z^{(k_2)}=(c_{k_1}-b_{i'})/(c_{k_1}-c_{k_2})\ge 0$ for some $k_1,k_2\in[K]$ and $i'\in[n]$, and $z^{(k)} = 0$ for all other $k \neq k_1, k_2$.
\end{itemize}
\end{lemma}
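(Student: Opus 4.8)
The plan is to collapse the $K$-dimensional problem over the simplex to a one-dimensional problem in the single scalar $w := \sum_{k\in[K]} c_k z^{(k)}$. The crucial structural observation is that the objective depends on $\pmb z$ only through the two linear functionals $A(\pmb z) := \sum_{k\in[K]} a_k z^{(k)}$ and $w(\pmb z) := \sum_{k\in[K]} c_k z^{(k)}$: it equals $A(\pmb z) + h\big(w(\pmb z)\big)$, where $h(w) := \sum_{i\in[n]} d_i [b_i - w]_+$ is piecewise linear in the single variable $w$ with breakpoints contained in $\{b_i : i\in[n]\}$. Over the simplex $\Delta = \{\pmb z \ge \pmb 0 : \sum_k z^{(k)} = 1\}$ the quantity $w$ ranges exactly over the interval $[c_{\min}, c_{\max}]$ with $c_{\min} = \min_k c_k$ and $c_{\max} = \max_k c_k$.

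Next I would introduce, for each target value $w$, the inner linear program $\phi(w) := \min\{A(\pmb z) : \pmb z\in\Delta,\ w(\pmb z) = w\}$, so that the whole problem equals $\min_{w\in[c_{\min},c_{\max}]} \psi(w)$ with $\psi(w) := \phi(w) + h(w)$. Viewing $\phi$ as the lower boundary of the planar polygon $Q = \{(w(\pmb z), A(\pmb z)) : \pmb z\in\Delta\}$, which is the image of $\Delta$ under a linear map and hence has all its vertices among the $K$ points $(c_k, a_k)$, one sees that $\phi$ is convex and piecewise linear with all breakpoints among $\{c_k : k\in[K]\}$. Therefore $\psi$ is piecewise linear on the compact interval $[c_{\min},c_{\max}]$ with breakpoints in $\{c_k\}\cup\{b_i\}$, and so its minimum is attained either at an interval endpoint or at such a breakpoint; in every case one may take the minimizer $w^\ast$ to lie in $\{c_k : k\in[K]\}\cup\{b_i : i\in[n]\}$.

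It then remains to recover an optimal $\pmb z$ of the claimed form from $w^\ast$. If $w^\ast = c_{k_1}$ (this also covers the two endpoints, and since it is a breakpoint of $\phi$ it is a vertex of the lower boundary of $Q$), then $\phi(c_{k_1}) = a_{k_1}$ and the single vertex $\pmb z = \pmb e_{k_1}$ is optimal, giving the first alternative. If instead $w^\ast = b_{i'}$, I would take a vertex of the slice $S = \{\pmb z\in\Delta : w(\pmb z) = b_{i'}\}$ that attains $\phi(b_{i'})$. Counting active constraints at such a vertex in $\mathbb{R}^K$ (the two equalities $\sum_k z^{(k)} = 1$ and $\sum_k c_k z^{(k)} = b_{i'}$, together with $K-2$ tight nonnegativities) forces its support to have size at most two, say $\{k_1,k_2\}$; solving $z^{(k_1)} + z^{(k_2)} = 1$ and $c_{k_1} z^{(k_1)} + c_{k_2} z^{(k_2)} = b_{i'}$ yields precisely the formulas $z^{(k_1)} = (b_{i'}-c_{k_2})/(c_{k_1}-c_{k_2})$ and $z^{(k_2)} = (c_{k_1}-b_{i'})/(c_{k_1}-c_{k_2})$ of the second alternative, with nonnegativity inherited from feasibility; if $c_{k_1}=c_{k_2}$ or the support is a singleton, this collapses back to the first alternative.

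The main obstacle I anticipate is the bookkeeping in the reduction step, namely verifying cleanly that $\phi$ is piecewise linear with breakpoints only among the $c_k$ (equivalently, that the lower boundary of $Q$ has vertices only among the images $(c_k,a_k)$ of the simplex vertices) and that the overall minimizer may be taken at a breakpoint of $\psi$ or an interval endpoint rather than at an arbitrary $c_k$. One must also treat the degenerate configurations uniformly: all $c_k$ equal (so $Q$ degenerates to a vertical segment and $w$ is constant), coincidences $c_k = b_i$, and singleton supports in the slice, ensuring each optimum falls under one of the two stated alternatives. I would emphasize that no convexity of $h$, and hence no sign restriction on $\pmb d$, is needed: the argument relies only on piecewise-linearity over a compact interval together with the vertex-support count of the slice $S$.
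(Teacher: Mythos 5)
Your proposal is correct, but it reaches the lemma by a noticeably different route than the paper. The paper never forms the projection to the $(w,A)$-plane: it sorts the distinct values of $\pmb{b}$, guesses the segment $[b'_{i'},b'_{i'+1}]$ containing the optimal $w=\sum_{k} c_k z^{(k)}$, notes that on that segment the objective is linear in $\pmb{z}$, and then applies basic-feasible-solution counting to the resulting LP (after adding two slack variables: $K+2$ variables, $3$ constraints, hence at most three nonzero variables in an optimal basis solution); the case analysis of which variables are nonzero yields exactly the two alternatives, with the two-nonzero case pinning $\sum_k c_k z^{(k)}$ to a segment endpoint $b_{i'}$. Your proof instead performs an explicit univariate reduction $\min_w \phi(w)+h(w)$, identifies $\phi$ with the lower boundary of the polygon $Q=\mathrm{conv}\{(c_k,a_k):k\in[K]\}$, and recovers solutions from extreme points of $Q$ (first alternative) or from vertices of the slice $\{\pmb{z}\ge \pmb{0}: \sum_k z^{(k)}=1,\ \sum_k c_k z^{(k)}=b_{i'}\}$ (second alternative). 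Both proofs exploit the same two structural facts---piecewise linearity in the scalar $w$ with breakpoints among the $b_i$, and small support of extreme points---and your slice-vertex count is in substance the paper's basis count; what your version buys is geometric transparency about why $\{c_k\}\cup\{b_i\}$ are the only relevant locations of $w$, and an explicit remark that no sign condition on $\pmb{d}$ (no convexity of $h$) is needed, which the paper's proof also implicitly enjoys but does not highlight.

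One spot needs tightening. The parenthetical ``if $w^\ast = c_{k_1}$ \dots since it is a breakpoint of $\phi$ it is a vertex of the lower boundary of $Q$'' conflates ``$w^\ast$ equals some $c_{k_1}$'' with ``$w^\ast$ is a breakpoint of $\phi$'': for an arbitrary $k_1$ with $c_{k_1}=w^\ast$, the point $(c_{k_1},a_{k_1})$ may lie strictly above the lower boundary, in which case $\phi(c_{k_1})=a_{k_1}$ fails and $\pmb{e}_{k_1}$ is not optimal. The case split should be stated over the provenance of the breakpoint, not its value: either $w^\ast$ is an interval endpoint or a breakpoint of $\phi$---then $(w^\ast,\phi(w^\ast))$ is an extreme point of $Q$, hence equal to $(c_{k_1},a_{k_1})$ for \emph{some} $k_1$, giving the first alternative---or $w^\ast=b_{i'}$ for some $i'$, giving the second via the slice argument. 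With that rewording the argument is complete.
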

\begin{proof}
Let $\pmb{b}'=(b'_1,\ldots,b'_{\Delta})^t$ contain all unique values of $\pmb{b}$ in sorted order from smallest to largest. We write $b'_0 = -\infty$, $b'_{\Delta+1} = +\infty$. In each segment $\sum_{k\in[K]} c_k z^{(k)} \in [b'_{i'},b'_{i'+1}]$ for fixed $i'\in\{0,\ldots,\Delta\}$, 
the objective becomes linear. Hence, we guess the segment $i'$ of the optimal solution and find
\begin{align*}
\min\ & \sum_{k\in[K]} a_k z^{(k)} + \sum_{i : b_i > b'_{i'}} d_i (b_i - \sum_{k\in[K]} c_k z^{(k)}) \\
\text{s.t. } & \sum_{k\in[K]} z^{(k)} = 1 \\
& b'_{i'} \le \sum_{k\in[K]} c_k z^{(k)} \le b'_{i'+1} \\
& z^{(k)} \ge 0 & \forall k\in[K].
\end{align*}
We first consider the case that $i'\in\{1,\ldots,\Delta-1\}$. Adding two slack variables, we have a linear program with $K+2$ variables and 3 constraints. This means that in an optimal basis solution, there are at most three non-zero variables. We distinguish the following cases:
\begin{enumerate}
\item Two slack variables and one $z^{(k_1)}$ are non-zero. In this case, $z^{(k_1)} = 1$ as claimed.
\item One slack variable and two variables $z^{(k_1)}$ and $z^{(k_2)}$ are non-zero. This implies that the chosen optimal basis solution satisfies the following system of equations:
\begin{align*}
z^{(k_1)} + z^{(k_2)} &= 1 \\
c_{k_1} z^{(k_1)} + c_{k_2} z^{(k_2)} &= b'_{i'} \text{ or } b'_{i'+1}
\end{align*}
As this is a basis solution, we know that the columns $(1,c_{k_1})^t$ and $(1,c_{k_2})^t$ are linearly independent. Hence, $c_{k_1} \neq c_{k_2}$. Without loss of generality, we assume $c_{k_1} > c_{k_2}$, which gives $z^{(k_1)}=(b_{i'}-c_{k_2})/(c_{k_1}-c_{k_2})> 0$ and $z^{(k_2)}=(c_{k_1}-b_{i'})/(c_{k_1}-c_{k_2})> 0$.
\item If both slack variables are zero, we get $b'_{i'} = b'_{i'+1}$, which contradicts the assumption that $\pmb{b}'$-values are pairwise distinct.

\end{enumerate}
Finally, if $i' = 0$ or $i'=\Delta$, then the linear program has $K+1$ variables and 2 constraints. Again, the first two cases of the above case distinction are possible.
We conclude that there always exists an optimal solution to $\pmb{z}$ as claimed.
\end{proof}

Using Lemma~\ref{lemmaz}, we can rewrite problem~\eqref{p5} by enumerating possible values of $\pmb{z}$. Let us first assume that $z^{(k)}=1$ for some $k\in[K]$. Each remaining problem is then as follows:
\begin{align*}
\min\ & \sum_{i \in[n]} \underline{c}_i x_i + \sum_{i\in[n]} (\underline{d}_i + [\overline{d}_i - \underline{d}_i - \pi^{(2)}_k]_+) \tilde{y}_i + \Gamma \pi^{(2)}_k + \Gamma \pi^{(1)} + \sum_{i\in[n]} [\overline{c}_i-\underline{c}_i - \pi^{(1)} - \pi^{(2)}_k]_+ x_i \hspace{-2cm} \\
\text{s.t. } & A(\pmb{x}+\tilde{\pmb{y}}) \ge \pmb{b}  \\
& \pmb{x} + \tilde{\pmb{y}} \le \pmb{1} \\
& x_i \in\{0,1\} & \forall i\in[n] \\
& \tilde{y}_i \in[0,1] & \forall i\in[n] \\
& \pi^{(1)} \ge 0.
\end{align*}
Notice that the cost function is piecewise linear in variable $\pi^{(1)}$. We hence conclude that there is an optimal solution where $\pi^{(1)}\in\{\overline{c}_i - \underline{c}_i - \pi^{(2)}_k : i\in[n]\} \cup \{0\}$. Enumerating these $O(n)$ possible values for $\pi^{(1)}$, each subproblem is then of the form
\begin{subequations}
\label{p6}
\begin{align}
\min\ & \tilde{\pmb{c}}^t\pmb{x} + \tilde{\pmb{d}}^t \tilde{\pmb{y}} + \text{const.}\\
\text{s.t. } & A(\pmb{x}+\tilde{\pmb{y}}) \ge \pmb{b}  \\
& \pmb{x} + \tilde{\pmb{y}} \le \pmb{1} \\
& x_i \in\{0,1\} & \forall i\in[n] \\
& \tilde{y}_i \in[0,1] & \forall i\in[n] .
\end{align}
\end{subequations}
Note that for each fixed $\pmb x$, assumption \eqref{avar} can be applied, because the remaining problem is to optimize a linear objective over $P(\pmb x)$. We can therefore assume that $\tilde{y}_i \in \{0, 1\}$ as well. This leaves for every $i \in [n]$ with $(x_i, \tilde{y}_i) \neq (0, 0)$ only two choices: Either $x_i = 1$ and $\tilde{y}_i=0$, or $x_i = 0$ and $\tilde{y}_i=1$. For each $i\in[n]$, one can determine which choice is cheaper (with respect to the objective). Hence, problem~\eqref{p6} can be reduced to a nominal problem. In total, we have $O(n^{2})$ different nominal problems, as there are $O(n)$ choices for both $k$ and $\pi^{(1)}$.

In summary, in the first case of the main theorem, we have to solve the following nominal problems for each $k \in [K]$ and $\pi^{(1)}\in\{\overline{c}_i - \underline{c}_i - \pi^{(2)}_k : i\in[n]\} \cup \{0\}$: 
\begin{subequations}
\label{p6-dash}
\begin{align}
\min\ & \sum_{i \in [n]} \min\{c_i, d_i\}z_i  + M\\
\text{s.t. } & A \pmb z \ge \pmb{b}  \\
& z_i \in\{0,1\} & \forall i\in[n] \\
 \text{with constants }
&M = \Gamma \pi^{(2)}_k + \Gamma \pi^{(1)} \nonumber \\
&c_i = \underline{c}_i + [\overline{c}_i-\underline{c}_i - \pi^{(1)} - \pi^{(2)}_k]_+ \nonumber \\
&d_i = \underline{d}_i + [\overline{d}_i - \underline{d}_i - \pi^{(2)}_k]_+ \nonumber 
\end{align}
\end{subequations}

Let us now assume that two of the $\pmb{z}$-variables are active. Using Lemma~\ref{lemmaz}, we have
\begin{align*}
z^{(k_1)} &= ( (\overline{c}_{i'} - \underline{c}_{i'})x_{i'} - \pi^{(1)} - \pi^{(2)}_{k_2}) / (\pi^{(2)}_{k_1} - \pi^{(2)}_{k_2}) \\
z^{(k_2)} &= ( \pi^{(2)}_{k_1} - (\overline{c}_{i'} - \underline{c}_{i'})x_{i'} + \pi^{(1)}) / (\pi^{(2)}_{k_1} - \pi^{(2)}_{k_2}) \\
\pi^{(2)}_{k_1}z^{(k_1)} + \pi^{(2)}_{k_2} z^{(k_2)}& = (\overline{c}_{i'} - \underline{c}_{i'})x_{i'} - \pi^{(1)},
\end{align*}
where we only need to consider the case where both the variables $z^{(k_1)}$ and $z^{(k_2)}$ are active, that is we only need to consider combinations with $0 < z^{(k_1)},z^{(k_2)} < 1$.
Using these values in problem~\eqref{p5}, we find the following subproblems:
\begin{subequations}
\label{p7}
\begin{align}
\min\ & \sum_{i\in[n]} \underline{c}_i x_i \nonumber\\ 
& + \sum_{i\in[n]} (\underline{d}_i + [\overline{d}_i - \underline{d}_i - \pi^{(2)}_{k_1}]_+) \frac{(\overline{c}_{i'} - \underline{c}_{i'})x_{i'} - \pi^{(1)} - \pi^{(2)}_{k_2}}{\pi^{(2)}_{k_1} - \pi^{(2)}_{k_2}} \tilde{y}^{(k_1)}_i \hspace{-1cm} \nonumber\\
& + \sum_{i\in[n]} (\underline{d}_i + [\overline{d}_i - \underline{d}_i - \pi^{(2)}_{k_2}]_+) \frac{\pi^{(2)}_{k_1} - (\overline{c}_{i'} - \underline{c}_{i'})x_{i'} + \pi^{(1)}}{\pi^{(2)}_{k_1} - \pi^{(2)}_{k_2}} \tilde{y}^{(k_2)}_i \hspace{-1cm} \nonumber\\
&+ \Gamma (( \overline{c}_{i'} - \underline{c}_{i'})x_{i'} - \pi^{(1)} ) + \Gamma \pi^{(1)} + \sum_{i\in[n]} \rho^{(1)}_i \\
\text{s.t. } & 
A(\pmb{x}+\tilde{\pmb{y}}^{(k)}) \ge \pmb{b}  & \forall k\in\{k_1,k_2\} \\
& \pmb{x} + \tilde{\pmb{y}}^{(k)} \le \pmb{1} & \forall k\in\{k_1,k_2\} \\
&( \overline{c}_{i'} - \underline{c}_{i'})x_{i'} - \pi^{(1)} + \pi^{(1)} + \rho^{(1)}_i \ge (\overline{c}_i - \underline{c}_i) x_i & \forall i\in[n] \label{p7c}\\
& x_i \in \{0,1\} & \forall i\in[n] \\
& \tilde{y}^{(k)}_i \in [0,1] & \forall i\in[n], k\in\{k_1,k_2\} \\
& (\overline{c}_{i'} - \underline{c}_{i'})x_{i'} - \pi^{(1)} - \pi^{(2)}_{k_2} \geq 0 \label{p7g}\\
& \pi^{(2)}_{k_1} - (\overline{c}_{i'} - \underline{c}_{i'})x_{i'} + \pi^{(1)} \geq 0 \label{p7h}\\
& \pi^{(1)} \ge 0 \label{p7i}\\
& \rho^{(1)}_i \ge 0 & \forall i\in[n].
\end{align}
\end{subequations}
 Note that constraints \eqref{p7g} and \eqref{p7h} are consequences of the constraints $z^{(k_1)} \geq 0$ and $z^{(k_2)} \geq 0$ from problem \eqref{p5}.  Notice how $\pi^{(1)}$ cancels out in constraint~\eqref{p7c}. 

We claim that in an optimal solution, we have $x_{i'} = 1$. Indeed, assume that $x_{i'} = 0$.  Then, constraint \eqref{p7g} implies that $\pi^{(1)} \leq - \pi^{(2)}_{k_2}$. But note that $\pi^{(2)}_{k_2} \in \Pi$ and the set $\Pi$ contains only non-negative numbers by definition. Because we also have $\pi^{(1)} \geq 0$, this actually implies that $\pi^{(2)}_{k_2} = \pi^{(1)} = 0$. But inserting these values into the equality for $z^{(k_1)}$, we see that $z^{(k_1)} = 0$, which is a contradiction to the assumption that both $z^{(k_1)}$ and  $z^{(k_2)}$ are active, that is $z^{(k_1)}, z^{(k_2)} > 0$.
We conclude that $x_{i'}=1$. Furthermore, because the variable $\pi^{(1)}$ appears only in constraints \eqref{p7g}, \eqref{p7h} and \eqref{p7i}, we conclude that there is an optimal $\pi^{(1)}$ that is either equal to 
$\overline{c}_{i'} - \underline{c}_{i'} - \pi^{(2)}_{k_2}$, or equal to  
$\overline{c}_{i'} - \underline{c}_{i'} - \pi^{(2)}_{k_1}$, or equal to 0. Finally, we have
\[ \rho^{(1)}_i = [(\overline{c}_i - \underline{c}_i)x_i - \overline{c}_{i'} - \underline{c}_{i'}]_+ = [(\overline{c}_i - \underline{c}_i) - \overline{c}_{i'} - \underline{c}_{i'}]_+ x_i. \]
Incorporating all these simplifications into problem \eqref{p7}, one sees that problem~\eqref{p7} becomes one of type~\eqref{p8}, with continuous $\pmb{y}^{(1)},\pmb{y}^{(2)}$. 
 For fixed $\pmb{x}\in\{0,1\}^n$, the remaining problem decomposes into two independent problems in variables $\pmb{y}^{(1)},\pmb{y}^{(2)}$. Due to assumption~\eqref{avar}, we can thus consider $\pmb{y}^{(1)},\pmb{y}^{(2)}\in\{0,1\}^n$ instead of $\pmb{y}^{(1)},\pmb{y}^{(2)}\in [0,1]^n$. This means that we have transformed the problem into a problem of type \eqref{p8}.
 
In summary, in the second case of the main theorem, we have to solve one instance of the following problem of type \eqref{p8} for each tuple $(k_1, k_2, i', \pi^{(1)})$ with the property that $k_1 \in [K]$ and $k_2 \in [K], k_1 \neq k_2$ and $i' \in [n]$ and $\pi^{(1)} \in \{\overline{c}_{i'} - \underline{c}_{i'} - \pi^{(2)}_{k_2}, \overline{c}_{i'} - \underline{c}_{i'} - \pi^{(2)}_{k_1}, 0\}$ and $\pi^{(2)}_{k_1} - \overline{c}_{i'} + \underline{c}_{i'} + \pi^{(1)} \geq 0$ and $\overline{c}_{i'} - \underline{c}_{i'} - \pi^{(1)} - \pi^{(2)}_{k_2} \geq 0$ and $\pi^{(1)} \geq 0$. Note that there are $O(n^3)$ many tuples with these properties.
\begin{subequations}
\label{p10}
\begin{align}
\min\ & \tilde{\pmb{a}}^t\pmb{x} + \tilde{\pmb{b}}^t\pmb{y}^{(1)} + \tilde{\pmb{c}}^t\pmb{y}^{(2)} + \tilde{v} \\
\text{s.t. } 
& A(\pmb{x}+\pmb{y}^{(1)}) \ge \pmb{b}  \\
& A(\pmb{x}+\pmb{y}^{(2)}) \ge \pmb{b}  \\
& \pmb{x} + \pmb{y}^{(1)} \le \pmb{1} \\
& \pmb{x} + \pmb{y}^{(2)} \le \pmb{1} \\
& x_i \in\{0,1\} & \forall i\in[n] \\
& y^{(1)}_i,y^{(2)}_i \in \{0,1\} & \forall i\in[n]\\
\text{with constants }& \tilde{a}_i = \underline{c}_i + [(\overline{c}_i - \underline{c}_i) - \overline{c}_{i'} - \underline{c}_{i'}]_+ \nonumber \\
& \tilde{b}_i = (\underline{d}_i + [\overline{d}_i - \underline{d}_i - \pi^{(2)}_{k_1}]_+) \frac{(\overline{c}_{i'} - \underline{c}_{i'})x_{i'} - \pi^{(1)} - \pi^{(2)}_{k_2}}{\pi^{(2)}_{k_1} - \pi^{(2)}_{k_2}} \nonumber \\
& \tilde{c}_i = (\underline{d}_i + [\overline{d}_i - \underline{d}_i - \pi^{(2)}_{k_2}]_+) \frac{\pi^{(2)}_{k_1} - (\overline{c}_{i'} - \underline{c}_{i'})x_{i'} + \pi^{(1)}}{\pi^{(2)}_{k_1} - \pi^{(2)}_{k_2}} \nonumber \\
&\tilde{v} =  \Gamma (( \overline{c}_{i'} - \underline{c}_{i'})x_{i'} - \pi^{(1)} ) + \Gamma \pi^{(1)}. \nonumber 
\end{align}
\end{subequations}
We have thus completed the proof of Theorem~\ref{th:p}. We summarize our detailed findings in the following lemma, which implies the main theorem:
\begin{lemma}
Let a robust two-stage combinatorial optimization problem with two-stage continuous budgeted uncertainty be given. If assumption~\eqref{ass} holds, then a solution to the robust problem can be obtained by taking the minimum of the objective values of
\begin{itemize}
\item an instance of the nominal problem~\eqref{p6-dash} for each of the $O(n^2)$ values of $(k, \pi^{(1)})$, as described there, and
\item  an instance of the problem~\eqref{p10} (which is a problem of type~\eqref{p8}) for each of the $O(n^3)$ values of $(k_1, k_2, i', \pi^{(1)})$, as described there.
\end{itemize}
\end{lemma}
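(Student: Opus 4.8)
The plan is to assemble the reformulations and the case distinction already developed in this section into a single statement. The starting point is Lemma~\ref{lemma:robust-reformulation}, which (under assumption~\eqref{ass}) identifies the robust problem \textsc{Rob} with problem~\eqref{p5}. It therefore suffices to show that an optimal solution of~\eqref{p5} can be recovered as the best solution among the two stated families: the nominal problems~\eqref{p6-dash} and the problems~\eqref{p10} of type~\eqref{p8}.

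First I would isolate the variables $\pmb{z}$. Fixing every other variable of~\eqref{p5} and substituting $\rho^{(1)}_i = [(\overline{c}_i-\underline{c}_i)x_i - \pi^{(1)} - \sum_{k\in[K]}\pi^{(2)}_k z^{(k)}]_+$, the residual problem in $\pmb{z}$ is exactly of the form treated in Lemma~\ref{lemmaz}. Hence there is a global optimum of~\eqref{p5} in which $\pmb{z}$ is either a unit vector $z^{(k)}=1$, or is supported on two coordinates $k_1,k_2$ with the explicit values given by Lemma~\ref{lemmaz}. These two possibilities are precisely the two cases of the construction, so it remains to enumerate them and verify the counts.

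In the unit-vector case, substituting $z^{(k)}=1$ into~\eqref{p5} yields a cost function that is piecewise linear in $\pi^{(1)}$, so an optimal $\pi^{(1)}$ lies in $\{\overline{c}_i-\underline{c}_i-\pi^{(2)}_k : i\in[n]\}\cup\{0\}$. For each of the $O(n)$ choices of $k$ and each of the $O(n)$ choices of $\pi^{(1)}$, the residual problem is the integer program~\eqref{p6}; applying assumption~\eqref{avar} lets us relax $\tilde{\pmb{y}}$ to $[0,1]^n$ without changing the optimum, after which each coordinate with $(x_i,\tilde{y}_i)\neq(0,0)$ offers only the two alternatives $x_i=1,\tilde{y}_i=0$ or $x_i=0,\tilde{y}_i=1$, collapsing~\eqref{p6} to the nominal problem~\eqref{p6-dash}. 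This produces the claimed $O(n^2)$ nominal instances. In the two-coordinate case, substituting the values of $z^{(k_1)},z^{(k_2)}$ from Lemma~\ref{lemmaz} into~\eqref{p5} gives problem~\eqref{p7}; the argument already carried out forces $x_{i'}=1$, restricts an optimal $\pi^{(1)}$ to the three values $\{\overline{c}_{i'}-\underline{c}_{i'}-\pi^{(2)}_{k_2},\ \overline{c}_{i'}-\underline{c}_{i'}-\pi^{(2)}_{k_1},\ 0\}$, and determines $\rho^{(1)}_i$ explicitly, turning~\eqref{p7} into the instance~\eqref{p10} of type~\eqref{p8}; here again assumption~\eqref{avar} justifies treating $\pmb{y}^{(1)},\pmb{y}^{(2)}$ as binary. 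Ranging over the $O(n)$ choices of each of $k_1,k_2,i'$ and the $O(1)$ choices of $\pi^{(1)}$ gives the claimed $O(n^3)$ instances.

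Finally, taking the minimum of the objective values over all these subproblems recovers the optimum of~\eqref{p5}, and hence of \textsc{Rob}. The main subtlety to verify carefully is the very first reduction: one must argue that restricting $\pmb{z}$ to the structured form of Lemma~\ref{lemmaz} preserves \emph{global} optimality of~\eqref{p5}, that is, that for any optimal solution the remaining variables can be frozen and $\pmb{z}$ re-optimized within that structure without raising the objective. The remaining steps are the repeated appeals to piecewise linearity in $\pi^{(1)}$ and to integrality via~\eqref{avar}, both of which were established above and need only be invoked in turn.
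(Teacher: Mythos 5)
Your proposal is correct and follows essentially the same route as the paper: it invokes Lemma~\ref{lemma:robust-reformulation} to identify \textsc{Rob} with problem~\eqref{p5}, uses the $\rho^{(1)}_i$-substitution and Lemma~\ref{lemmaz} to restrict $\pmb{z}$ to the one- or two-coordinate structure, and then reuses the section's case analysis (piecewise linearity in $\pi^{(1)}$, assumption~\eqref{avar} for integrality) to arrive at the $O(n^2)$ instances of~\eqref{p6-dash} and the $O(n^3)$ instances of~\eqref{p10}. The ``subtlety'' you flag---that freezing the remaining variables of a global optimum and re-optimizing $\pmb{z}$ within the structured form preserves global optimality---is exactly how the paper's argument proceeds, so nothing further is needed.
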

\begin{proof}
Due to Lemma~\ref{lemma:robust-reformulation}, problem~\eqref{p5} is an equivalent reformulation of the robust problem. Due to Lemma~\ref{lemmaz}, all solutions to problem~\eqref{p5} have the property that there is either exactly one value, or exactly two values of $k$ such that $z^{(k)} \neq 0$. If exactly one $z^{(k)}$ is non-zero, then the arguments above imply that the optimum solution can be found by taking the minimum of $O(n^2)$ instances of problem~\eqref{p6-dash}. If exactly two $z^{(k)}$ are non-zero, then the arguments above imply that the optimum solution can be found by taking the minimum of $O(n^3)$ instances of problem~\eqref{p10}.
\end{proof}

\subsection{A variant of continuous budgeted uncertainty}
\label{subsection:uvar}

We now consider a variant of the budgeted uncertainty sets with the difference that the budget bounds the absolute increase of the cost. That is, we consider sets
\[
    \cU^{\text{var}} = \left\{ \pmb{c}\in\mathbb{R}^n_+ : 
    c_i = \underline{c}_i + \delta_i,\ 
    \delta_i\in[0,\overline{c}_i - \underbar{c}_i]\ \forall i\in[n],\ 
    \sum_{i\in[n]} \delta_i \le \Gamma
     \right\}
\]
for the adversarial first stage and sets
\[
    \cU^{\text{var}}(\pmb{c}) = \left\{ \pmb{d}\in\mathbb{R}^n_+ : 
    d_i = \underline{d}_i + \delta_i,\  
    \delta_i\in[0,\overline{d}_i-\underline{d}_i]\ \forall i\in[n],\ 
    \sum_{i\in[n]} \delta_i \le \Gamma - \sum_{i\in[n]} (c_i - \underline{c}_i)
     \right\}
\]
for the adversarial second stage. Sets of this structure are sometimes used as well, see, e.g. \cite{nasrabadi2013robust,chassein2018recoverable}. In the following it does not make a difference if $\pmb{\delta}$ is continuous or discrete. In the latter case, we assume without loss of generality that $\overline{\pmb{c}}-\underline{\pmb{c}}$ and $\overline{\pmb{d}}-\underline{\pmb{d}}$ are integer. 

Similar to classic single stage min-max robust optimization problems with uncertainty set $\cU^{\text{var}}$, we show that it suffices to solve two instances of nominal optimization problems to determine an optimal solution for the two-stage problem.

\begin{theorem}\label{th:variant}
The robust two-stage combinatorial optimization problem with two-stage budgeted uncertainty variant $\cU^{\text{var}}$ can be decomposed into two problems of nominal type.
\end{theorem}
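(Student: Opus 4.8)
The plan is to collapse the four-level nesting $\min_{\pmb x}\max_{\pmb c}\min_{\pmb y}\max_{\pmb d}$ into a single joint minimization over feasible pairs $(\pmb x,\pmb y)$, and then to split the resulting expression into two nominal problems using the idea behind the classical two-problem decomposition for single-stage $\cU^{\text{var}}$. Write $C(\pmb x):=\sum_{i:x_i=1}(\overline{c}_i-\underline{c}_i)$ and $D(\pmb y):=\sum_{i:y_i=1}(\overline{d}_i-\underline{d}_i)$ for the total cost ranges available to the adversary on the first- and second-stage solutions. The essential feature of $\cU^{\text{var}}$ is that every unit of budget raises the objective by exactly one unit (the marginal rate is uniform), so for a fixed pair $(\pmb x,\pmb y)$ the largest total increase an adversary with combined budget $\Gamma$ can inflict is $\min(\Gamma,\,C(\pmb x)+D(\pmb y))$.

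First I would prove the key lemma that, for every fixed first-stage solution $\pmb x$, the order of the adversarial moves is irrelevant:
\[ \max_{\pmb c\in\cU^{\text{var}}}\min_{\pmb y\in\X(\pmb x)}\max_{\pmb d\in\cU^{\text{var}}(\pmb c)} \pmb c^t\pmb x + \pmb d^t\pmb y = \min_{\pmb y\in\X(\pmb x)}\left[\sum_{i\in[n]}\underline{c}_i x_i + \sum_{i\in[n]}\underline{d}_i y_i + \min\bigl(\Gamma,\, C(\pmb x)+D(\pmb y)\bigr)\right]. \]
For the inequality ``$\le$'' I would note that for an arbitrary first-stage choice with increments $\delta_i$ using total budget $\beta_1=\sum_i\delta_i$ and direct gain $G_1=\sum_i\delta_i x_i\le\min(\beta_1,C(\pmb x))$, the inner maximization over $\pmb d$ contributes exactly $\min(\Gamma-\beta_1,D(\pmb y))$; the two bounds $G_1\le\beta_1$ and $G_1\le C(\pmb x)$ then give $G_1+\min(\Gamma-\beta_1,D(\pmb y))\le\min(\Gamma,C(\pmb x)+D(\pmb y))$ for every $\pmb y$, so the decision maker can always keep the cost at or below the right-hand side. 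For ``$\ge$'' I would let the adversary commit in the first stage to raising the $x$-items maximally, spending $\min(\Gamma,C(\pmb x))$; against this fixed strategy the best second-stage response adds $\min([\Gamma-C(\pmb x)]_+,D(\pmb y))$, and the elementary identity $\min(\Gamma,C(\pmb x))+\min([\Gamma-C(\pmb x)]_+,D(\pmb y))=\min(\Gamma,C(\pmb x)+D(\pmb y))$ shows that the decision maker faces exactly the bracketed expression on the right-hand side for every response $\pmb y$, so the adversary guarantees its minimum over $\pmb y$.

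Once this lemma is in place, taking the outer minimum over $\pmb x$ turns $\textsc{Rob}$ into the minimum, over all feasible pairs, of the baseline cost plus $\min(\Gamma,\,C(\pmb x)+D(\pmb y))$. Using $\alpha+\min(\Gamma,S)=\min(\alpha+\Gamma,\alpha+S)$ with $\alpha$ the baseline cost and $S=C(\pmb x)+D(\pmb y)$, every feasible pair contributes the minimum of the two linear expressions $\sum_i\overline{c}_i x_i+\sum_i\overline{d}_i y_i$ (full increase absorbed) and $\Gamma+\sum_i\underline{c}_i x_i+\sum_i\underline{d}_i y_i$. I would then swap the minimizations via $\min_{\pmb x,\pmb y}\min(g,h)=\min(\min_{\pmb x,\pmb y}g,\,\min_{\pmb x,\pmb y}h)$ to obtain two independent minimizations over feasible pairs. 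Finally, since $\pmb x+\pmb y\le\pmb 1$ forces at most one of $x_i,y_i$ to be active, substituting $\pmb z=\pmb x+\pmb y\in\X_\text{nom}$ and choosing for each active coordinate the cheaper option reduces the first problem to $\min_{\pmb z\in\X_\text{nom}}\sum_i\min(\overline{c}_i,\overline{d}_i)z_i$ and the second to $\Gamma+\min_{\pmb z\in\X_\text{nom}}\sum_i\min(\underline{c}_i,\underline{d}_i)z_i$, two problems of nominal type whose minimum equals $\textsc{Rob}$.

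The main obstacle is the first step: proving that the interleaved minimization between the two adversarial stages does not help the decision maker, so that the sequential game has the same value as a simultaneous distribution of the whole budget. This is precisely where the uniform rate-1 structure of $\cU^{\text{var}}$ enters; the argument would fail for the sets $\cU,\cU(\pmb c)$ of Section~\ref{sec:cont}, where the marginal value of a unit of budget on item $i$ is $\overline{c}_i-\underline{c}_i$ rather than a constant, which is why that case needs the far more involved decomposition of Theorem~\ref{th:p}. I would also remark that the whole argument never invokes assumption~\eqref{ass} and is insensitive to whether $\pmb\delta$ is continuous or integral, since $\min(\Gamma,\,C(\pmb x)+D(\pmb y))$ is attained exactly in both cases when $\Gamma$ and the ranges are integral.
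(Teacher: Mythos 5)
Your proposal is correct and follows essentially the same route as the paper: the paper likewise argues that the adversary optimally commits its full budget to the first stage (yielding cost $\min\{\underline{\pmb{c}}^t\pmb{x} + \underline{\pmb{d}}^t\pmb{y}+\Gamma,\ \overline{\pmb{c}}^t\pmb{x} + \overline{\pmb{d}}^t\pmb{y}\}$, which is exactly your baseline-plus-$\min(\Gamma, C(\pmb x)+D(\pmb y))$ expression), and then reduces to the same two nominal problems with costs $\min\{\underline{c}_i,\underline{d}_i\}$ and $\min\{\overline{c}_i,\overline{d}_i\}$. Your explicit two-sided inequality argument for the key lemma is simply a more formal rendering of the paper's monotonicity-in-$\gamma$ observation.
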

\begin{proof}
Let any first-stage solution $\pmb{x}\in\X$ be given. If the adversary decides to invest a budget $\gamma\le \Gamma$ on these items, the resulting costs are $\min\{ \underline{\pmb{c}}^t\pmb{x} + \gamma, \overline{\pmb{c}}^t\pmb{x}\}$. Hence, we can assume that $\gamma \le (\overline{\pmb{c}}-\underline{\pmb{c}})^t\pmb{x}$, which leads to costs $\pmb{c}^t\pmb{x} + \gamma$.
Now consider any second-stage solution $\pmb{y}\in\X(\pmb{x})$. The remaining budget $\Gamma-\gamma$ leads to second-stage costs $\min\{\underline{\pmb{d}}^t\pmb{y}+\Gamma-\gamma, \overline{\pmb{d}}^t\pmb{y}\}$. 
In total, the costs are thus
\begin{align*}
& \underline{\pmb{c}}^t\pmb{x} + \gamma + \min\{\underline{\pmb{d}}^t\pmb{y}+\Gamma-\gamma,\ \overline{\pmb{d}}^t\pmb{y}\} \\
= &\min\{ \underline{\pmb{c}}^t\pmb{x} + \gamma + \underline{\pmb{d}}^t\pmb{y}+\Gamma-\gamma,\  
\underline{\pmb{c}}^t\pmb{x} + \gamma + \overline{\pmb{d}}^t\pmb{y}
\} \\
= &\min\{ \underline{\pmb{c}}^t\pmb{x} + \underline{\pmb{d}}^t\pmb{y}+\Gamma,\ 
\underline{\pmb{c}}^t\pmb{x} + \gamma + \overline{\pmb{d}}^t\pmb{y}
\}
\end{align*}
Hence, for the adversary it is optimal to spend as much as possible in the first stage, using $\gamma = \min\{\Gamma,(\overline{\pmb{c}}-\underline{\pmb{c}})^t\pmb{x}\}$. Intuitively,  due to the fact that all cost increases to the items contribute equally to the used budget of the adversary, it does not make sense for the adversary to safe any usable budget to attack the first stage solution for the second stage. Thus, the robust problem is equivalent to
\[ \min\left\{ \Gamma + \min_{\pmb{x} \in \X} \left( \underline{\pmb{c}}^t \pmb{x} + \min_{\pmb{y} \in \X(\pmb{x})} \underline{\pmb{d}}^t \pmb{y}\right),\  \min_{\pmb{x} \in \X} \left( \bar{\pmb{c}}^t \pmb{x} + \min_{\pmb{y} \in \X(\pmb{x})} \bar{\pmb{d}}^t \pmb{y} \right) \right\}, \]
which can be rewritten as two nominal problems
\[ \min\left\{ \Gamma + \min_{\pmb{x} \in \X(\pmb{0})} \pmb{v}^t \pmb{x},  \min_{\pmb{x} \in \X(\pmb{0})} \pmb{w}^t \pmb{x} \right\} \]
with $v_i = \min\{\underline{c}_i, \underline{d}_i\}$ and $w_i = \min\{\bar{c}_i, \bar{d}_i\}$. Recall that $\X(\pmb{0})$ denotes the set of feasible solutions for the nominal problem.
\end{proof}
Note that Theorem~\ref{th:variant} does not require assumption~\eqref{ass}.

\section{Two-stage discrete budgeted uncertainty}
\label{sec:disc}

In this section we first derive compact mixed-integer programming models for the case of two-stage discrete budgeted uncertainty and then show that, contrary to the continuous case, a wide range of combinatorial optimization problems become NP-hard to solve. We finally consider a special case, where first-stage and second-stage costs coincide, and show that such problems become easier to solve. Like in the last section, we only consider optimization problems where assumption (\ref{ass}) holds.

\subsection{Models}

We now consider budgeted uncertainty sets as defined in Section~\ref{sec:cont} with the difference that the adversarial variables $\delta_i$ determining the distribution of the uncertainty budget need to be discrete, i.e., we have $\delta_i\in\{0,1\}$ instead of $\delta_i\in[0,1]$. While for one-stage problems, this does not have any impact on the problem, it is well known to make a difference for two-stage problems (see, e.g., \cite{chassein2018recoverable,goerigk2020recoverable}). 

Discrete variables in the inner adversarial recourse problem $\textsc{AdvRec}(\pmb{x},\pmb{c},\pmb{y})$ can be relaxed without changing the optimal objective value, which means that we find the same recourse problem $\textsc{Rec}(\pmb{x},\pmb{c})$ as before in~\eqref{p1}. We once again define the set  
\[\Pi := \{ \overline{d}_i - \underline{d}_i : i\in[n]\} \cup \{0\}\] 
of possible dual variable values, and let $\Pi = \{ \pi^{(2)}_1, \ldots, \pi^{(2)}_K\}$ to find the following adversarial problem $\textsc{Adv}(\pmb{x})$:
\begin{subequations}
\label{pd1}
\begin{align}
\max\ & t + \sum_{i\in[n]} (\underline{c}_i + (\overline{c}_i-\underline{c}_i)\delta_i)x_i  \\
\text{s.t. } & t \le 
(\pmb{b}-A\pmb{x})^t \pmb{\alpha}^{(k)} + (\pmb{x}-\pmb{1})^t\pmb{\beta}^{(k)}
+(\Gamma - \sum_{i\in[n]} \delta_i) \pi^{(2)}_k & \forall k\in[K] \\
&  (A^t\pmb{\alpha}^{(k)})_i - \beta^{(k)}_i \le \underline{d}_i + [\overline{d}_i - \underline{d}_i - \pi^{(2)}_k]_+ & \forall i\in[n], k \in [K]\\
& \sum_{i\in[n]} \delta_i \le \Gamma \\
& \delta_i \in \{0,1\} & \forall i\in[n] \\
& \pmb{\alpha}^{(k)} \in\mathbb{R}^m_+ & \forall k\in[K] \\
& \pmb{\beta}^{(k)} \in\mathbb{R}^n_+ &\forall k\in[K] 
\end{align}
\end{subequations}
The only difference to problem~\eqref{p3} lies in the variables $\pmb{\delta}$, which are now discrete instead of continuous. Let $[\Gamma]_0 := \{0,1,\ldots,\Gamma\}$. We guess the value of $\gamma:=\sum_{i\in[n]}\delta_i$, where we have that $\gamma \in [\Gamma]_0$. Problem~\eqref{pd1} hence decomposes to:
\begin{subequations}
\label{pd2}
\begin{align}
\max_{\gamma\in[\Gamma]_0} \max\ & t + \sum_{i\in[n]} (\underline{c}_i + (\overline{c}_i-\underline{c}_i)\delta_i)x_i  \\
\text{s.t. } & 
t \le (\pmb{b}-A\pmb{x})^t \pmb{\alpha}^{(k)} + (\pmb{x}-\pmb{1})^t\pmb{\beta}^{(k)}
+(\Gamma - \gamma) \pi^{(2)}_k & \forall k\in[K] \label{pd2con} \\
&  (A^t\pmb{\alpha}^{(k)})_i - \beta^{(k)}_i \le \underline{d}_i + [\overline{d}_i - \underline{d}_i - \pi^{(2)}_k]_+ & \forall i\in[n], k \in [K] \label{pd2c} \\
& \sum_{i\in[n]} \delta_i = \gamma \label{pd2d} \\
& \delta_i \in \{0,1\} & \forall i\in[n] \label{pd2e} \\
& \pmb{\alpha}^{(k)} \in\mathbb{R}^m_+ & \forall k\in[K] \\
& \pmb{\beta}^{(k)} \in\mathbb{R}^n_+ &\forall k\in[K]. 
\end{align}
\end{subequations}
Note that the constraints~\eqref{pd2con} do not depend on variables $\pmb{\delta}$ anymore. This means that variables $\pmb{\delta}$ and $\pmb{\alpha}^{(k)},\pmb{\beta}^{(k)}$ have become decoupled and we can relax problem~\eqref{pd2} for each choice of $\gamma$ without affecting the objective value by considering $\delta_i \in [0,1]$ instead of $\delta_i \in \{0,1\}$.  We dualize each subproblem, and introduce dual variables $z^{(k,\gamma)}, y^{(k,\gamma)}_i, \pi^{(\gamma)},$ and $\rho^{(\gamma)}_i$ for constraints \eqref{pd2con}, \eqref{pd2c}, \eqref{pd2d}, and \eqref{pd2e}, respectively. Combining the resulting problems, and integrating the variables and constraints of the first stage, we find the following non-linear compact formulation of the two-stage robust problem with two-stage discrete budgeted uncertainty $\textsc{Rob}$:
\begin{subequations}
\label{pd3}
\begin{align}
\min\ & \sum_{i\in[n]} \underline{c}_i x_i + t \\
\text{s.t. } & t \ge \sum_{k\in[K]} (\Gamma-\gamma) \pi^{(2)}_k z^{(k,\gamma)} \nonumber \\
& + \sum_{k\in[K]} \sum_{i\in[n]} (\underline{d}_i + [\overline{d}_i - \underline{d}_i - \pi^{(2)}_k]_+) y^{(k,\gamma)}_i + \gamma \pi^{(\gamma)} + \sum_{i\in[n]} \rho^{(\gamma)}_i \hspace{-1cm} & \forall \gamma \in [\Gamma]_0 \\
& \pi^{(\gamma)} + \rho^{(\gamma)}_i \ge (\overline{c}_i - \underline{c}_i) x_i & \forall i\in[n], \gamma\in [\Gamma]_0 \\
& A \pmb{y}^{(k,\gamma)} \ge (\pmb{b} - A\pmb{x}) z^{(k,\gamma)} & \forall k\in[K],\gamma \in [\Gamma]_0 \\
& (1-x_i) z^{(k,\gamma)} \ge y^{(k,\gamma)}_i & \forall i\in[n], k\in[K], \gamma \in [\Gamma]_0 \\
& \sum_{k\in[K]} z^{(k,\gamma)} = 1 & \forall \gamma\in[\Gamma]_0 \\
& x_i \in\{0,1\} & \forall i\in[n] \\
& y^{(k,\gamma)}_i \ge 0 & \forall i\in[n], k\in[K], \gamma\in[\Gamma]_0 \\
& z^{(k,\gamma)} \ge 0 & \forall k\in[K], \gamma\in[\Gamma]_0  \\
& \pi^{(\gamma)} \ge 0 & \forall \gamma\in[\Gamma]_0 \\
& \rho^{(\gamma_i)} \ge 0 & \forall \gamma \in[\Gamma]_0, i\in[n].
\end{align}
\end{subequations}
Note that the products between $x_i$ and $z^{(k,\gamma)}$ variables can be linearized, which results in a mixed-integer programming formulation. We can analyze variables $z^{(k,\gamma)}$ in a similar way as in Section~\ref{sec:cont}, as the following lemma states.
\begin{lemma}\label{lem:disc}
There is an optimal solution to problem~\eqref{pd3} where for each $\gamma\in [\Gamma]_0$, there is one $k'\in[K]$ such that
\[
z^{(k,\gamma)} = \begin{cases} 1 & \text{ if } k=k', \\
0 & \text{ otherwise.} 
\end{cases}
\]
\end{lemma}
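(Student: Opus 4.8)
The plan is to start from an arbitrary optimal solution of~\eqref{pd3} and to show that, for each fixed $\gamma \in [\Gamma]_0$ separately, the block of variables $z^{(\cdot,\gamma)}$ and $y^{(\cdot,\gamma)}$ can be replaced by new values concentrating all $z$-weight on a single index $k'$, without increasing the objective. The variables $z^{(k,\gamma)}$ and $y^{(k,\gamma)}_i$ appear only on the right-hand side $S_\gamma$ of the single $t$-constraint indexed by $\gamma$, together with the $\gamma$-block constraints $A\pmb{y}^{(k,\gamma)} \ge (\pmb{b}-A\pmb{x})z^{(k,\gamma)}$, $(1-x_i)z^{(k,\gamma)} \ge y^{(k,\gamma)}_i$ and $\sum_{k\in[K]} z^{(k,\gamma)} = 1$. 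Keeping $\pmb{x}$, $t$, and all variables of the other $\gamma$-blocks fixed, it suffices to drive $S_\gamma$ to a value attained by a single-index $\pmb{z}$ without increase: weakly decreasing each $S_\gamma$ preserves feasibility of $t$ and hence the objective value.

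First I would fix $\pmb{x}$ and $\gamma$ and apply the same normalization as in the step leading to~\eqref{p5}, writing $y^{(k,\gamma)}_i = z^{(k,\gamma)}\tilde{y}^{(k,\gamma)}_i$ with $\tilde{y}^{(k,\gamma)}_i \in [0,1]$; whenever $z^{(k,\gamma)} = 0$ the corresponding $y^{(k,\gamma)}_i$ is forced to $0$ and $\tilde{y}^{(k,\gamma)}_i$ may be chosen arbitrarily. After this substitution the $\gamma$-block constraints become $A(\pmb{x}+\tilde{\pmb{y}}^{(k,\gamma)}) \ge \pmb{b}$ and $\pmb{x}+\tilde{\pmb{y}}^{(k,\gamma)} \le \pmb{1}$ for each $k$, i.e.\ $\tilde{\pmb{y}}^{(k,\gamma)} \in P(\pmb{x})$ independently across $k$. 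Consequently $S_\gamma$ takes the form $\sum_{k\in[K]} c^\gamma_k\, z^{(k,\gamma)} + \gamma\pi^{(\gamma)} + \sum_{i\in[n]}\rho^{(\gamma)}_i$, where $c^\gamma_k := (\Gamma-\gamma)\pi^{(2)}_k + \min_{\tilde{\pmb{y}}\in P(\pmb{x})} \sum_{i\in[n]}(\underline{d}_i + [\overline{d}_i - \underline{d}_i - \pi^{(2)}_k]_+)\tilde{y}_i$ is obtained by optimizing each $\tilde{\pmb{y}}^{(k,\gamma)}$ separately (legitimate because the $z^{(k,\gamma)}$ are nonnegative weights and the blocks decouple).

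The decisive observation is that, in contrast to the continuous case treated in Lemma~\ref{lemmaz}, the variables $\pi^{(\gamma)}$ and $\rho^{(\gamma)}_i$ are completely decoupled from $\pmb{z}$: they appear neither in the $\gamma$-block constraints nor inside any positive-part expression depending on $\pmb{z}$. Hence, with $\pi^{(\gamma)}$ and $\rho^{(\gamma)}_i$ held fixed, $S_\gamma$ is an \emph{affine} function of $z^{(\cdot,\gamma)}$ over the simplex $\{\sum_{k\in[K]} z^{(k,\gamma)} = 1,\ z^{(k,\gamma)}\ge 0\}$. A linear function over a simplex attains its minimum at a vertex, so choosing an index $k'$ minimizing $c^\gamma_k$ and setting $z^{(k',\gamma)} = 1$, $z^{(k,\gamma)} = 0$ for $k\neq k'$, $\tilde{\pmb{y}}^{(k',\gamma)}$ equal to the minimizer defining $c^\gamma_{k'}$ and $\tilde{\pmb{y}}^{(k,\gamma)} = \pmb{0}$ otherwise yields a value of $S_\gamma$ no larger than its original value (the original block was feasible, hence not below the minimum). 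This contrast is exactly why the continuous analysis produced up to two active $z$-values, namely the minimization of a piecewise-linear convex function whose minimum may lie on an edge, whereas here a single active value suffices.

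Performing this replacement independently for every $\gamma \in [\Gamma]_0$ produces an optimal solution of the claimed form. I expect the only delicate points to be the bookkeeping of the substitution when $z^{(k,\gamma)} = 0$ (handled exactly as in the derivation of~\eqref{p5}) and the verification that the $\tilde{\pmb{y}}^{(k,\gamma)}$ genuinely decouple across $k$ once $\pmb{x}$ is fixed; everything else is the routine ``affine objective over a simplex'' argument.
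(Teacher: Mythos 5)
Your proposal is correct and follows essentially the same route as the paper: fix $\pmb{x}$ (and the decoupled variables $\pi^{(\gamma)},\rho^{(\gamma)}_i$), decompose the problem per $\gamma$-block, apply the normalization $y^{(k,\gamma)}_i = z^{(k,\gamma)}\tilde{y}^{(k,\gamma)}_i$ as in the derivation of problem~\eqref{p5}, and observe that the remaining problem in $z^{(\cdot,\gamma)}$ is a linear minimization over a simplex, hence attained at a vertex. The only cosmetic difference is that you additionally optimize each $\tilde{\pmb{y}}^{(k,\gamma)}$ block to define the coefficients $c^\gamma_k$, whereas the paper simply fixes $\tilde{\pmb{y}}^{(k,\gamma)}$ at their current values before invoking the simplex argument; your side remark contrasting this with the two-active-variable outcome of Lemma~\ref{lemmaz} is accurate but not a different proof.
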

\begin{proof}
Let us assume that in problem~\eqref{pd3}, variables $\pmb{x}$, $\pmb{\pi}$ and $\pmb{\rho}$ are fixed. The remaining problem then reduces to the following linear program:
\begin{subequations}
\label{pd4}
\begin{align}
\min\ &  t \\
\text{s.t. } & t \ge \sum_{k\in[K]} (\Gamma-\gamma) \pi^{(2)}_k z^{(k,\gamma)} \nonumber \\
& + \sum_{k\in[K]} \sum_{i\in[n]} (\underline{d}_i + [\overline{d}_i - \underline{d}_i - \pi^{(2)}_k]_+) y^{(k,\gamma)}_i + \text{const} & \forall \gamma \in [\Gamma]_0 \label{pd4con}\\
& A \pmb{y}^{(k,\gamma)} \ge (\pmb{b} - A\pmb{x}) z^{(k,\gamma)} & \forall k\in[K],\gamma \in [\Gamma]_0 \\
& (1-x_i) z^{(k,\gamma)} \ge y^{(k,\gamma)}_i & \forall i\in[n], k\in[K], \gamma \in [\Gamma]_0 \\
& \sum_{k\in[K]} z^{(k,\gamma)} = 1 & \forall \gamma\in[\Gamma]_0 \\
& y^{(k,\gamma)}_i \ge 0 & \forall i\in[n], k\in[K], \gamma\in[\Gamma]_0 \\
& z^{(k,\gamma)} \ge 0 & \forall k\in[K], \gamma\in[\Gamma]_0. 
\end{align}
\end{subequations}
Problem~\eqref{pd4} can be decomposed by minimizing a subproblem for each $\gamma$ with the right-hand side of constraint~\eqref{pd4con} in the objective, that is,
\begin{subequations}
\label{pd5}
\begin{align}
\min\ & \sum_{k\in[K]} (\Gamma-\gamma) \pi^{(2)}_k z^{(k,\gamma)}  + \sum_{k\in[K]} \sum_{i\in[n]} (\underline{d}_i + [\overline{d}_i - \underline{d}_i - \pi^{(2)}_k]_+) y^{(k,\gamma)}_i + \text{const} \hspace{-2cm}\\
& A \pmb{y}^{(k,\gamma)} \ge (\pmb{b} - A\pmb{x}) z^{(k,\gamma)} & \forall k\in[K] \\
& (1-x_i) z^{(k,\gamma)} \ge y^{(k,\gamma)}_i & \forall i\in[n], k\in[K] \\
& \sum_{k\in[K]} z^{(k,\gamma)} = 1 \\
& y^{(k,\gamma)}_i \ge 0 & \forall i\in[n], k\in[K] \\
& z^{(k,\gamma)} \ge 0 & \forall k\in[K].
\end{align}
\end{subequations}
We substitute $y^{(k,\gamma)}_i = z^{(k,\gamma)}\tilde{y}^{(k,\gamma)}_i$ analogous to the substitution of variables in problem~\eqref{p5} to find:
\begin{subequations}
\begin{align}
\min\ & \sum_{k\in[K]} (\Gamma-\gamma) \pi^{(2)}_k z^{(k,\gamma)}  + \sum_{k\in[K]} \sum_{i\in[n]} (\underline{d}_i + [\overline{d}_i - \underline{d}_i - \pi^{(2)}_k]_+) z^{(k,\gamma)}\tilde{y}^{(k,\gamma)}_i + \text{const} \hspace{-2cm}\\
& A \tilde{\pmb{y}}^{(k,\gamma)} z^{(k,\gamma)} \ge (\pmb{b} - A\pmb{x}) z^{(k,\gamma)} & \forall k\in[K] \label{pd5bcon1}\\
& (1-x_i) z^{(k,\gamma)} \ge z^{(k,\gamma)}\tilde{y}^{(k,\gamma)}_i & \forall i\in[n], k\in[K] \label{pd5bcon2}\\
& \sum_{k\in[K]} z^{(k,\gamma)} = 1 \\
& \tilde{y}^{(k,\gamma)}_i \ge 0 & \forall i\in[n], k\in[K] \\
& z^{(k,\gamma)} \ge 0 & \forall k\in[K].
\end{align}
\end{subequations}
We see that if $z^{(k,\gamma)} = 0$, then variables $\tilde{\pmb{y}}^{(k,\gamma)}$ can be chosen arbitrarily. Consider a feasible solution where all variables $\tilde{\pmb{y}}^{(k,\gamma)}$ are fixed. Then constraints~\eqref{pd5bcon1} and ~\eqref{pd5bcon2} are true independent of $z^{(k,\gamma)}$. The remaining problem in $z^{(k,\gamma)}$ is a linear minimization problem over a simplex. Hence the claim follows.
\end{proof}
Using Lemma~\ref{lem:disc}, we can assume that variables $z^{(k,\gamma)}$ in 
problem~\eqref{pd3} are binary. Note, however, that we cannot enumerate all possible choices of $z^{(k,\gamma)}$ in polynomial time, if $\Gamma$ is not a constant value. For every $\gamma \in [\Gamma]_0$, we need to decide independently for which $k \in [K]$ we have $z^{(k,\gamma)} = 1$. These are $O(K^{\Gamma+1})$ many possibilities. We write $\kappa^{(2)}_\gamma$ for the value of $\pi^{(2)}_k$ where $z^{(k,\gamma)}$ is equal to one.

Finally, consider variables $\pi^{(\gamma)}$ and $\rho^{(\gamma)}_i$. Using the classical argument of finding the kink points in a piecewise linear function \cite{bertsimas2003robust}, we find that $\pi^{(\gamma)}\in\{\overline{c}_i-\underline{c}_i : i\in[n]\} \cup \{0\}$. Combining these observations, we can enumerate the choice of all $z^{(k,\gamma)}$ and $\pi^{(\gamma)}$ variables in polynomial time if $\Gamma$ is a constant.

\begin{theorem}
Let a robust two-stage combinatorial optimization problem with two-stage discrete budgeted uncertainty be given. If assumption~\eqref{ass} holds, then the robust problem can be decomposed into $O(n^{2\Gamma})$ many subproblems of the form:
\begin{subequations}
\label{pd6}
\begin{align}
\min\ & t \\
\text{s.t. } & t \ge (\Gamma-\gamma)\kappa^{(2)}_\gamma + \sum_{i\in[n]} (\underline{d}_i + [\overline{d}_i - \underline{d}_i - \kappa^{(2)}_\gamma]_+)y^{(\gamma)}_i \nonumber \\
& + \gamma\pi^{(\gamma)} + \sum_{i\in[n]} (\underline{c}_i + [\overline{c}_i - \underline{c}_i - \pi^{(\gamma)}]_+)x_i & \forall \gamma\in [\Gamma]_0 \\
& A(\pmb{x}+\pmb{y}^{(\gamma)}) \ge \pmb{b}  & \forall \gamma\in [\Gamma]_0 \\
& \pmb{x} + \pmb{y}^{(\gamma)} \le \pmb{1} & \forall \gamma\in [\Gamma]_0 \\
& x_i \in\{0,1\} & \forall i\in[n] \\
& y^{(\gamma)}_i \in [0,1] & \forall  \gamma\in [\Gamma]_0, i\in[n].
\end{align}
\end{subequations}

\end{theorem}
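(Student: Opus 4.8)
The plan is to take the compact reformulation~\eqref{pd3} as the starting point. As in the continuous case (and under assumption~\eqref{ass}), \eqref{pd3} is an equivalent formulation of \textsc{Rob}, so it suffices to show that \eqref{pd3} can be solved by enumerating $n^{O(\Gamma)}$ instances of~\eqref{pd6}. The idea is to eliminate the three families of auxiliary variables $z^{(k,\gamma)}$, $\pi^{(\gamma)}$ and $\rho^{(\gamma)}_i$ one after another, restricting each to an explicit small candidate set, and then to enumerate the surviving guesses and simplify the residual problem.

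For the variables $z^{(k,\gamma)}$ I would invoke Lemma~\ref{lem:disc}: there is an optimum in which, for each $\gamma\in[\Gamma]_0$, exactly one $z^{(k',\gamma)}$ equals $1$ and the rest vanish. Writing $\kappa^{(2)}_\gamma:=\pi^{(2)}_{k'}$, and noting $K=|\Pi|\le n+1$, the independent choice of $k'$ over the $\Gamma+1$ values of $\gamma$ gives $K^{\Gamma+1}=O(n^{\Gamma+1})$ possibilities. For $\rho^{(\gamma)}_i$, which is driven to its smallest feasible value in any optimum, we have $\rho^{(\gamma)}_i=[(\overline c_i-\underline c_i)x_i-\pi^{(\gamma)}]_+=[\overline c_i-\underline c_i-\pi^{(\gamma)}]_+\,x_i$ since $x_i\in\{0,1\}$. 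Substituting this closed form makes the objective piecewise linear in each $\pi^{(\gamma)}$, so by the kink-point argument of \cite{bertsimas2003robust} there is an optimum with $\pi^{(\gamma)}\in\{\overline c_i-\underline c_i:i\in[n]\}\cup\{0\}$; this is $O(n)$ candidates for each of the $\Gamma+1$ coordinates, hence $O(n^{\Gamma+1})$ in total. Multiplying the two counts yields $n^{O(\Gamma)}$ subproblems, matching the stated bound.

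It then remains to check that, for each fixed tuple $(\kappa^{(2)}_\gamma)_\gamma$ and $(\pi^{(\gamma)})_\gamma$, the residual problem is exactly~\eqref{pd6}. Fixing $z^{(k',\gamma)}=1$ collapses the doubly indexed recourse variables to a single vector $\pmb y^{(\gamma)}:=\pmb y^{(k',\gamma)}$ per $\gamma$, and turns the constraints $A\pmb y^{(k,\gamma)}\ge(\pmb b-A\pmb x)z^{(k,\gamma)}$ and $(1-x_i)z^{(k,\gamma)}\ge y^{(k,\gamma)}_i$ into $A(\pmb x+\pmb y^{(\gamma)})\ge\pmb b$ and $\pmb x+\pmb y^{(\gamma)}\le\pmb 1$, i.e.\ $\pmb y^{(\gamma)}\in P(\pmb x)$. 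Inserting the closed form for $\rho^{(\gamma)}_i$ and folding the objective term $\sum_i\underline c_ix_i$ of~\eqref{pd3} into the right-hand side of every $t$-constraint (equivalently, minimizing $t+\sum_i\underline c_ix_i$) produces precisely the coefficient $\underline c_i+[\overline c_i-\underline c_i-\pi^{(\gamma)}]_+$ on $x_i$, so the system becomes~\eqref{pd6}. Keeping the $y^{(\gamma)}_i$ continuous is harmless: each $\pmb y^{(\gamma)}$ enters only its own $t$-constraint, so for fixed $\pmb x$ the vectors decouple and each is optimized by a linear program over $P(\pmb x)$, whose optimum is integral by~\eqref{avar}. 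The step I expect to require the most care is the bookkeeping of this final substitution, and in particular making the independence of the guesses across the $\Gamma+1$ values of $\gamma$ explicit; it is exactly this independence that forces the $n^{O(\Gamma)}$ count and thereby explains why, in contrast to the continuous case, the discrete problem is only tractable for constant $\Gamma$.
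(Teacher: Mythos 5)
Your proposal is correct and follows essentially the same route as the paper: starting from the compact formulation~\eqref{pd3}, invoking Lemma~\ref{lem:disc} to make the $z^{(k,\gamma)}$ binary and enumerable as $\kappa^{(2)}_\gamma$, eliminating $\rho^{(\gamma)}_i$ via its closed form, restricting $\pi^{(\gamma)}$ to the kink points $\{\overline{c}_i-\underline{c}_i : i\in[n]\}\cup\{0\}$, and verifying that each fixed guess collapses the residual problem to~\eqref{pd6}. Your bookkeeping of the independent per-$\gamma$ guesses and the resulting $n^{O(\Gamma)}$ count matches the paper's argument, including the observation that integrality of the continuous $\pmb{y}^{(\gamma)}$ is recovered from~\eqref{avar}.
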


Finally, note that we can use problems~\eqref{pd6} to find an alternative compact problem formulation. To this end, we consider $\pi^{(\gamma)}$ and $\kappa^{(2)}_\gamma$ as variables again. The brackets that enforce the positive part are replaced by variables $\rho^{(2)}_{\gamma,i}$ and $\rho^{(\gamma)}_i$, respectively. Note, however, that we must require $y^{(\gamma)}_i\in\{0,1\}$ for this replacement. This gives the following result.

\begin{theorem}
The robust two-stage combinatorial optimization problem with two-stage discrete budgeted uncertainty
under assumption~\eqref{ass} can be formulated as the following compact mixed-integer program:
\begin{subequations}
\label{pd7}
\begin{align}
\min\ & t \\
\text{s.t. } & t \ge (\Gamma-\gamma)\kappa^{(2)}_\gamma + \sum_{i\in[n]} \rho^{(2)}_{\gamma,i} + \sum_{i\in[n]} \underline{d}_i y^{(\gamma)}_i  \nonumber \\
& + \gamma\pi^{(\gamma)} + \sum_{i\in[n]} \rho^{(\gamma)}_i + \sum_{i\in[n]} \underline{c}_i x_i & \forall \gamma\in [\Gamma]_0  \label{16b}\\
& A(\pmb{x}+\pmb{y}^{(\gamma)}) \ge \pmb{b}  & \forall \gamma\in [\Gamma]_0 \\
& \pmb{x} + \pmb{y}^{(\gamma)} \le \pmb{1} & \forall \gamma\in [\Gamma]_0 \\
& \kappa^{(2)}_\gamma + \rho^{(2)}_{\gamma,i} \ge (\overline{d}_i - \underline{d}_i)y^{(\gamma)}_i & \forall \gamma\in [\Gamma]_0, i\in[n] \label{16e}\\
& \pi^{(\gamma)} + \rho^{(\gamma)}_i \ge (\overline{c}_i - \underline{c}_i)x_i & \forall \gamma\in [\Gamma]_0, i\in[n] \label{16f} \\
& x_i \in\{0,1\} & \forall i\in[n] \\
& y^{(\gamma)}_i \in \{0,1\} & \forall  \gamma\in [\Gamma]_0, i\in[n] \\
& \pi^{(\gamma)}, \kappa^{(2)}_\gamma \ge 0 & \forall  \gamma\in [\Gamma]_0 \\
& \rho^{(\gamma)}_i, \rho^{(2)}_{\gamma,i} \ge 0 & \forall  \gamma\in [\Gamma]_0, i\in[n] .
\end{align}
\end{subequations}

\end{theorem}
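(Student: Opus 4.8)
The plan is to derive \eqref{pd7} directly from the family of subproblems \eqref{pd6}, by promoting the previously enumerated parameters $\pi^{(\gamma)}$ and $\kappa^{(2)}_\gamma$ back to decision variables and then linearising the two positive-part expressions appearing in the right-hand side of constraint~\eqref{16b}. First I would observe that the outer minimum over the $O(n^{2\Gamma})$ instances of \eqref{pd6} is exactly a minimisation over the enumerated values of each $\kappa^{(2)}_\gamma$ and each $\pi^{(\gamma)}$. As a function of a single $\kappa^{(2)}_\gamma \ge 0$ (all other quantities fixed), the objective contribution $(\Gamma-\gamma)\kappa^{(2)}_\gamma + \sum_{i\in[n]} [\overline{d}_i - \underline{d}_i - \kappa^{(2)}_\gamma]_+ y^{(\gamma)}_i$ is piecewise linear, and its slope only changes at the points of $\Pi$; the same piecewise-linear structure holds for $\pi^{(\gamma)}$ with kinks in $\{\overline{c}_i - \underline{c}_i : i\in[n]\}\cup\{0\}$. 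Since the minimum of a piecewise-linear function is attained at a breakpoint, and the breakpoints are precisely the enumerated values, the minimum over the enumerated set coincides with the minimum over all $\kappa^{(2)}_\gamma, \pi^{(\gamma)} \ge 0$. Hence we may treat these as free nonnegative variables inside a single optimisation problem, optimised independently for each $\gamma$.

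Next I would impose $y^{(\gamma)}_i \in \{0,1\}$. This is the crucial step, and it is justified by assumption~\eqref{ass}: by the equivalence with \eqref{avar}, for every fixed binary $\pmb{x}$ the inner problem in each $\pmb{y}^{(\gamma)}$ optimises a linear objective over the integral polyhedron $P(\pmb{x})$, so an integral optimal $\pmb{y}^{(\gamma)}$ always exists and we may require integrality without changing the optimal value. With $y^{(\gamma)}_i \in \{0,1\}$ and $\kappa^{(2)}_\gamma \ge 0$ one has the identity
\[ [\overline{d}_i - \underline{d}_i - \kappa^{(2)}_\gamma]_+\, y^{(\gamma)}_i = \big[(\overline{d}_i - \underline{d}_i)y^{(\gamma)}_i - \kappa^{(2)}_\gamma\big]_+, \]
since both sides vanish when $y^{(\gamma)}_i=0$ and agree when $y^{(\gamma)}_i=1$; the analogous identity holds for the $\pmb{c}$-terms because $x_i$ is already binary. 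I would emphasise that this identity is exactly where binary $\pmb{y}^{(\gamma)}$ is needed: for fractional $y^{(\gamma)}_i$ the two sides differ, which is why the relaxation $y^{(\gamma)}_i\in[0,1]$ admissible in \eqref{pd6} can no longer be used here.

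Finally I would linearise both positive parts by the standard epigraph trick. Introducing $\rho^{(2)}_{\gamma,i} \ge 0$ together with $\kappa^{(2)}_\gamma + \rho^{(2)}_{\gamma,i} \ge (\overline{d}_i - \underline{d}_i)y^{(\gamma)}_i$ (constraint~\eqref{16e}) forces $\rho^{(2)}_{\gamma,i} \ge [(\overline{d}_i - \underline{d}_i)y^{(\gamma)}_i - \kappa^{(2)}_\gamma]_+$; since $\rho^{(2)}_{\gamma,i}$ enters the right-hand side of \eqref{16b} with a nonnegative coefficient and $t$ is being minimised, at optimality $\rho^{(2)}_{\gamma,i}$ attains this lower bound, reproducing the positive-part term. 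The variables $\rho^{(\gamma)}_i$ with constraint~\eqref{16f} play the same role for the first-stage cost terms. Substituting these linearisations into \eqref{pd6} yields precisely the program \eqref{pd7}, completing the reformulation. I expect the main obstacle to be the careful justification that integrality of $\pmb{y}^{(\gamma)}$ may be imposed, since it is precisely this integrality (and not merely feasibility of the relaxation) that makes the positive-part identity — and hence the entire linearisation — valid.
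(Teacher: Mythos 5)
Your proposal is correct and follows essentially the same route as the paper: the paper likewise derives \eqref{pd7} from the subproblems \eqref{pd6} by treating $\pi^{(\gamma)}$ and $\kappa^{(2)}_\gamma$ as variables again, replacing the positive-part brackets by the variables $\rho^{(\gamma)}_i$ and $\rho^{(2)}_{\gamma,i}$, and noting that $y^{(\gamma)}_i\in\{0,1\}$ must be required for this replacement. Your write-up merely makes explicit the details the paper leaves implicit (the breakpoint argument justifying the re-promotion to variables, integrality of $\pmb{y}^{(\gamma)}$ via assumption~\eqref{avar}, and the binary positive-part identity), all of which are sound.
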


\subsection{Hardness}
\label{sec:hardness}

While the results in Section~\ref{sec:cont} show that problems with two-stage continuous budgeted uncertainty can often be solved in polynomial time, we show here that this is unlikely to be possible for two-stage discrete budgeted uncertainty, as simple problems already become NP-hard. Recall that for \textsc{representative selection}, we have $\X = \{ \pmb{x}\in\{0,1\}^n : \sum_{i\in T_j} x_i=1, j\in[\ell]\}$ with an item partition $T_1\cup T_2 \cup \ldots \cup T_\ell = [n]$.

\begin{theorem}\label{th:hardness}
Two-stage \textsc{representative selection} with two-stage discrete budgeted uncertainty is NP-hard, even if $|T_j|=2$ and $\Gamma=1$.
\end{theorem}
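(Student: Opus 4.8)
The plan is to first pin down the adversary completely in the regime $\Gamma=1$, and then reduce from the weakly NP-hard problem \textsc{Partition}. Since the budget is a single indivisible unit and spending it on an item the decision maker does not take is never advantageous (it only forfeits the second-stage budget, which helps the decision maker), the adversary has essentially two pure strategies: spend the unit in the first stage on one \emph{committed} item (an $i$ with $x_i=1$), after which no budget remains and the recourse is a \emph{nominal} completion; or save the unit and strike one selected item in the second stage, after which the recourse is a one-stage \emph{robust} completion. Writing $V_A(\pmb{x})$ and $V_B(\pmb{x})$ for the values of these two strategies, the objective collapses to
\[ \textsc{Rob} = \min_{\pmb{x}\in\X}\ \max\big(V_A(\pmb{x}),\, V_B(\pmb{x})\big). \]
For \textsc{representative selection} with $|T_j|=2$, the first-stage vector $\pmb{x}$ just decides, per group, whether to \emph{commit} one of its two items or to \emph{defer} the group; then $V_A$ equals the committed nominal cost plus the single largest first-stage increase among committed items plus the cheapest nominal completion of the deferred groups, while $V_B$ equals the committed nominal cost plus the robust completion cost of the deferred groups.

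The combinatorial core I would exploit is the asymmetry between the two stages. In $V_A$ the adversary can hit only \emph{one} committed item, so committing many groups costs merely a single attack; in $V_B$, by contrast, I can \emph{force} every deferred group to contribute additively by equipping it with one ``booby-trapped'' item whose second-stage upper cost $\overline{d}_i$ is prohibitively large. In the robust completion the decision maker must then pack the expensive-but-safe item in \emph{every} deferred group, because leaving any single trap open is catastrophic under the one remaining attack; this is precisely what defeats the ``single-attack collapse'' and turns $V_B$ into a genuine sum over the deferred groups, whereas $V_A$ charges those groups only at their cheap nominal value.

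Building on this, the reduction takes a \textsc{Partition} instance $w_1,\dots,w_n$ with $W=\sum_j w_j$ and creates one element group $T_j=\{a_j,b_j\}$ per weight, where committing $a_j$ has first-stage cost $w_j$ with no increase, and $b_j$ is booby-trapped (second-stage cost $0$ nominally but enormous when attacked). I would design the parameters so that committing group $j$ contributes $w_j$ to both $V_A$ and $V_B$, while deferring it contributes $0$ to $V_A$ (Case A picks the cheap trap item, unthreatened) but $2w_j$ to $V_B$ (Case B is forced onto the safe item). A single auxiliary ``threat'' group, made so expensive to defer that it is always committed and carries a first-stage increase of exactly $W$, pins the first-stage attack to $\theta_1=W$. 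Writing $x=\sum_{j\text{ deferred}} w_j$, the value then equals $(W-x)+\max(W,2x)$, which is a V-shape minimized at $x=W/2$ with value $3W/2$; hence $\textsc{Rob}\le \tfrac{3}{2}W$ holds if and only if the weights can be split into two equal halves. All costs stay non-negative and the construction respects $|T_j|=2$ and $\Gamma=1$.

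The hard part will not be the high-level idea but the gadget's correctness bookkeeping. The delicate points are: (i) verifying the ``booby-trap'' argument, namely that in the robust completion the decision maker is genuinely compelled to take the safe item in \emph{all} deferred groups, so that $V_B$ is the intended sum rather than a maximum; (ii) confirming that the threat group is always committed, always chosen so as to realize the increase $W$, and that element groups are never committed through $b_j$; (iii) checking that in Case A the adversary indeed prefers the threat item to any element item; and (iv) ruling out, in both directions of the equivalence, any ``mixed'' or wasteful adversary deviation beyond the two identified pure strategies, as well as any unexpected recourse of the decision maker. I expect step (iv), together with the calibration of the large constants so that no trapped solution can ever beat $\tfrac{3}{2}W$, to be the main source of technical work.
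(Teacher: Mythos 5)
Your proposal is correct and follows essentially the same route as the paper's proof: a reduction from \textsc{Partition} in which each weight becomes a two-item group containing a ``safe'' second-stage item and a ``booby-trapped'' item with a huge second-stage cost increase, so that the adversary's two pure strategies (attack the first stage vs.\ save the budget for the second stage) force the decision maker to balance committed against deferred weights. The only differences are cosmetic: the paper gives every item a uniform first-stage increase of $4A$ and uses negative (then shifted) costs so that the equivalence lands at objective value $0$, whereas you pin the first-stage attack via a dedicated threat group and keep all costs non-negative, landing at the threshold $\tfrac{3}{2}W$.
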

\begin{proof}
Let an instance $a_1, \dots, a_n \in \mathbb{N}$ of the NP-hard \textsc{partition} problem be given, see \cite{garey1979computers}. Let $\sum_{i\in[n]} a_i = 2A$. The question is whether there exists a set $X\subseteq[n]$ such that $\sum_{i\in X} a_i = \sum_{i\in \bar{X}} a_i = A$ with $\bar{X} = [n]\setminus X$.

We construct an instance of the two-stage \textsc{representative selection} problem in the following way. There are $n$ sets of items $T_i$, each consisting of two items from which one must be chosen. For the first item in each set, we set $\underline{c}_{i1} = -a_i$, $\hat{c}_{i1} = 4A$, $\underline{d}_{i1} = a_i$ and $\hat{d}_{i1} = 0$. For the second item, we set $\underline{c}_{i2} = -a_i$, $\hat{c}_{i2} = 4A$, $\underline{d}_{i2} = -3a_i$, $\hat{d}_{i2} = M$ for a big constant $M$ (it suffices to set $M>2A$). Refer to Table~\ref{tab:red1} for an overview. The instance is completed by setting $\Gamma=1$.

\begin{table}[htb]
\begin{center}
\begin{tabular}{c|cc|cc|c|cc}
\multicolumn{1}{c}{} & \multicolumn{2}{c}{$T_1$} & \multicolumn{2}{c}{$T_2$} & \multicolumn{1}{c}{} & \multicolumn{2}{c}{$T_n$} \\
$i$ & 1 & 2 & 3 & 4 & $\cdots$ & $2n-1$ & $2n$  \\
\hline
$\underline{c}_i$ & $-a_1$ & $-a_1$ & $-a_2$ & $-a_2$ & $\cdots$ & $-a_n$ & $-a_n$\\ 
$\overline{c}_i-\underline{c}_i$ &  $4A$ & $4A$ & $4A$ & $4A$ & $\cdots$ & $4A$ & $4A$\\ 
$\underline{d}_i$ & $a_1$ & $-3a_1$ & $a_2$ & $-3a_2$ & $\cdots$ & $-a_n$ & $-3a_n$ \\
$\overline{d}_i-\underline{d}_i$ & $0$ & $M$ & $0$ & $M$ & $\cdots$ & $0$ & $M$
\end{tabular}
\caption{Instance used in the hardness reduction for two-stage \textsc{representative selection}.}\label{tab:red1}
\end{center}
\end{table}

Note that we can build an instance without negative item costs in the same manner by adding a sufficiently large constant to each cost. As every feasible solution contains the same number of items, the objective value of each solution is changed by the same constant.

Furthermore, note that the first-stage costs of items 1 and 2 in each set are the same. This means that we only need to consider the following choices in each set: (i) Buy any of the two items in the first stage. (ii) Buy the first item in the second stage. (iii) Buy the second item in the second stage. Let $X\subseteq[n]$ be the index set of those sets $T_i$ where we decide to buy any of the two items in the first stage.

The adversary now has two choices: Either the uncertainty budget is spent on the first-stage choice of items, or it is spent on the future second-stage choice of items. In the first case, a cost increase of $4A$ is reached, as long as any item is bought in the first stage. Then, an optimal solution will pack the second item in each remaining set, as each has lower costs than the first respective item. Overall, the costs become $\sum_{i\in X} -a_i + 4A -\sum_{i\in\bar{X}} -3a_i$. In the second case, packing any of the second items in each set will lead to a high penalty $M$ and is thus not possible in an optimal solution. Hence, we need to fill the solution with the first item from each set. The complete costs become $\sum_{i\in X} -a_i + \sum_{i\in \bar{X}} a_i$.

Combining these two cases, the worst-case costs of any choice $X$ is:
\begin{align*}
&\max\{ -\sum_{i\in X} a_i + 4A - 3\sum_{i\in \bar{X}} a_i , -\sum_{i\in X} a_i + \sum_{i \in \bar{X}} a_i \} \\
=& \max\{ 2\sum_{i\in X} a_i + 2 \sum_{i\in \bar{X}} a_i - \sum_{i\in X} a_i - 3\sum_{i\in \bar{X}} a_i, -\sum_{i\in X} a_i + \sum_{i \in \bar{X}} a_i \} \\
=& \max \{\sum_{i\in X} a_i - \sum_{i\in \bar{X}} a_i, -\sum_{i\in X} a_i + \sum_{i\in \bar{X}} a_i \} \\
=& \left| \sum_{i\in X} a_i - \sum_{i\in \bar{X}} a_i \right|.
\end{align*}
Let $X$ be an optimal choice for this problem. We see that the \textsc{partition} problem is a yes-instance if and only if the objective value of $X$ in the two-stage \textsc{representative selection} problem is zero.
\end{proof}

As the \textsc{representative selection} problem can be interpreted as a graph-based connectivity problem, where every set $T_j$ corresponds to a set of parallel edges, which are then arranged sequentially, we can conclude that also \textsc{shortest path} and \textsc{minimum spanning tree} problems are hard.
  Furthermore, the \textsc{representative selection} problem can be reduced to an instance of the \textsc{Assignment} problem, which hence also is hard.
We add four vertices for each set $T_j$ (two on each side of the bipartite graph) and set the costs of the two 
only possible assignments among those four vertices such that it corresponds to the two possible selections in $T_j$. 
All other costs are set to large values.

\begin{cor}
Two-stage \textsc{shortest path}, \textsc{minimum spanning tree} and \textsc{Assignment} problems with two-stage discrete budgeted uncertainty are NP-hard, even for series-parallel graphs and if $\Gamma=1$.
\end{cor}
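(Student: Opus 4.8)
The plan is to reduce from two-stage \textsc{representative selection} with two-stage discrete budgeted uncertainty, which is already NP-hard by Theorem~\ref{th:hardness} for $|T_j|=2$ and $\Gamma=1$. I would re-use verbatim the cost data $(\underline{c}_i,\overline{c}_i,\underline{d}_i,\overline{d}_i)$ produced in that reduction and change only the combinatorial structure of $\X_\text{nom}$, so that it becomes the feasible-solution set of a \textsc{shortest path}, \textsc{minimum spanning tree}, or \textsc{Assignment} instance, while preserving item-by-item the choice ``pick exactly one of the two elements of $T_j$''. Since the value of the full min-max-min-max game is determined only by $\X_\text{nom}$ together with the cost data, it suffices to exhibit, for each target problem, a cost-preserving bijection between its feasible solutions and the representative-selection solutions; the hardness then transfers immediately.

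For \textsc{shortest path} and \textsc{minimum spanning tree} I would build the series-parallel graph on nodes $v_0,\dots,v_n$ in which, for each set $T_j$, the two elements become two parallel edges joining $v_{j-1}$ and $v_j$, carrying the respective item cost data. Every $v_0$–$v_n$ path traverses exactly one edge of each parallel class, so $s$–$t$ paths are in cost-preserving bijection with representative-selection solutions, and the graph is series-parallel by construction. The same graph handles spanning trees: deleting all edges of one parallel class disconnects the graph, while including both edges of a class creates a cycle, so every spanning tree selects exactly one edge per class, again recovering representative selection. In both constructions the first-/second-stage split and the budgeted uncertainty act on edges exactly as they acted on items, so the robust value is unchanged.

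For \textsc{Assignment} I would, for each set $T_j$, introduce left vertices $a_j,a_j'$ and right vertices $b_j,b_j'$, among which only two perfect matchings exist, namely $\{(a_j,b_j),(a_j',b_j')\}$ and $\{(a_j,b_j'),(a_j',b_j)\}$. I would encode the first choice by placing the item-1 cost data on $(a_j,b_j)$ and putting $\underline{c}=\overline{c}=\underline{d}=\overline{d}=0$ on $(a_j',b_j')$, and symmetrically encode item~2 on the other matching; all remaining cross-gadget edges receive a prohibitively large cost $M$ (larger than any value attainable by the intended solutions) in every cost component. An optimal assignment then selects one intended matching inside each gadget and no cross edge, giving a cost-preserving bijection between perfect matchings and representative-selection solutions.

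The main obstacle I expect is not the combinatorial encoding but verifying that the auxiliary \emph{free} and \emph{forbidden} edges give neither player a new useful move, so that the full adversarial game value is genuinely preserved. Concretely, I would need to argue that the decision maker never buys a cost-$M$ cross edge (guaranteed by choosing $M$ large enough), that the zero-cost free edges contribute nothing in either stage and cannot be profitably attacked since their lower and upper bounds coincide, and that consequently, with $\Gamma=1$, the adversary's single unit of budget is always spent on an item-carrying edge exactly as in Theorem~\ref{th:hardness}. Once these three points are checked, the value of the constructed min-max-min-max instance equals that of the underlying representative-selection instance, completing all three reductions.
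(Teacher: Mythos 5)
Your proposal is correct and matches the paper's own argument essentially verbatim: the paper likewise reduces from two-stage \textsc{representative selection} (Theorem~\ref{th:hardness}) by turning each $T_j$ into a pair of parallel edges arranged in series for \textsc{shortest path} and \textsc{minimum spanning tree}, and into a four-vertex bipartite gadget with two possible matchings and large costs on all cross edges for \textsc{Assignment}. Your additional verification that the zero-cost and cost-$M$ auxiliary edges give neither player a useful move is a point the paper leaves implicit, so your write-up is, if anything, slightly more careful.
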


We now show that hardness also holds for the \textsc{selection} problem, where $\X = \{ \pmb{x}\in\{0,1\}^n : \sum_{i\in[n]} x_i= p\}$.

\begin{theorem}
Two-stage \textsc{selection} with two-stage discrete budgeted uncertainty is NP-hard, even if $\Gamma=1$.
\end{theorem}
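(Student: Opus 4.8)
The plan is to reduce from the NP-hard \textsc{partition} problem, reusing the gadget idea behind Theorem~\ref{th:hardness} but compensating for the fact that \textsc{selection} has no built-in grouping constraint. Given an instance $a_1,\dots,a_n$ with $\sum_{i\in[n]} a_i = 2A$, I would build a \textsc{selection} instance with $2n$ items grouped (only conceptually) into $n$ pairs, set the cardinality to $p = n$, and keep $\Gamma = 1$. Each pair $i$ would carry costs modelled on the set $T_i$ in Table~\ref{tab:red1}: the two items share their first-stage base cost, one item is \emph{safe} in the second stage (benign range $\overline{d}_i - \underline{d}_i = 0$) and the other is \emph{risky} (attractive base cost $\underline{d}_i = -3a_i$ but huge range $\overline{d}_i - \underline{d}_i = M$). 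As in Theorem~\ref{th:hardness} I would shift all costs by a large constant to remove negative entries, which changes every feasible objective value by the same amount because every solution selects exactly $p$ items.

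The key computation concerns solutions that buy exactly one item from every pair. Let $X\subseteq[n]$ be the pairs served in the first stage and $\bar X$ the pairs completed in the second stage. The adversary has a single unit of budget, so it either attacks one first-stage item (all of which, by the argument below, are safe items sharing first-stage range $4A$) or saves the budget for the second stage. The decision maker fixes the completion only after observing this choice: if the budget was saved, any risky item would be punished by $M$, so the safe item of each $\bar X$-pair is taken at cost $\underline{d}_i = a_i$; if the budget was already spent in the first stage, no second-stage budget remains and the cheaper risky item is taken at cost $-3a_i$. This is exactly the two-case analysis of Theorem~\ref{th:hardness}, and the worst case of the choice $X$ evaluates to
\[ \max\Big\{ -\textstyle\sum_{i\in X} a_i + 4A - 3\sum_{i\in\bar X} a_i,\ -\sum_{i\in X} a_i + \sum_{i\in\bar X} a_i \Big\} = \Big| \sum_{i\in X} a_i - \sum_{i\in\bar X} a_i \Big|, \]
so an optimal one-per-pair solution attains value $0$ if and only if \textsc{partition} is a yes-instance.

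The main obstacle, and the genuinely new part compared with Theorem~\ref{th:hardness}, is to justify that restricting to one item per pair is without loss of generality. The constraint $\sum_{i\in[n]} x_i = p$ does not forbid buying both items of one pair and none of another; such an unbalanced selection silently drops an element $a_j$ from the encoded instance and could make a no-instance look feasible. I would rule this out by strengthening the gadget and then arguing domination over all adversary responses. Concretely, I would additionally give each risky item a large first-stage range, so that buying a risky item in the first stage exposes it to an $M$-sized attack and is therefore never optimal; consequently the first stage uses only the single safe item of each pair and can touch each pair at most once. Doubling a pair in the second stage would force buying its risky item, which the decision maker avoids whenever the adversary threatens the second stage and which only wastes cardinality otherwise; combined with the pigeonhole bound $p = n$ over $n$ pairs, this forces every optimal solution to take exactly one item per pair. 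Verifying that every unbalanced completion is dominated across all adversary responses, and fixing the magnitude of $M$ accordingly, is where the real work lies.

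Finally, once one-per-pair is established, the instance behaves exactly like the representative-selection instance, and the computation above shows that the optimal robust value equals $\min_{X}\big|\sum_{i\in X} a_i - \sum_{i\in\bar X} a_i\big|$, which is $0$ precisely when a balanced partition exists. This would prove NP-hardness of two-stage \textsc{selection} with two-stage discrete budgeted uncertainty already for $\Gamma = 1$.
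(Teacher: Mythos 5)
Your proposal follows the paper's reduction source (\textsc{partition}) and its two-case adversary analysis, but the step you yourself flag as ``where the real work lies'' is not just unfinished --- the fix you sketch provably fails. The false claim is that doubling a pair in the second stage ``only wastes cardinality'' once the adversary has spent its budget. In \textsc{selection} nothing forces every pair to be touched: after the adversary attacks a first-stage item, no threat remains, the decision maker must buy exactly $n-|X|$ further items, and the cheapest available ones are the risky items (cost $-3a_i$) \emph{regardless} of whether their pair was already served in stage one. With your strengthened gadget (which does restrict the first stage to safe items), the robust value of a first-stage choice $X$ with $k=|X|$ and $\sigma=\sum_{i\in X}a_i$ is
\[
\max\Bigl\{\,4A-3T_{n-k}-\sigma,\ 2A-2\sigma\,\Bigr\},
\]
where $T_j$ denotes the sum of the $j$ largest weights. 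This depends on $X$ only through $(k,\sigma)$, both terms decrease in $\sigma$, so for each $k$ the optimal $X$ greedily takes the $k$ largest weights, and your instance is solvable in $O(n\log n)$ time --- it cannot encode \textsc{partition}. Concretely, for the no-instance $a=(3,3,3,1)$ (so $A=5$), taking $X$ as two weight-$3$ pairs gives, under a first-stage attack, $-6+20-18=-4$ (the completion takes two risky weight-$3$ items, one of them from an already-served pair), and under a saved budget $-6+3+1=-2$; hence robust value $-2$, not positive. The yes-instance $a=(2,2,1,1,1,1)$ attains the same value $-2$ (take $X$ with weights $\{2,2,1\}$), not $0$. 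So the claimed equivalence ``optimal value $=0$ iff yes-instance'' fails in both directions, and no choice of $M$ repairs this, since $M$ does not appear in the collapsing terms.

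The paper avoids the pairing issue by using a structurally different gadget in which there is nothing to pair: $n$ weight items carrying \emph{no} uncertainty at all ($\underline{c}_i=\overline{c}_i=a_i$, $\underline{d}_i=\overline{d}_i=2a_i$), a \emph{single} item $\beta$ that is the only worthwhile first-stage target ($\underline{c}=0$, $\overline{c}=2A$, second-stage cost always $M$), and $n$ interchangeable filler items $\gamma$ that are free in the second stage but attackable ($\underline{d}_i=0$, $\overline{d}_i=M$), with $p=n+1$ (see Table~\ref{tab:red2}). Every good solution buys $\beta$ in stage one; with $X$ the weight items bought in stage one, the adversary either attacks $\beta$ (cost $+2A$, after which the decision maker finishes for free with $\gamma$ items) or saves its budget (forcing the completion onto the weight items at doubled cost $2a_i$), yielding $2A+\max\bigl\{\sum_{i\in X}a_i,\sum_{i\in\bar{X}}a_i\bigr\}$, which is $3A$ iff \textsc{partition} is a yes-instance. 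Concentrating all first-stage uncertainty in one item and making the ``reward'' items interchangeable is exactly what removes the need to enforce any one-per-group structure; any rescue of your route would need a gadget whose post-attack completion cost still depends on \emph{which} pairs were served, and that is precisely what cardinality-only feasibility cannot provide.
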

\begin{proof}
As before, let an instance $a_1, \dots, a_n \in \mathbb{N},\sum_{i\in[n]} a_i = 2A$ of the NP-hard \textsc{partition} problem be given, see \cite{garey1979computers}. The questions is whether there exists a set $X\subseteq[n]$ such that $\sum_{i\in X} a_i = \sum_{i\in \bar{X}} a_i = A$ with $\bar{X} = [n]\setminus X$.

We construct an instance of the two-stage \textsc{selection} problem in the following way. There are $2n+1$ items, where $n$ items are of type ($\alpha$), one item is of type ($\beta$), and $n$ items are of type ($\gamma$). Items of type ($\alpha$) each correspond to one of the given weights $a_i$, with $\underline{c}_i=\overline{c}_i=a_i$ and $\underline{d}_i = \overline{d}_i = 2a_i$. The item of type ($\beta$) has $\underline{c}_i = 0$, $\overline{c}_i = 2A$, and $\underline{d}_i=\overline{d}_i=M$, where $M\ge 4A$ denotes denotes a sufficiently large constant such that items with this cost will never be packed in an optimal solution. Finally, items of type ($\gamma$) all have $\underline{c}_i = \overline{c}_i = \overline{d}_i = M$ and $\underline{d}_i=0$, see Table~\ref{tab:red2}. We complete the instance by setting $p=n+1$ and $\Gamma=1$.

\begin{table}[htb]
\begin{center}
\begin{tabular}{c|ccc|c|ccc}
\multicolumn{1}{c}{} & \multicolumn{3}{c}{$\alpha$} & \multicolumn{1}{c}{$\beta$} & \multicolumn{3}{c}{$\gamma$} \\[-1ex]
\multicolumn{1}{c}{} & \multicolumn{3}{c}{\downbracefill} & \multicolumn{1}{c}{\downbracefill} & \multicolumn{3}{c}{\downbracefill} \\[2ex]
$i$ & 1 & $\dots$ & $n$ & $n+1$ & $n+2$ & $\dots$ & $2n+1$ \\
\hline
$\underline{c}_i$ & $a_1$ & $\dots$ & $a_n$ & 0 & $M$ & $\dots$ & $M$ \\ 
$\overline{c}_i$ & $a_1$ & $\dots$ & $a_n$ & $2A$ & $M$ & $\dots$ & $M$ \\ 
$\underline{d}_i$ & $2a_1$ & $\dots$ & $2a_n$ & $M$ & 0 & $\dots$ & 0 \\
$\overline{d}_i$ & $2a_1$ & $\dots$ & $2a_n$ & $M$ & $M$ & $\dots$ & $M$ 
\end{tabular}
\caption{Instance used in the hardness reduction for two-stage \textsc{selection}.}\label{tab:red2}
\end{center}
\end{table}

We may assume that no first-stage solution packs any item of type ($\gamma$). Let us assume that the item of type ($\beta$) is not packed. Then the adversary will not increase any item costs in the first stage, but is able to give one of the items packed in the second stage the high cost $M$. Hence, any optimal solution will pack item $\beta$ already in the first stage. Let $X\subseteq[n]$ be the index set of items of type ($\alpha$) packed in the first stage. The adversary now has two possible strategies: Either to increase the costs of item $\beta$, or to save the budget for the second stage. In the first case we can pack items of type ($\gamma$) to reach a complete solution with no additional costs. In the second case we are forced to complete our solution by packing items of type ($\alpha$). The total costs are hence
\[ \sum_{i\in X} a_i + \max \left\{ 2A, \sum_{i\in \bar{X}} 2a_i \right\} =  2A  + \max\left\{ \sum_{i\in X} a_i,  \sum_{i\in \bar{X}} a_i \right\}. \]
We find that the \textsc{partition} problem is a yes-instance if and only if we can choose a solution $X$ to the two-stage \textsc{selection} problem with total costs less or equal to $3A$.

\end{proof}

\subsection{Special case of equal costs}

The results in Section~\ref{sec:hardness} show that most combinatorial optimization problems become 
hard to solve under two-stage discrete budgeted uncertainty. We now show that these results do not necessarily hold for the specific case that $\underline{\pmb{c}}=\underline{\pmb{d}}$ and $\overline{\pmb{c}} = \overline{\pmb{d}}$. That is, we consider the following problem: First the decision maker chooses a set of items and pays $\underline{c}_i$ for each such item. Then the adversary can choose up to $\Gamma$ many of these items and increase their costs to $\overline{c}_i$, thereby forcing the decision maker to pay the additional cost difference $\overline{c}_i - \underline{c}_i$ for all items whose costs were increased. This process is now repeated, where the decision maker chooses a second set of items with costs $\underline{c}_i$ to create a feasible solution, and the adversary can spend the remaining budget to increase item costs. This setting is hence similar to the classic setting of one-stage robust optimization insofar there are no different cost vectors between the first and the second stage, see Table~\ref{tab:ex} in Section~\ref{sec:intro} for an example of this setting.

Consider the case $\Gamma=1$ in problem~\eqref{pd7}. 
Note that $\kappa^{(2)}_1$ and $\pi^{(0)}$ are multiplied with zero in constraint~\eqref{16b}. We can therefore make them sufficiently large to fulfill constraints~\eqref{16e} for $\gamma=1$ and \eqref{16f} for $\gamma=0$, respectively. Hence, after
appropriately renaming variables, the problem formulation is equivalent to the following mixed-integer program:
\begin{align*}
\min\ & t \\
\text{s.t. } & t \ge \pi^{(0)} + \sum_{i\in[n]} \rho^{(0)}_i + \sum_{i\in[n]} \underline{c}_i y^{(0)}_i + \sum_{i\in[n]} \underline{c}_i x_i \\
 & t \ge \pi^{(1)} + \sum_{i\in[n]} \rho^{(1)}_i + \sum_{i\in[n]} \underline{c}_i y^{(1)}_i + \sum_{i\in[n]} \underline{c}_i x_i \\
& A(\pmb{x}+\pmb{y}^{(0)}) \ge \pmb{b}   \\
& A(\pmb{x}+\pmb{y}^{(1)}) \ge \pmb{b}   \\
& \pmb{x} + \pmb{y}^{(0)} \le \pmb{1} \\
& \pmb{x} + \pmb{y}^{(1)} \le \pmb{1} \\
& \pi^{(0)}+ \rho^{(0)}_i \ge (\overline{c}_i - \underline{c}_i)y^{(0)}_i & \forall  i\in[n] \\
& \pi^{(1)}+ \rho^{(1)}_i \ge (\overline{c}_i - \underline{c}_i)x_i & \forall  i\in[n] \\
& x_i \in\{0,1\} & \forall i\in[n] \\
& y^{(0)}_i,y^{(1)}_i \in \{0,1\} & \forall  i\in[n] \\
& \pi^{(0)}, \pi^{(1)} \ge 0  \\
& \rho^{(0)}_i, \rho^{(1)}_i \ge 0 & \forall  i\in[n]. 
\end{align*}
Note that the variables $\pi^{(0)}$ and $\rho^{(0)}_1, \dots, \rho^{(0)}_n$ appear only in two constraints (apart from the non-negativity constraints). An analogous observation holds for the variables $\pi^{(0)}$ and $\rho^{(0)}_1, \dots, \rho^{(0)}_n$. This fact together with the structure of those constraints implies that the problem has an optimal solution where $\pmb{\rho}^{(0)}=\pmb{\rho}^{(1)}=\pmb{0}$ and 
\begin{align*}
 \pi^{(0)} &=\max\{ \overline{c}_i - \underline{c}_i : i\in[n], y^{(0)}_i = 1\}\text{ and} \\
\pi^{(1)} &= \max\{ \overline{c}_i -\underline{c}_i : i\in[n], x_i =1\}.
\end{align*} 
Note that if $\rho^{(0)}_i > 0$, then one can decrease $\rho^{(0)}_i > 0$ and increase $\pi^ {(0)}$ by the same amount. This means that in an optimal solution, both $\pi^ {(0)}$ and $\pi^ {(1)}$ take one of the $O(n)$ values in the set $\{ \overline{c}_i - \underline{c}_i : i\in[n] \}$. We can therefore enumerate possible values for $\pi^{(0)}$ and $\pi^{(1)}$. Each subproblem is then as follows:
\begin{subequations}
\label{special2}
\begin{align}
\min\ & t \\
\text{s.t. } & t \ge \pi^{(0)} + \sum_{i\in[n]} \underline{c}_i y^{(0)}_i + \sum_{i\in[n]} \underline{c}_i x_i \\
 & t \ge \pi^{(1)} + \sum_{i\in[n]} \underline{c}_i y^{(1)}_i + \sum_{i\in[n]} \underline{c}_i x_i \\
& A(\pmb{x}+\pmb{y}^{(0)}) \ge \pmb{b}   \\
& A(\pmb{x}+\pmb{y}^{(1)}) \ge \pmb{b}   \\
& \pmb{x} + \pmb{y}^{(0)} \le \pmb{1} \\
& \pmb{x} + \pmb{y}^{(1)} \le \pmb{1} \\
& y^{(0)}_i = 0 & \forall i\in[n]: \overline{c}_i - \underline{c}_i > \pi^{(0)} \label{forbid1}\\
& x_i = 0 & \forall i\in[n]: \overline{c}_i - \underline{c}_i > \pi^{(1)} \label{forbid2}\\
& x_i \in\{0,1\} & \forall i\in[n] \\
& y^{(0)}_i,y^{(1)}_i \in \{0,1\} & \forall  i\in[n]. 
\end{align}
\end{subequations}
By constraints~\eqref{forbid1} and \eqref{forbid2}, some variables are forced to be zero. In the following, we refer to the corresponding variables as being ''forbidden''.

\begin{theorem}\label{th:special}
The two-stage \textsc{representative selection} problem with two-stage discrete budgeted uncertainty and with $|T_j|=2$, $\Gamma=1$, $\underline{\pmb{c}}=\underline{\pmb{d}}$ and $\overline{\pmb{c}} = \overline{\pmb{d}}$ can be solved in $O(n^3)$ time.
\end{theorem}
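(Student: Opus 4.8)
The plan is to build directly on the reformulation~\eqref{special2}, which was derived above for the case $\Gamma=1$, $\underline{\pmb c}=\underline{\pmb d}$, $\overline{\pmb c}=\overline{\pmb d}$. First I would fix the overall skeleton of the algorithm: enumerate the $O(n)$ candidate values of $\pi^{(0)}$ and the $O(n)$ candidate values of $\pi^{(1)}$ (each lying in $\{\overline c_i-\underline c_i:i\in[n]\}\cup\{0\}$, as justified above), giving $O(n^2)$ pairs, and for each pair solve the subproblem~\eqref{special2}. Taking the minimum of all these $O(n^2)$ subproblem values yields the robust optimum, so the claimed $O(n^3)$ bound follows as soon as each subproblem can be solved in $O(n)$ time.

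The second step is to simplify~\eqref{special2} for a fixed pair $(\pi^{(0)},\pi^{(1)})$. Writing $\pmb z^{(0)}=\pmb x+\pmb y^{(0)}$ and $\pmb z^{(1)}=\pmb x+\pmb y^{(1)}$, the two constraints on $t$ become $t\ge \pi^{(0)}+\underline{\pmb c}^t\pmb z^{(0)}$ and $t\ge \pi^{(1)}+\underline{\pmb c}^t\pmb z^{(1)}$, so the subproblem is exactly to minimize $\max\{\pi^{(0)}+\underline{\pmb c}^t\pmb z^{(0)},\ \pi^{(1)}+\underline{\pmb c}^t\pmb z^{(1)}\}$, where $\pmb z^{(0)},\pmb z^{(1)}$ are feasible representative selections sharing the common first-stage part $\pmb x$. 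The forbidden constraints~\eqref{forbid1} and~\eqref{forbid2} translate into the rules that item $i$ may enter $\pmb x$ only if $\overline c_i-\underline c_i\le\pi^{(1)}$ and may enter $\pmb y^{(0)}$ only if $\overline c_i-\underline c_i\le\pi^{(0)}$, while $\pmb y^{(1)}$ is unconstrained. Since the sets $T_j$ partition $[n]$, this problem decomposes across the $T_j$ up to the coupling inside each set caused by the shared variable $\pmb x$.

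The key step, which uses $|T_j|=2$ crucially, is to show that each set can be resolved independently and greedily, so that the outer $\max$ hides no trade-off. I would run a short case analysis on a set $T_j=\{u,v\}$ with $\underline c_u\le\underline c_v$. Because $\pmb y^{(1)}$ is never forbidden, $\pmb z^{(1)}$ can always take any item of $T_j$ through the second stage. If the cheaper item $u$ is selectable by $\pmb z^{(0)}$, i.e.\ $\overline c_u-\underline c_u\le\max\{\pi^{(0)},\pi^{(1)}\}$, then taking $u$ in both scenarios simultaneously minimizes the contribution of $T_j$ to both sums. If $u$ is not selectable by $\pmb z^{(0)}$ but $v$ is, then $\pmb z^{(0)}$ is forced onto $v$, and one checks that either $\pmb z^{(1)}$ may still take the cheaper item $u$ (precisely when $\overline c_v-\underline c_v\le\pi^{(0)}$, so that $v$ is routed through $\pmb y^{(0)}$ and $\pmb x$ stays empty on $T_j$), or $v$ can only be routed through $\pmb x$, which forces $\pmb z^{(1)}$ onto $v$ as well; in both situations the chosen configuration simultaneously minimizes the $T_j$-contribution to both $\underline{\pmb c}^t\pmb z^{(0)}$ and $\underline{\pmb c}^t\pmb z^{(1)}$. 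If neither item is selectable by $\pmb z^{(0)}$, the subproblem is infeasible for this pair and is skipped.

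Since in every case a single per-set configuration minimizes both set-contributions at once, the sums $\underline{\pmb c}^t\pmb z^{(0)}$ and $\underline{\pmb c}^t\pmb z^{(1)}$ attain their minima at the \emph{same} solution, so the inner $\max$ is minimized by these independent greedy choices, which cost $O(1)$ per set and hence $O(n)$ per pair $(\pi^{(0)},\pi^{(1)})$; multiplied by the $O(n^2)$ pairs this gives $O(n^3)$. The main obstacle is precisely the case analysis in the third step: one must verify that forcing $\pmb z^{(1)}$ through the shared variable $\pmb x$ can never induce a genuine trade-off between the two scenarios. This is exactly where $|T_j|=2$ enters, since with only two items per set $\pmb z^{(0)}$ has no third item available through which it could satisfy its own selection constraint while ``freeing up'' $\pmb z^{(1)}$ to take the globally cheaper representative.
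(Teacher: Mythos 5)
Your proposal is correct and follows essentially the same route as the paper: enumerate the $O(n^2)$ pairs of candidate values for $(\pi^{(0)},\pi^{(1)})$, and solve each resulting subproblem~\eqref{special2} in $O(n)$ time by a per-pair greedy case analysis (your cases, phrased via $\pmb z^{(s)}=\pmb x+\pmb y^{(s)}$ and simultaneous coordinate-wise minimality, match one-to-one the paper's rule ``take $x_1$ if allowed; else cheapest allowed $y^{(0)}$-option with $y^{(1)}_1$; else $x_2$; else infeasible'' and its accompanying case table). Your explicit justification that the per-pair choice minimizes both scenario contributions at once, and hence the max, is a slightly cleaner packaging of the argument the paper leaves largely to its table.
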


\begin{proof}
We show that problem~\eqref{special2} can be solved in $O(n)$ time for the \textsc{representative selection} problem with $|T_j|=2$, implying the claimed result. 
The objective of problem~\eqref{special2} is to minimize $\sum_{i\in[n]} \underline{c}_i x_i + \max\{\pi^{(0)} + \sum_{i\in[n]} \underline{c}_i y^{(0)}_i,  \pi^{(1)} + \sum_{i\in[n]} \underline{c}_i y^{(1)}_i\}$, while constraints~\eqref{forbid1} and \eqref{forbid2} imply that some choices in $\pmb{x}$ and $\pmb{y}^{(0)}$ are forbidden.

Let us denote as items 1 and 2 an arbitrary part where one of the two items need to be chosen. Let $\underline{c}_1 \le \underline{c}_2$. 
If it is possible to choose $x_1$, then this is not worse than any other choice. If $x_1$ is forbidden, then we take the cheapest option allowed between $y^{(0)}_1$ and $y^{(0)}_2$ in combination with $y^{(1)}_1$. Only if both choices in $y^{(0)}$ are forbidden, the only feasible choice is to pack $x_2$. If also $x_2$ is not allowed, then the problem is infeasible. Table~\ref{tab:special} lists all possible cases how item choices may be forbidden (marked with X) with an optimal solution for each case. To solve the problem, we hence only need to iterate through all items once.

\begin{table}[htb]
\begin{center}
\begin{tabular}{cccc}
\begin{tabular}{r|rrr}
$i$ & $x_i$ & $y^{(0)}_i$ & $y^{(1)}_i$ \\
 \hline
1 & 1 & 0 & 0 \\
2 & 0 & 0 & 0 
\end{tabular}
&
\begin{tabular}{r|rrr}
$i$ & $x_i$ & $y^{(0)}_i$ & $y^{(1)}_i$ \\
  \hline
1 & X & 1 & 1 \\
2 & 0 & 0 & 0 
\end{tabular}
&
\begin{tabular}{r|rrr}
$i$ & $x_i$ & $y^{(0)}_i$ & $y^{(1)}_i$ \\
  \hline
1 & 1 & 0 & 0 \\
2 & X & 0 & 0 
\end{tabular}
&
\begin{tabular}{r|rrr}
$i$ & $x_i$ & $y^{(0)}_i$ & $y^{(1)}_i$ \\
  \hline
1 & X & 1 & 1 \\
2 & X & 0 & 0 
\end{tabular}
\\[1cm]
\begin{tabular}{r|rrr}
$i$ & $x_i$ & $y^{(0)}_i$ & $y^{(1)}_i$ \\
  \hline
1 & 1 & X & 0 \\
2 & 0 & 0 & 0 
\end{tabular}
&
\begin{tabular}{r|rrr}
$i$ & $x_i$ & $y^{(0)}_i$ & $y^{(1)}_i$ \\
  \hline
1 & 1 & 0 & 0 \\
2 & 0 & X & 0 
\end{tabular}
&
\begin{tabular}{r|rrr}
$i$ & $x_i$ & $y^{(0)}_i$ & $y^{(1)}_i$ \\
  \hline
1 & 1 & X & 0 \\
2 & 0 & X & 0 
\end{tabular}
&
\begin{tabular}{r|rrr}
$i$ & $x_i$ & $y^{(0)}_i$ & $y^{(1)}_i$ \\
  \hline
1 & X & X & 1 \\
2 & 0 & 1 & 0 
\end{tabular}
\\[1cm]
\begin{tabular}{r|rrr}
$i$ & $x_i$ & $y^{(0)}_i$ & $y^{(1)}_i$ \\
  \hline
1 & X & X & 0 \\
2 & 1 & X & 0 
\end{tabular}
&
\begin{tabular}{r|rrr}
$i$ & $x_i$ & $y^{(0)}_i$ & $y^{(1)}_i$ \\
  \hline
1 & X & X & 1 \\
2 & X & 1 & 0 
\end{tabular}
&
\begin{tabular}{r|rrr}
$i$ & $x_i$ & $y^{(0)}_i$ & $y^{(1)}_i$ \\
  \hline
1 & 1 & 0 & 0 \\
2 & X & X & 0 
\end{tabular}
&
\begin{tabular}{r|rrr}
$i$ & $x_i$ & $y^{(0)}_i$ & $y^{(1)}_i$ \\
  \hline
1 & X & 1 & 1 \\
2 & X & X & 0 
\end{tabular}
\\[1cm]
\begin{tabular}{r|rrr}
$i$ & $x_i$ & $y^{(0)}_i$ & $y^{(1)}_i$ \\
  \hline
1 & 1 & X & 0 \\
2 & X & X & 0 
\end{tabular}
&
\begin{tabular}{r|rrr}
$i$ & $x_i$ & $y^{(0)}_i$ & $y^{(1)}_i$ \\
  \hline
1 & X & X & 0 \\
2 & X & X & 0 
\end{tabular}
&
&
\end{tabular}
\end{center}
\caption{Proof of Theorem~\ref{th:special}: Possible item choices.}\label{tab:special}
\end{table}

\end{proof}

We show that it is also possible to show a similar result for \textsc{selection}.

\begin{theorem}\label{th:special2}
The two-stage \textsc{selection} problem with two-stage discrete budgeted uncertainty and with $\Gamma=1$, $\underline{\pmb{c}}=\underline{\pmb{d}}$ and $\overline{\pmb{c}} = \overline{\pmb{d}}$ can be solved in $O(n^5)$ time.
\end{theorem}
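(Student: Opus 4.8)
The starting point is the mixed-integer program~\eqref{special2}: as already established, after enumerating the $O(n)$ candidate values for each of $\pi^{(0)}$ and $\pi^{(1)}$ we are left with $O(n^2)$ instances of~\eqref{special2}, so the plan is to show that for \textsc{selection} each such instance can be solved in $O(n^3)$ time, which yields the claimed $O(n^5)$ bound. First I would rewrite~\eqref{special2} purely in terms of the two selections $Z^{(0)} := \{i : x_i = 1 \text{ or } y^{(0)}_i = 1\}$ and $Z^{(1)} := \{i : x_i = 1 \text{ or } y^{(1)}_i = 1\}$. Since for \textsc{selection} every $p$-element set is feasible, the constraints reduce to $|Z^{(0)}| = |Z^{(1)}| = p$; because $x_i$ and $y^{(\gamma)}_i$ never share an index and carry the same cost $\underline{c}_i$, the objective becomes $\max\{\pi^{(0)} + \sum_{i \in Z^{(0)}} \underline{c}_i,\ \pi^{(1)} + \sum_{i \in Z^{(1)}} \underline{c}_i\}$. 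Finally, the forbidding constraints~\eqref{forbid1}--\eqref{forbid2} collapse into a single coupling condition: any $i \in Z^{(0)}$ with $\overline{c}_i - \underline{c}_i > \pi^{(0)}$ cannot be bought in the second stage, so it must be a first-stage item, which forces $\overline{c}_i - \underline{c}_i \le \pi^{(1)}$ and forces $i \in Z^{(1)}$. One checks that, conversely, any pair $(Z^{(0)},Z^{(1)})$ satisfying this condition admits a valid split (take the first-stage set to be exactly the forced items).

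Next I would exploit that, without the coupling condition, the two branches decouple completely: one would simply let $Z^{(1)}$ be the $p$ cheapest items and $Z^{(0)}$ the $p$ cheapest items that are not forbidden for $y^{(0)}$. The coupling only forces the set $W := \{i \in Z^{(0)} : \overline{c}_i - \underline{c}_i > \pi^{(0)}\}$ of ``first-stage-only'' items into both selections, where necessarily $W \subseteq \{i : \pi^{(0)} < \overline{c}_i - \underline{c}_i \le \pi^{(1)}\}$. I would therefore enumerate $w := |W|$ over its $O(n)$ possible values. For a fixed $w$, the cost of branch $0$ equals $\pi^{(0)} + \sum_{i\in W}\underline{c}_i$ plus the cost of the $p-w$ cheapest admissible items completing $Z^{(0)}$, and hence depends on $W$ only through $\sum_{i\in W}\underline{c}_i$; the cost of branch $1$ equals $\pi^{(1)} + \sum_{i\in W}\underline{c}_i$ plus the cost of the $p-w$ cheapest items outside $W$, and therefore genuinely depends on which items constitute $W$.

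The main obstacle is exactly this min-max coupling through $W$: minimizing the two branch costs separately does not minimize their maximum, since branch $0$ prefers $W$ to be the $w$ cheapest eligible items while branch $1$ prefers $W$ to overlap the globally cheapest $p$-set. I would resolve it, for each fixed $w$, by a case split on which branch attains the maximum. If branch $0$ is binding, its value is minimized by taking $W$ to be the $w$ cheapest eligible items, and no admissible $W$ can do better. If branch $1$ is binding, its value is minimized by choosing $W$ so that $Z^{(1)}$ is the cheapest $p$-set containing at least $w$ eligible items. The remaining balanced case, in which both branch costs coincide at the optimum, is the delicate one and would be handled by scanning the trade-off obtained as eligible items are swapped into and out of $W$ in order of cost. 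Sorting the items once and then evaluating each of the $O(n)$ breakpoints in $O(n)$ time solves the fixed-$w$ problem in $O(n^2)$; together with the $O(n)$ choices of $w$ this solves each instance of~\eqref{special2} in $O(n^3)$, and combined with the $O(n^2)$ outer enumeration of $(\pi^{(0)},\pi^{(1)})$ gives the overall $O(n^5)$ running time.
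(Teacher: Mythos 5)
Your reduction of problem~\eqref{special2} to choosing two $p$-element sets $Z^{(0)},Z^{(1)}$, coupled only through the set $W=\{i\in Z^{(0)} : \overline{c}_i-\underline{c}_i>\pi^{(0)}\}\subseteq\{i : \pi^{(0)}<\overline{c}_i-\underline{c}_i\le\pi^{(1)}\}$ which must also lie in $Z^{(1)}$, is correct, and it is a genuinely different (and arguably cleaner) packaging than the paper's proof: the paper instead classifies items into types $\mathcal{A},\mathcal{B},\mathcal{C},\mathcal{D}$ according to which of $\pmb{x},\pmb{y}^{(0)},\pmb{y}^{(1)}$ may carry them, splits into the cases $\pi^{(0)}=\pi^{(1)}$, $\pi^{(0)}<\pi^{(1)}$, $\pi^{(0)}>\pi^{(1)}$, and shows by exchange arguments that each type is packed greedily in order of $\underline{c}_i$, after which cardinalities are guessed. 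Your enumeration of $w=|W|$ plays the role of the paper's cardinality guess for type $\mathcal{C}$, and your observation that the branches decouple when $\pi^{(0)}\ge\pi^{(1)}$ correctly disposes of two of the paper's three cases.

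The genuine gap is in the fixed-$w$ subproblem, which is the heart of the matter. Writing $B_0(W)$ and $B_1(W)$ for the two branch costs, your case analysis ``guess the binding branch and minimize it'' is logically insufficient: if branch $0$ is binding at an optimal $W^*$, then taking $W_a$ equal to the $w$ cheapest eligible items gives only $B_0(W_a)\le B_0(W^*)=\mathrm{OPT}$; it gives no bound on $B_1(W_a)$, so you cannot conclude $\max\{B_0(W_a),B_1(W_a)\}\le \mathrm{OPT}$, and symmetrically for branch $1$. You acknowledge this by introducing a ``balanced case,'' but that case is exactly where the proof must do work, and ``scanning the trade-off obtained as eligible items are swapped into and out of $W$'' is not an argument: no candidate set is specified and nothing shows the optimum is among the scanned candidates. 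The missing lemma, which closes the gap and in fact makes the whole case split unnecessary, is that $W_a$ \emph{simultaneously} minimizes both branches. For $B_0$ this is immediate; for $B_1$ use the exchange argument: if $j\in W$ and $i\notin W$ are eligible with $\underline{c}_i\le\underline{c}_j$, and $Z\supseteq W$ is a cheapest $p$-set containing $W$, then either $i\in Z$, in which case $Z$ already contains $(W\setminus\{j\})\cup\{i\}$, or $(Z\setminus\{j\})\cup\{i\}$ is a no-more-expensive $p$-set containing $(W\setminus\{j\})\cup\{i\}$; either way the swap does not increase $B_1$. Hence $\mathrm{OPT}\ge\max\{\min_W B_0(W),\min_W B_1(W)\}=\max\{B_0(W_a),B_1(W_a)\}\ge\mathrm{OPT}$, so evaluating $W_a$ alone solves the fixed-$w$ problem (your perceived tension between the two branches' preferences for $W$ is in fact vacuous). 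This exchange lemma is precisely the content of the paper's claims that items of each type can be packed ``in sorted order''; without it, your proof does not go through, and with it your algorithm simplifies and even runs in $O(n^4)$ overall rather than $O(n^5)$.
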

\begin{proof}
We show that problem~\eqref{special2} can be solved in $O(n^3)$ time for the \textsc{selection} problem, implying the claimed result. 

We sort all items such that $\underline{c}_1 \le \underline{c}_2 \le \ldots \le\underline{c}_n$. We define different types of items:
\begin{enumerate}
\item Items $\mathcal{A}= \{i\in[n] : \overline{c}_i-\underline{c}_i > \pi^{(0)},\ \overline{c}_i-\underline{c}_i > \pi^{(1)}\}$, i.e., items that are forbidden for both $\pmb{y}^{(0)}$ and $\pmb{x}$. 

\item Items $\mathcal{B} = \{i\in[n] : \overline{c}_i-\underline{c}_i \le \pi^{(0)},\ \overline{c}_i-\underline{c}_i \le \pi^{(1)}\}$, i.e., items that can be packed freely.

\item Items $\mathcal{C} = \{i\in[n] : \overline{c}_i-\underline{c}_i > \pi^{(0)},\ \overline{c}_i-\underline{c}_i \le \pi^{(1)}\}$, i.e., items that are only forbidden for $\pmb{y}^{(0)}$. Such items only exist if $\pi^{(0)} < \pi^{(1)}$.

\item Items $\mathcal{D} = \{i\in[n] : \overline{c}_i-\underline{c}_i \le \pi^{(0)},\ \overline{c}_i-\underline{c}_i > \pi^{(1)}\}$, i.e., items that are only forbidden for $\pmb{x}$. Such items only exist if $\pi^{(0)} > \pi^{(1)}$.
\end{enumerate}
Note that each item $i\in[n]$ is of exactly one of these types. We now distinguish whether $\pi^{(0)}=\pi^{(1)}$, $\pi^{(0)}<\pi^{(1)}$, or $\pi^{(0)}>\pi^{(1)}$ holds.
\begin{enumerate}
\item Let us assume that $\pi^{(0)}=\pi^{(1)}$, i.e., there are only item types $\mathcal{A}$ and $\mathcal{B}$. Note that we can assume items in $\mathcal{A}$ are always packed using $\pmb{y}^{(1)}$ in sorted order. We claim that there exists an optimal solution where $y^{(1)}_i = 0$ for all items $i\in\mathcal{B}$. To see this, let us assume that $y^{(1)}_i=1$ for some $i\in\mathcal{B}$. Then there exist some $j\in\mathcal{B}$ with $y^{(0)}_j = 1$. Hence, setting either $x_i=1$ or $x_j=1$ in combination with $y^{(1)}_i=0$ and $y^{(0)}_j = 0$ does not give a worse objective value. We furthermore claim that items of type $\mathcal{B}$ are always packed using first $x_i=1$ in sorted order, and then $y^{(0)}_i=1$ in sorted order (i.e., packing with $\pmb{x}$ has preference). Let us assume that this is not the case and there exist $i<j$ with $y^{(0)}_i=1$ and $x_j=1$. Then using $y^{(0)}_j=1$ and $x_i=1$ instead gives the same objective value with respect to $\pmb{x}+\pmb{y}^{(0)}$, but an objective value that is not worse with respect to $\pmb{x}+\pmb{y}^{(1)}$.

In summary, we only need to guess the cardinality of $|\mathcal{A}|$. For each such value, an optimal solution can be constructed in $O(n)$ time, giving a total time of $O(n^{2})$.

\item Let us assume that $\pi^{(0)} < \pi^{(1)}$, i.e., there are only items of types $\mathcal{A}$, $\mathcal{B}$ and $\mathcal{C}$. Using similar arguments as in the previous case, we find that items in $\mathcal{A}$ are packed in order using $\pmb{y}^{(1)}$, items in $\mathcal{B}$ are packed in order using first $\pmb{x}$ and then $\pmb{y}^{(0)}$, and items in $\mathcal{C}$ are packed in order using only $\pmb{x}$. Guessing the cardinality of items packed in $\mathcal{A}$ and $\mathcal{C}$, we can construct an optimal solution in $O(n^3)$ time.

\item Let us assume that  $\pi^{(0)} > \pi^{(1)}$, i.e., there are only items of types $\mathcal{A}$, $\mathcal{B}$ and $\mathcal{D}$. Using similar arguments as in the first case, items in $\mathcal{A}$ are packed in order using $\pmb{y}^{(1)}$, items in $\mathcal{B}$ are packed in order using first $\pmb{x}$ and then $\pmb{y}^{(0)}$. Items in $\mathcal{D}$ can be packed by both $\pmb{y}^{(0)}$ and $\pmb{y}^{(1)}$, and can be assumed to be packed in order by each vector. Let us guess $\sum_{i\in[n]} x_i = \sum_{i\in\mathcal{B}} x_i$. We then construct an optimal $\pmb{y}^{(0)}$ solution by packing remaining items in $\mathcal{B}$ and $\mathcal{D}$ in order, and construct an optimal $\pmb{y}^{(1)}$ solution by packing remaining items in $\mathcal{A}$ and $\mathcal{D}$ in order. Hence, this setting can be solved in $O(n^{2})$ time.
\end{enumerate}

We can conclude that problem~\eqref{special2} can be solved in $O(n^3)$ time as claimed.
\end{proof}

\section{Experiments}
\label{sec:experiments}

In the previous sections we analyzed the complexity of our two-stage approach, identifying easy and hard problem cases. The purpose of this section is to assess the benefit of our approach in comparison to classic one-stage robust optimization, i.e., what is the advantage of an additional stage that allows the decision maker to observe the costs of the items chosen in the first stage before completing the solution? To this end, we perform computational experiments which compare the objective value of both settings.

\subsection{Setup}

We focus on instances of the \textsc{selection} problem, for which we need to determine the number of items $n$, the number of items that must be selected $p$, the uncertainty parameter $\Gamma$, as well as lower and upper bounds on first- and second-stage costs. For these experiments, we choose a fixed size $n=20$ and vary both $p$ and $\Gamma$.

To generate lower and upper bounds on item costs, we uniformly sample three random integers in $\{1,\ldots,100\}$ per item $i$. Let $v^{(1)} \le v^{(2)} \le v^{(3)}$ be these values after sorting them. We then set $\underline{c}_i = \underline{d}_i = v^{(1)}$, $\overline{c}_i = v^{(2)}$, and $\overline{d}_i = v^{(3)}$. This setting reflects a higher degree of uncertainty for decisions that lie in the future. We generate 50 instances this way. When changing parameters $p$ and $\Gamma$, the same set of instances is solved for better comparability.

Each instance is solved with two methods. The first reflects the one-stage setting, where a complete solution must be bought in one step, i.e., a static policy is used which may not depend on the reaction of the adversary. Formally, this corresponds to the problem
\[ \min_{\pmb{x}\in\X} \min_{\pmb{y}\in\X(\pmb{x})} \max_{\pmb{c}\in\cU} \max_{\pmb{d}\in\cU(\pmb{c})} \pmb{c}^t\pmb{x} + \pmb{d}^t \pmb{y}. \]
In our case, by dualizing the inner adversarial problem with budgeted uncertainty, this is equivalent to the following mixed-integer program:
\begin{align*}
\min\ & \sum_{i\in[n]} (\underline{c}_i x_i + \underline{d}_i y_i) + \Gamma\pi + \sum_{i\in[n]} \rho_i \\
\text{s.t. } & \sum_{i\in[n]} (x_i + y_i) \ge p \\
& \pi + \rho_i \ge (\overline{c}_i-\underline{c}_i)x_i + (\overline{d}_i - \underline{d}_i)y_i & \forall i\in[n] \\
& x_i + y_i \le 1 & \forall i\in[n] \\
& x_i, y_i \in\{0,1\} & \forall i\in[n]
\end{align*}
The second method is our two-stage problem \textsc{Rob} with discrete budgeted uncertainty, modeled through the compact formulation~\eqref{pd7}. Note that the optimal objective value resulting from \textsc{Rob} with discrete budgeted uncertainty $R_{2D}$ is less or equal to the objective value of \textsc{Rob} with continuous budgeted uncertainty $R_{2C}$, which is in turn less or equal to the objective value of the one-stage approach $R_1$. Hence, the ratio $gap = R_1 / R_{2D} -1$ is an upper bound to the corresponding ratio $ R_1 / R_{2C} -1$. We use this definition of $gap$ to measure the difference between the one-stage and two-stage approaches.

All instances are solved with CPLEX 12.8. on a virtual server with Intel Xeon Gold 5220 CPU running at 2.20GHz. Each process is single-threaded. We use a time limit of 300 seconds per instance. As the computation times for solving the one-stage approach are small ($<0.1$ seconds across all instances), we provide CPLEX with the one-stage solution as a warmstart when solving the two-stage model.

\subsection{Results}

We show average values of $gap$ over the 50 test instances using different values for $p$ and $\Gamma$ in Figure~\ref{fig:gap} and median computation times of the two-stage model in Figure~\ref{fig:times}.

\begin{figure}[p]
\begin{center}
\includegraphics[width=0.6\textwidth]{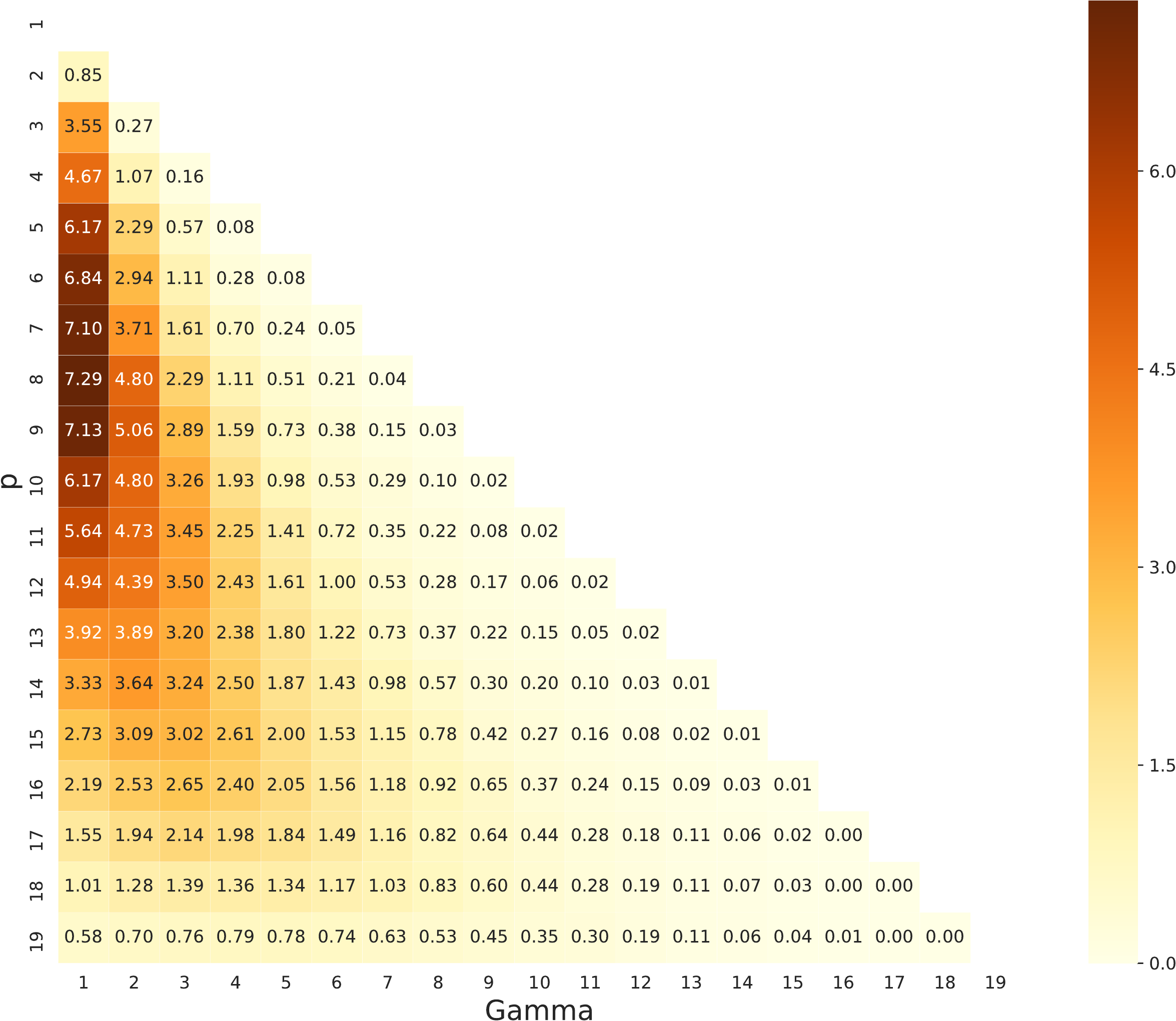}
\end{center}
\caption{Average values of $gap$.}\label{fig:gap}
\end{figure}

\begin{figure}[p]
\begin{center}
\includegraphics[width=0.6\textwidth]{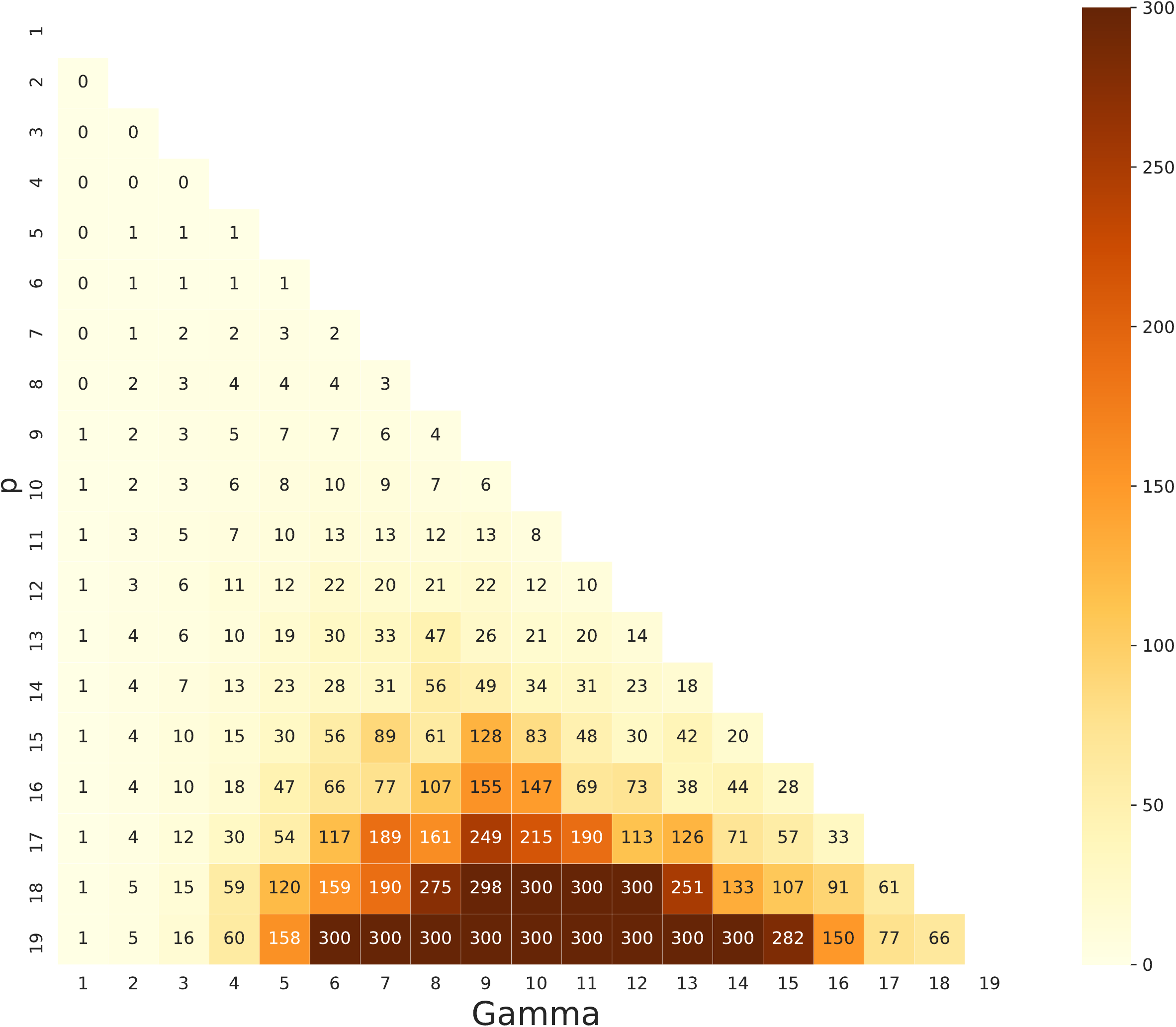}
\end{center}
\caption{Median computation times for the two-stage approach.}\label{fig:times}
\end{figure}

We first consider average $gap$ values. In this figure, darker colors indicate higher average $gap$ values and thus a larger benefit of using the two-stage model instead of the one-stage model. All values in Figure~\ref{fig:gap} are given in percent. Combinations with $p=1$, $p=20$, or $p=\Gamma$ are not shown, because in these cases we know a priori that there is no gap.

It can be seen that the average value of $gap$ depends on the combination of $p$ and $\Gamma$ parameters. For $p=8$ and $\Gamma=1$, a maximum value of $7.29\%$ is reached. Further relatively high values are in the region of small $\Gamma$ and value of $p$ up to around 14. To provide an intuition why the two-stage setting is particularly beneficial in this region, consider $\Gamma=1$. If the adversary decides to spend this budget on the first-stage solution, then the decision maker can choose items according to their nominal costs in the second stage, i.e., completely ignore uncertainty. In other words, the information reveal by the adversary is particularly large. For increasing values of $p$, the additional costs caused by the adversary become less significant in comparison to the total costs of the solution, and the $gap$ value becomes smaller again. Instead, slightly higher values of $\Gamma$ give slightly higher $gap$ values. In comparison, if $\Gamma$ is close to $p$, then the adversary can attack nearly all items. Accordingly, the information which items are attacked becomes less valuable to the decision maker.

We now consider median computation times as presented in Figure~\ref{fig:times}, where we rounded values to the next integer second. Recall that a time limit of 300 seconds was used. In the nominal \textsc{selection} problem, there are ${n}\choose{p}$ many feasible solutions, which may give the impression that problems with $p\approx n/2$ are the hardest to solve. This is not the case; instead, we note that the larger the value of $p$, the higher the computation times, in particular in the region where $\Gamma \approx p/2$. A possible explanation for this behavior is that in the two-stage setting, we also must make a decision when to buy an item (whether in the first stage or in the second stage). Hence, with more items that we need to select, more such decisions arise.

On the one hand, the relatively high median computation times for large values of $p$ (recall that each one-stage problem requires less than $0.1$ seconds to solve) show a disadvantage of the two-stage setting, which is the increased problem complexity. On the other hand, in combination with Figure~\ref{fig:gap} we note that the hardest problems are also those where the two-stage setting is least relevant. That is, problems with large parameter $p$ may be hard to solve, but in such cases the two-stage setting would not be useful in the first place. In the parameter range where the two-stage setting is most beneficial, computation times remain small, which indicates that the approach may also be useful in practice.

\section{Conclusions}
\label{sec:conclusions}

In this paper we extended the notion of two-stage robust problems by introducing two-stage uncertainty, where the adversary has the opportunity to react to the second stage of the decision maker. In particular, classic two-stage problems of the form
\[ \min_{\pmb{x}\in\X} \max_{\pmb{c}\in\cU} \min_{\pmb{y}\in\X(\pmb{x})} \pmb{C}^t\pmb{x} + \pmb{c}^t\pmb{y} \]
imply that the adversary needs to make a forward-facing decision that affects item choices not yet made, whereas in our setting, we consider a more natural extension of the one-stage problem
\[ \min_{\pmb{x}\in\X} \max_{\pmb{c}\in\cU} \pmb{c}^t\pmb{x} \]
where the adversary remains backward-facing, i.e., affects a decision already fixed. Analyzing two-stage continuous budgeted uncertainty sets, we show that a similar decomposition as in the classic paper \cite{bertsimas2003robust} is possible, resulting in a range of combinatorial optimization problems that remain solvable in polynomial time. This result does not hold for two-stage discrete budgeted sets, where already simple combinatorial optimization problems become NP-hard.

This paper makes a first sortie into a new type of robust optimization problems, where many further interesting problems remain to be considered. An open question is whether problem~\eqref{p8} can be solved in polynomial time for the case of spanning tree. Furthermore, the special case where $\underline{\pmb{c}}=\underline{\pmb{d}}$ and $\overline{\pmb{c}}=\overline{\pmb{d}}$ can still be solved for some problems with $\Gamma=1$.
It is an open problem if this remains possible for higher values of $\Gamma$.
Also note that the hardness proofs for two-stage discrete budgeted uncertainty do not give consequences on inapproximability; it is therefore interesting to consider if approximation algorithms are possible. Finally, other uncertainty sets can be considered. Recall that one-stage problem is NP-hard for all relevant combinatorial optimization problems with discrete uncertainty $\cU_D = \{\pmb{c}^{(1)}, \dots, \pmb{c}^{(N)}\}$.
We conjecture that the two-stage setting proposed here with a variant of two-stage discrete uncertainty moves up one level in the complexity hierarchy and problems become $\Sigma^p_2$-hard in many cases. 

\subsubsection*{Acknowledgements.}
Lasse Wulf acknowledges the support of the Austrian Science Fund (FWF): W1230. We thank the reviewers for their constructive feedback that further improved the paper quality.

\newcommand{\etalchar}[1]{$^{#1}$}

\appendix

\end{document}